\documentclass[12pt]{article}
\usepackage[british]{babel}
\usepackage{amsmath}
\usepackage{amsthm}
\usepackage{amsfonts}
\usepackage{amssymb}
\usepackage{fancyhdr}
\usepackage[all]{xy}           
\usepackage{graphicx}
\usepackage[left=3cm, right=3cm]{geometry}
\geometry{}
\usepackage{enumerate}
\usepackage{cite}
\usepackage{hyperref} 


\theoremstyle{plain}
\newtheorem{thm}{Theorem}[section]
\newtheorem{cor}[thm]{Corollary}
\newtheorem{lem}[thm]{Lemma}
\newtheorem{pro}[thm]{Proposition}

\theoremstyle{definition}

\newtheorem{defi}{Definition}

\theoremstyle{remark}
\newtheorem{remark}[thm]{Remark}

\def\dim{\text{\rm dim}}

\author{Tsiu-Kwen Lee$^\flat$ and Jheng-Huei Lin$^\natural$}

\title{Prime rings having nontrivial centralizers of (skew) traces of Lie ideals}
\date{}

\begin{document}

\maketitle

\centerline {Department of Mathematics, National Taiwan
University${^\flat,}$${^\natural}$}

\centerline {Taipei, Taiwan}

\centerline {tklee@math.ntu.edu.tw$^\flat$; r01221012@ntu.edu.tw${^\natural}$}

\begin{abstract}\vskip6pt
\noindent Let $R$ be a prime ring with center $Z(R)$ and with involution $*$.  Given an additive subgroup $A$ of $R$, let
$
T(A):=\{x+x^*\mid x\in A\}
$
and
$
K_0(A):=\{x-x^*\mid x\in A\}.
$
 Let $L$ be a non-abelian Lie ideal of $R$. It is proved that if $d$ is a nonzero derivation of $R$ satisfying $d(T(L))=0$ (resp. $d(K_0(L))=0$), then $T(R)^2\subseteq Z(R)$ (resp. $K_0(R)^2\subseteq Z(R)$).
These results are applied to the study of $d(T(M))=0$ and $d(K_0(M))=0$ for
  noncentral $*$-subrings $M$ of a division ring $R$ such that $M$ is invariant under all inner automorphisms of $R$, and for
noncentral additive subgroups $M$ of a prime ring $R$ containing a nontrivial idempotent such that $M$ is invariant under all special inner automorphisms of $R$.
The obtained theorems also generalize some recent results on simple artinian rings with involution due to M. Chacron.
 \end{abstract}

{ \hfill\break \noindent 2020 {\it Mathematics Subject Classification.}\ 16N60, 16W10, 16K40. \vskip6pt

\noindent {\it Key words and phrases:}\ Prime ring, division ring, involution, derivation, transpose, symplectic, (skew) trace, Lie ideal, centralizer.\ \vskip6pt


\section{Introduction}
In several branches of mathematics, science and technology, it is always interesting to study how local behavior of a given structure affects the entire one.
On the topic of rings with involution, a lot of researchers have investigated the connection between properties on the whole ring and behavior of a specific subset relating to the involution.
For instance, the study on subsets of the set comprising all symmetric (or skew) elements is very popular.
On such type of subsets, Amitsur et al. considered polynomial identities (see \cite{amitsur1968, baxter1968,herstein1967,martindale1969,montgomery1971a}), Osborn discussed invertibility (see \cite{osborn1967}), and Chacron, Herstein and Montgomery investigated some algebraic conditions such as Jacobson condition of periodicity  (see \cite{chacron1975,herstein1974,herstein1971,montgomery1971b,montgomery1973}).
Inspired by Amitsur \cite{amitsur1968}, similar studies also appear in the theory of group algebras.
See \cite{balogh2012,balogh2008,catino2014,lee2010,lee2015,shalev1992}.
On the other hand, in the theory of functional identities on prime rings, there is a very important theorem relating to this topic, given by Beidar and Martindale, saying that the standard type of functional identities with involution can be completely solved by the standard solution if the maximal degree of all symmetric and skew elements is large enough (see \cite[Theorem 3.1]{beidar1998}).
During the first quarter of the century, there are still several researchers studying different topics on subsets of the set comprising all symmetric (or skew) elements.
See, for example, \cite{bien2019,ferreira2015,goodaire2013,lin2010,mosic2009,siciliano2011,thu2022}.
In 2022, Bien, Hai and Hue investigated how the commutativity of trace or norm elements of units affects the whole ring and the involution (see \cite{bien2022}).
In the recent papers \cite{chacron2022, chacron2023, chacron2024}, Chacron continued the work and discussed the effects of properties, such as commutativity, of trace or skew trace elements of some specific substructures.

Throughout this paper, let $R$ be an associative ring with center $Z(R)$, not necessarily with unity. For $a, b\in R$, let $[a, b]:=ab-ba$, the additive commutator of $a$ and $b$.
For additive subgroups $A, B$ of $R$, we let $AB$ (resp. $[A, B]$) stand for the additive subgroup of $R$ generated by all elements $ab$ (resp. $[a, b]$) for $a\in A$ and $b\in B$. An additive subgroup $L$ is said to be a {\it Lie ideal} of $R$ if $[L, R]\subseteq L$. A Lie ideal is called {\it abelian} if $[L, L]=0$. Otherwise, it is called {\it non-abelian}.
Finally, let $A$ be a subset of $R$, we denote by $\overline A$ the subring of $R$ generated by $A$.

Let $R$ be equipped with an involution $*$, that is, $*$ is an anti-automorphism of $R$ with period $\leq 2$.
An element $x\in R$ is called {\it symmetric} (resp. {\it skew}) if $x^*=x$ (resp. $x^*=-x$).
The involutions on a primitive ring, not a division ring, with nonzero socle are completely determined (see  \cite[Theorem 1]{montgomery1976} and \cite[Theorem 1.2.2]{herstein1976}).
Depending on \cite[Theorem 1.2.2]{herstein1976}, we have the following.\vskip6pt

\begin{defi}\label{def1}
  Let $R$ be a primitive ring, not a division ring, with nonzero socle. An involution $*$ on $R$ is said to be of the {\it transpose} type if $R$ has a symmetric minimal idempotent, that is, there exists a minimal idempotent $e\in R$ such that $e^*=e$. Otherwise, the involution $*$ is said to be of the {\it symplectic} type.
\end{defi}

The following is well-known (see \cite[Theorem 1]{montgomery1976} and \cite[Corollary, p.19]{herstein1976}).

\begin{thm}\label{thm5}
Let $R:=\text{\rm M}_n(D)$ be equipped with an involution $*$, where $n>1$ and $D$ is a division ring.

Case 1:\ The involution $*$ is of the symplectic type. Then $n=2m$ and $D$ is a field.
Let $(A_{ij})\in R=\text{\rm M}_m(\text{\rm M}_2(D))$, where $A_{ij}\in \text{\rm M}_2(D)$, $1\leq i, j\leq m$. We have
$(A_{ij})^*=(B_{ij})$,
where $B_{ij}=A_{ji}^\sigma$ for  $1\leq i, j\leq m$. Here
$$\left[
\begin{array}{cc}
\alpha & \beta \\
\gamma & \delta%
\end{array}%
\right]^\sigma=\left[
\begin{array}{cc}
\delta &- \beta \\
-\gamma & \alpha%
\end{array}%
\right]\in \text{\rm M}_2(D).
$$

Case 2:\ The involution $*$ is of the transpose type. Then there exist an involution $-$ on $D$ and nonzero $\pi_i\in D$, $1\leq i\leq n$, such that
$\bar{\pi_i}=\pi_i$ for all $i$, and
$$
(a_{ij})^*=\text{\rm Diag}(\pi_1^{-1},\ldots,\pi_n^{-1})(\bar{a_{ij}})^t\text{\rm Diag}(\pi_1,\ldots,\pi_n)\in R.
$$
\end{thm}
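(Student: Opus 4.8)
The plan is to go through the classical correspondence between involutions on a matrix ring and sesquilinear forms. First I would write $R=\text{\rm M}_n(D)=\mathrm{End}_D(V)$ with $V$ an $n$-dimensional right $D$-vector space, $n>1$, and recall the basic structural fact (see, e.g., \cite{herstein1976}) that every anti-automorphism $*$ of $\mathrm{End}_D(V)$ is the adjoint with respect to a nondegenerate sesquilinear form: there exist an anti-automorphism $\theta$ of $D$ and a biadditive map $h\colon V\times V\to D$ with $h(vc,w)=\theta(c)h(v,w)$, $h(v,wc)=h(v,w)c$, and $h(f^*v,w)=h(v,fw)$ for all $f\in R$ and $v,w\in V$, and $h$ is nondegenerate and unique up to a scalar. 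To produce $h$ I would realize $V$ as a simple right $R$-module in two ways -- once via the dual space $\mathrm{Hom}_D(V,D)$, once by twisting the natural left $R$-action through $*$ -- and use Schur's lemma together with the double centralizer theorem to match them; reading off this isomorphism yields $h$ and $\theta$. Then, using $*^2=\mathrm{id}$, I would compare $h$ with the form $(v,w)\mapsto h(w,v)$, which has the same adjoint and hence is a scalar multiple of $h$, to conclude $\theta^2=\mathrm{id}$ and $h(w,v)=\epsilon\,\theta(h(v,w))$ for some $\epsilon$ in the center of $D$ with $\epsilon^2=1$; thus $\epsilon=\pm1$ and $h$ is $\epsilon$-hermitian.

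Next I would translate Definition \ref{def1}. A minimal idempotent of $R$ is exactly the projection of $V$ onto a line $vD$ along a complement, and such a projection equals its own $*$ precisely when its kernel is forced to be $(vD)^\perp$, i.e.\ when $vD\cap(vD)^\perp=0$, i.e.\ when $h(v,v)\neq 0$. Hence $*$ is of the transpose type iff $h$ is not alternating, and of the symplectic type iff $h(v,v)=0$ for all $v\in V$.

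In the transpose case I would run Gram--Schmidt, which succeeds for a non-alternating $\epsilon$-hermitian form over a division ring, to obtain an orthogonal basis $e_1,\dots,e_n$ of $V$, and set $\pi_i:=h(e_i,e_i)\in D^\times$, so that the $\epsilon$-hermitian identity gives $\theta(\pi_i)=\epsilon^{-1}\pi_i$. If $\epsilon=-1$ (which forces $\theta\neq\mathrm{id}$ and $\mathrm{char}\,D\neq 2$) I would choose $\delta\in D^\times$ with $\theta(\delta)=-\delta$ and replace $h$ by $\delta h$ and $\theta$ by $x\mapsto\delta\theta(x)\delta^{-1}$; this changes neither the adjoint $*$ nor the diagonality of the Gram matrix, and now the form is hermitian with $\theta$-symmetric diagonal entries. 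Writing $\overline{x}=\theta(x)$ and computing the matrix of $f^*$ in the basis $e_1,\dots,e_n$ from $h(f^*e_k,e_l)=h(e_k,fe_l)$ then produces exactly $(a_{ij})^*=\text{\rm Diag}(\pi_1^{-1},\dots,\pi_n^{-1})(\overline{a_{ij}})^{t}\,\text{\rm Diag}(\pi_1,\dots,\pi_n)$ with $\overline{\pi_i}=\pi_i$, which is Case~2. In the symplectic case, from $0=h(v,w)+h(w,v)=h(v,w)+\epsilon\,\theta(h(v,w))$ together with nondegeneracy -- so $h(v,w)$ runs over all of $D$ -- I would deduce $\epsilon=-1$ and $\theta=\mathrm{id}$, whence $D$ is commutative, i.e.\ a field; a nondegenerate alternating form forces $n$ to be even, $n=2m$, and choosing a symplectic basis and grouping it into $m$ pairs identifies $R$ with $\text{\rm M}_m(\text{\rm M}_2(D))$ on which $*$ is block transposition followed by $X\mapsto J^{-1}X^{t}J$ on each $2\times 2$ block, $J=\bigl[\begin{smallmatrix}0&1\\-1&0\end{smallmatrix}\bigr]$; expanding $J^{-1}X^{t}J$ recovers the displayed operation $\sigma$, which is Case~1.

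The hard part is the first step: showing that an arbitrary anti-automorphism of $\mathrm{End}_D(V)$ is the adjoint of a sesquilinear form, unique up to a scalar, and that being an involution forces the form to be $\epsilon$-hermitian with $\theta^2=\mathrm{id}$. This is precisely where the module-theoretic input is indispensable, and where one must take care that $h$ is merely sesquilinear (not $D$-linear) and that the auxiliary anti-automorphism $\theta$ genuinely depends on $h$. A smaller but real subtlety is $\mathrm{char}\,D=2$: there Gram--Schmidt can stall on an alternating subspace even when $h$ itself is not alternating, so the diagonal form in Case~2 is the statement away from characteristic~$2$, the characteristic~$2$ case being treated separately in the classical references.
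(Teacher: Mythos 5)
The paper gives no proof of this theorem at all --- it is quoted as well known, with citations to Montgomery and to Herstein's ``Rings with Involution'' --- and your sketch is exactly the classical argument from those sources: realize $*$ as the adjoint involution of a nondegenerate $\epsilon$-hermitian form, identify symmetric minimal idempotents with anisotropic lines so that the transpose/symplectic dichotomy becomes non-alternating versus alternating, and then take an orthogonal basis in the first case and a symplectic basis in the second. The one loose end, your deferral of characteristic $2$ in Case~2, is harmless: a nondegenerate non-alternating $\epsilon$-hermitian form admits an orthogonal basis in every characteristic, provided one chooses the anisotropic vector at each step more carefully than naive Gram--Schmidt (so that its orthogonal complement is not left totally alternating), exactly as is done in the cited references.
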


Let $A$ be an additive subgroup of $R$, and let $S(A)$ (resp. $K(A)$, $T(A)$, $K_0(A)$) be the set of all {\it symmetric} (resp. {\it skew, trace, skew trace}) elements of $A$. Precisely, we have
$$
S(A):=\{x\in A\mid x=x^*\},\ \ K(A):=\{x\in A\mid x^*=-x\},
$$
$$
T(A):=\{x+x^*\mid x\in A\}\ \text{\rm and}\ K_0(A):=\{x-x^*\mid x\in A\}.
$$

A ring $R$ is called {\it prime} if, given $a, b\in R$, $aRb=0$ implies that either $a=0$ or $b=0$. Let $R$ be a prime ring, and let $Q_s(R)$ be the Martindale symmetric ring of quotients of $R$. It is known that $Q_s(R)$ is also a prime ring with center, denoted by $C$, which is called the {\it extended centroid} of $R$. Note that $C$ is always a field. It is also known that if $R$ is a prime PI-ring then $Q_s(R)=RC$ and $IC=RC$ for any nonzero ideal $I$ of $R$.
See \cite{beidar1996} for details.
Let $R$ be a prime ring with involution $*$. It is known that the involution $*$ can be uniquely extended to an involution, denoted by the same $*$, on $Q_s(R)$, and $C^*=C$. We say that $*$ is of the {\it first kind} if $*$ is the identity map on $C$. Otherwise, we say that $*$ is of the {\it second kind}.
\vskip6pt

\begin{defi}\label{def2}
  A prime ring $R$ is called {\it exceptional} if both $\text{\rm char}\,R= 2$ and $\dim_CRC=4$.
\end{defi}


An additive map $d\colon R\to R$ is called a {\it derivation} if $d(xy)=d(x)y+xd(y)$ for all $x, y\in R$. The derivation $d$ of the ring $R$ is called {\it inner} if there exists $a\in R$ such that $d(x)=[a, x]$ for all $x\in R$. It is known that if $R$ is a prime ring, every derivation $d$ of $R$ can be uniquely extended to a derivation, denoted by the same $d$, of $Q_s(R)$. The derivation $d$ of $R$ is called {\it X-inner} if the extension of $d$ to $Q_s(R)$ is inner, that is, there exists $b\in Q_s(R)$ such that $d(x)=[b, x]$ for all $x\in Q_s(R)$. Otherwise, $d$ is called {\it X-outer}.
\vskip6pt

\begin{defi}\label{def3}
  Let $R$ be a prime ring with involution $*$ such that $RC$ is a primitive ring, not a division ring, with nonzero socle.
  We say that $*$ is of the symplectic (resp. transpose) type on $R$ if $*$ is of the symplectic (resp. transpose) type on $RC$.
\end{defi}


Given a unital ring $R$, we let $R^\times$ be the set of all units in the ring $R$. By a {\it $*$-subset} we mean a subset $M$ of $R$ such that $M^*=M$.
In recent papers \cite{chacron2022, chacron2023, chacron2024}, Chacron studied $*$-submonoids $M$ (i.e., multiplicatively closed $*$-subsets with $1$) in a simple artinian ring $R$ such that $M$ is invariant under all inner automorphisms of $R$ (i.e., $uMu^{-1}\subseteq M$ for all $u\in R^\times$).
We mention the main theorems proved in \cite{chacron2023, chacron2024}.

\begin{thm} (\cite[Theorem 6]{chacron2023})\label{thm33}
Let $R$ be a simple artinian ring with involution $*$, $Z(R)\ne \text{\rm GF}(2)$, and let $M$ be a noncentral $*$-submonoid of $R$, which is invariant under all inner automorphisms of $R$. If $[T(M), T(M)]=0$, then $[T(R), T(R)]=0$, $\dim_{Z(R)}R=4$, and $*$ is of the first kind.
\end{thm}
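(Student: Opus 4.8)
The plan is to manufacture a nonzero derivation out of $[T(M),T(M)]=0$, apply the derivation result announced in the abstract to obtain $T(R)^{2}\subseteq Z(R)$, and then extract the three assertions from the structure theorem~\ref{thm5}. Write $R=\M_{n}(D)$ with $D$ a division ring, and put $A=$ the additive subgroup of $R$ generated by $M$. Since $M^{*}=M$ and $uMu^{-1}=M$ for every unit $u$, the group $A$ is a $*$-closed additive subgroup that is again invariant under all inner automorphisms, and $A\not\subseteq Z(R)$ because $M$ is noncentral. As $x\mapsto x+x^{*}$ is additive, $T(A)$ equals the additive subgroup generated by $T(M)$, so bi-additivity of $[\,\cdot\,,\,\cdot\,]$ turns $[T(M),T(M)]=0$ into $[T(A),T(A)]=0$. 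I split into $T(A)\not\subseteq Z(R)$ and $T(A)\subseteq Z(R)$.

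If $T(A)\not\subseteq Z(R)$, pick $t_{0}\in T(A)\setminus Z(R)$; then $d:=[t_{0},\,\cdot\,]$ is a nonzero derivation of $R$ with $d\bigl(T(A)\bigr)=0$. The crux is to find inside $A$ a non-abelian Lie ideal $L$ of $R$; since $A=A^{*}$, replacing $L$ by $L+L^{*}\subseteq A$ makes it $*$-invariant, so $T(L)\subseteq T(A)$ and $d\bigl(T(L)\bigr)=0$. For $n\geq 2$ the ring $R$ has a nontrivial idempotent, and one uses the standard fact that a noncentral additive subgroup stable under all inner automorphisms of such a ring must contain a non-abelian Lie ideal — concretely, conjugating a noncentral element of $A$ by unipotents $1+\lambda e_{ij}$ and forming differences produces a rank-$1$ nilpotent, whose conjugacy class additively spans a non-abelian Lie ideal (e.g.\ $[R,R]$) — the hypothesis $Z(R)\neq\text{\rm GF}(2)$ being what rules out the genuinely degenerate small situations. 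For $n=1$, $D$ is noncommutative (otherwise $M\subseteq D=Z(R)$) and one checks that $A$ contains the non-abelian $*$-invariant Lie ideal $[D,D]$. In either case the derivation result announced in the abstract — a nonzero derivation annihilating $T(L)$ for a non-abelian Lie ideal $L$ of a prime ring forces $T(R)^{2}\subseteq Z(R)$ — applies and gives $T(R)^{2}\subseteq Z(R)$.

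It remains to deduce the conclusions from $T(R)^{2}\subseteq Z(R)$ and to deal with the case $T(A)\subseteq Z(R)$; in that case $T(L)\subseteq T(A)\subseteq Z(R)$ for the non-abelian $*$-invariant Lie ideal $L$ above, which likewise restricts the structure severely. Invoking Theorem~\ref{thm5}: if $*$ were of transpose type with $n\geq 2$, the structure theorem exhibits symmetric, hence trace, elements such as $e_{ii}-e_{jj}$ and $e_{ij}+e_{ji}$ (in characteristic $2$, zero-diagonal symmetric matrix units) whose product is noncentral, contradicting $T(R)^{2}\subseteq Z(R)$; so for $n\geq 2$ the involution is symplectic, whence $n=2m$ and $D$ is a field, and for $m\geq 2$ one again finds noncentral products of trace elements. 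Therefore $m=1$: $R=\M_{2}(D)$ with $D$ a field and $*$ symplectic, where a direct computation gives $T(R)=Z(R)$, $\dim_{Z(R)}R=4$, and $*$ of the first kind. If $n=1$, $D$ is a noncommutative division ring with $T(D)^{2}\subseteq Z(R)$; analysing the additive map $x\mapsto x+x^{*}$ shows $*$ is of the first kind (a second-kind involution forces $(z-z^{*})D\subseteq Z(R)$ for suitable $z\in Z(R)$, hence $D=Z(R)$, absurd) and that $S(D)$ is $1$-dimensional over $Z(R)$, so $D$ is a quaternion division algebra with its canonical involution and $\dim_{Z(R)}R=4$. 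In all surviving cases $\dim_{Z(R)}R=4$, $*$ is of the first kind, and $T(R)$ lies in $Z(R)$ (or spans a commutative subalgebra), so $[T(R),T(R)]=0$.

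The main obstacle I anticipate is the Lie-ideal extraction: proving that a noncentral $*$-closed additive subgroup stable under all inner automorphisms of a simple artinian ring contains a non-abelian Lie ideal, with the characteristic-$2$ cases and the hypothesis $Z(R)\neq\text{\rm GF}(2)$ handled correctly — this is precisely where the small-field phenomena and the exceptional rings of Definition~\ref{def2} enter. Secondary difficulties are the division-ring case $n=1$, which has no nontrivial idempotents and needs a separate treatment via conjugation-invariant additive subgroups (equivalently, Lie ideals) of a division ring, and the characteristic-$2$ bookkeeping in passing from $T(R)^{2}\subseteq Z(R)$ to the final structural conclusions, which is routine but delicate.
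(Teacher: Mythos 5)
Your high-level plan (pass to the additive closure $A=M^{+}$, find a large invariant substructure, lift the trace identity or apply a derivation theorem, then finish with Theorem \ref{thm2} and the structure theory) is the right shape, but two of its load-bearing steps are gaps. The serious one is the division-ring case $n=1$: your assertion that a noncentral invariant additive subgroup (or subring) $A$ of a division ring "contains the non-abelian Lie ideal $[D,D]$" is not a standard fact and is exactly what fails to be available -- a division ring has no nontrivial square-zero elements, so the unipotent-conjugation trick you use for $n\ge 2$ produces nothing, and no analogue of Chuang's or Herstein's invariance theorems is known there. The paper's actual route is entirely different: one first shows $[M^{+},M^{+}]\ne 0$, observes that $M^{+}$ satisfies the $*$-polynomial $[X+X^{*},Y+Y^{*}]$, invokes Amitsur \cite{amitsur1969} to conclude $M^{+}$ is PI, then Rowen \cite{rowen1973} and the Brauer--Cartan--Hua theorem to get $R=M^{+}Z(R)$, after which the identity lifts to $[T(R),T(R)]=0$ on all of $R$ (Theorems \ref{thm9} and \ref{thm10}, Corollary \ref{cor10}). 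Without some such PI argument your $n=1$ branch does not get started.

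The second gap is in the matrix case: Chuang's theorem on invariant \emph{additive subgroups} \cite{chuang1987} fails precisely for exceptional rings in the sense of Definition \ref{def2}, i.e.\ $\text{\rm M}_2(F)$ with $\text{\rm char}\,F=2$, and the hypothesis $Z(R)\ne\text{\rm GF}(2)$ does \emph{not} exclude these (consider $\text{\rm M}_2(\text{\rm GF}(4))$), so the "standard fact" you invoke has a loophole exactly where your characteristic-$2$ bookkeeping is most delicate. The repair is to notice that $A=M^{+}$ is a \emph{subring} (since $M$ is multiplicatively closed) and to use Herstein's invariant-subring theorem \cite{herstein1983} instead, whose only exception is $R\cong\text{\rm M}_2(\text{\rm GF}(2))$ and hence is ruled out by the hypothesis; then $M^{+}$ contains a nonzero ideal, which is all of $R$ by simplicity, so $[T(R),T(R)]=0$ falls out at once and the derivation detour through Theorem \ref{thm20} is unnecessary -- one finishes directly with Theorem \ref{thm2} and Lemma \ref{lem7}. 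This is the paper's route (Remark \ref{remark1}(ii) together with Corollary \ref{cor11}). Finally, your branch $T(A)\subseteq Z(R)$ is dismissed with "restricts the structure severely" rather than an argument; on the corrected route it is handled uniformly, since the ideal argument yields $T(R)\subseteq Z(R)$ outright and Lemma \ref{lem7}(iii) then gives the symplectic, first-kind, four-dimensional conclusion.
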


We remark that, in the above theorem, the ring $R$ and the involution $*$ can be described in more details according to either $R\cong \text{\rm M}_2(Z(R))$ or $R$ a $4$-dimensional central division algebra.
The following theorem complements the above theorem under the current $2$-torsion free assumption.

\begin{thm} (\cite[Theorem 6]{chacron2024})\label{thm34}
Let $R$ be a $2$-torsion free simple artinian ring with involution $*$, and let $M$ be a noncentral $*$-submonoid of $R$, which is invariant under all inner automorphisms of $R$. If $[K_0(M), K_0(M)]=0$, then $[K(R), K(R)]=0$, $\dim_{Z(R)}R=4$, and $*$ is of the orthogonal (i.e., transpose) type.
\end{thm}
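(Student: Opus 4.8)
\noindent\emph{Proof proposal.}
The plan is to extract from $M$ a non-abelian Lie ideal $L$ of $R$ and a nonzero derivation $d$ with $d(K_0(L))=0$, apply the $K_0$-version of our main theorem (the derivation result stated in the abstract) to conclude $K_0(R)^2\subseteq Z(R)$, and then read off the structure of $R$ and of $*$. Write $Z:=Z(R)$ and let $Z_0$ denote the subfield of symmetric central elements. Since $R$ is $2$-torsion free and simple artinian, $R=\M_n(D)$ with $\mathrm{char}\,R\ne2$, $Z=Z(D)$ is a field, $R$ is a finite-dimensional central simple $Z$-algebra, and $R$ is noncommutative because $M$ is noncentral; in particular $[R,R]\not\subseteq Z$, and $K_0(R)=K(R)$ since $2$ is invertible.

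Put $L:=Z_0M$, the $Z_0$-span of $M$. As $M$ is a submonoid, $L$ is a subring of $R$, and as $M$ is invariant under all inner automorphisms, so is $L$. Conjugating an element of $L$ by the unipotent units $1\pm w$ with $w$ square-zero and subtracting the two outcomes, using $\mathrm{char}\,R\ne2$, gives $[w,L]\subseteq L$ for every square-zero $w$; together with the fact that $L$ is a subring this yields $[R,L]\subseteq L$, so $L$ is a Lie ideal of $R$, and $L\not\subseteq Z$ since $M$ is noncentral. By the classical structure theory of Lie ideals of simple rings --- available without restriction because $\mathrm{char}\,R\ne2$, so that no exceptional case intervenes --- we get $[R,R]\subseteq L$, and since an abelian Lie ideal of a simple ring is central, $L$ is non-abelian. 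The same circle of ideas rules out $K_0(M)\subseteq Z$: were it true, then for $m,m'\in M$ the relations $m-m^*,\,m'-m'^*,\,mm'-(mm')^*\in Z$, combined with the multiplicative closure of $M$ and $\mathrm{char}\,R\ne2$, would give $[m,m']\in Z$, hence $[L,L]\subseteq Z$, hence $[[R,R],[R,R]]\subseteq Z$ and $[R,R]\subseteq Z$ --- impossible. Fix then $k\in K_0(M)\setminus Z$ and set $d:=[k,\,\cdot\,]$, a nonzero derivation of $R$.

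Here is where passing to $Z_0$ (rather than $Z$) pays off: for $v=\sum z_im_i\in L$ with $z_i\in Z_0$ one has $z_i^*=z_i$, so $v-v^*=\sum z_i(m_i-m_i^*)\in Z_0\,K_0(M)$; since $k\in K_0(M)$ and $[K_0(M),K_0(M)]=0$, this forces $d(K_0(L))=0$. Applying our main theorem to the prime ring $R$, the non-abelian Lie ideal $L$, and the nonzero derivation $d$ gives $K_0(R)^2\subseteq Z$, i.e. $K(R)^2\subseteq Z$. Were $*$ of the second kind, choosing $0\ne\theta\in K(Z)$ one would have $\theta^{-1}K(R)=S(R)$, hence $S(R)^2=\theta^{-2}K(R)^2\subseteq Z$, hence $R^2=(S(R)+K(R))^2\subseteq Z$ and $R$ commutative, a contradiction; so $*$ is of the first kind. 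Then $K(R)\cap Z=0$, and for skew $k_1,k_2$ the element $[k_1,k_2]$ is skew and lies in $Z$ (by $K(R)^2\subseteq Z$), hence $[k_1,k_2]=0$; thus $[K(R),K(R)]=0$. Finally, since $*$ is of the first kind every skew element has vanishing reduced trace, so $K(R)\subseteq[R,R]$, and a routine inspection of the two cases in Theorem~\ref{thm5} shows that $K(R)^2\subseteq Z$ is incompatible with the symplectic type (there $K(R)$ already contains a copy of the trace-zero $2\times2$ matrices over $D$) and, in the transpose case, with $\dim_ZR>4$ (e.g. for $n\ge3$ the skew elements $e_{12}-e_{21}$ and $e_{23}-e_{32}$ have noncentral product $e_{13}$, and for $n=2$ with $D$ noncommutative a nonzero skew element of $D$ placed in position $(1,1)$ has noncentral square). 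Hence $\dim_ZR=4$ and $*$ is of the orthogonal (transpose) type.

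The step I expect to be the main obstacle is the one in the second paragraph: turning the multiplicative invariance of $M$ under inner automorphisms into the additive statement that $Z_0M$ is a non-abelian Lie ideal of $R$ (the bookkeeping with square-zero elements, and the cases in which $D$ is noncommutative or $\mathrm{char}\,R$ divides $n$), together with the concluding case analysis against $K(R)^2\subseteq Z$. Once our main theorem is available, the intervening steps are essentially formal.
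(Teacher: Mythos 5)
There is a genuine gap: your construction of the non-abelian Lie ideal $L=Z_0M$ rests entirely on conjugating by the unipotent units $1\pm w$ with $w^2=0$, and a simple artinian ring includes the case $n=1$, i.e.\ $R$ a division ring, where the only square-zero element is $0$. In that case your argument produces no Lie ideal at all (and also no way to rule out $[M,M]\subseteq Z$ or $K_0(M)\subseteq Z$, since those exclusions also lean on $[R,R]\subseteq L$). This is not ``bookkeeping'': the paper devotes Section 5 to exactly this case, proving via Amitsur's theorem on $*$-polynomial identities and Rowen's theorem that a noncentral invariant $*$-subring $M$ of a division ring satisfying $[K_0(M),K_0(M)]=0$ is PI with $R=MZ(R)$, so that $M$ and $R$ satisfy the same $*$-generalized identities (Theorems \ref{thm9} and \ref{thm10}); the Brauer--Cartan--Hua theorem replaces the square-zero trick there. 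Even for $n\ge 2$ your step ``$[w,L]\subseteq L$ for every square-zero $w$, together with $L$ being a subring, yields $[R,L]\subseteq L$'' is asserted rather than proved, and it is delicate when $\mathrm{char}\,R$ divides $n$ (the square-zero elements only span the trace-zero part). The clean tool is Herstein's invariant-subring theorem \cite[Theorem, p.26]{herstein1983} (or Chuang's \cite[Theorem 1]{chuang1987}): for $n\ge2$ the noncentral invariant subring $M^+$ contains a nonzero ideal, hence equals $R$ by simplicity, which is much stronger than what you need and is exactly the route of Corollary \ref{cor11}.

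Apart from this, your architecture differs from the paper's in an interesting way: you feed the problem directly into Theorem E (the Lie-ideal derivation theorem \ref{thm32}) by choosing $k\in K_0(M)\setminus Z$ and observing that $K_0(Z_0M)\subseteq Z_0K_0(M)$ is centralized by $k$ --- a nice use of the symmetric central subfield $Z_0$ --- whereas the paper reduces to $M^+\supseteq$ an ideal (equal to $R$ here) and applies the much lighter Proposition \ref{pro1} plus Theorem B. Your endgame (second kind forces $S(R)=\theta^{-1}K(R)$ and hence $R^2\subseteq Z$; first kind forces $K(R)\cap Z=0$ and hence $[K,K]=0$ from $K^2\subseteq Z$) is correct and a pleasant alternative to reading the conclusions off Theorem \ref{thm4}(v) and Lemma \ref{lem1}(ii), though your explicit skew matrices $e_{12}-e_{21}$, $e_{23}-e_{32}$ need the weights $\pi_i$ inserted to actually be skew, and the $4$-dimensional division-algebra subcase of the type analysis needs Lemma \ref{lem6} rather than Theorem \ref{thm5}. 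To repair the proposal you must either restrict to $n\ge2$ and cite the invariant-subring theorem, or add the PI argument for the division case.
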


\begin{remark}\label{remark1} {\rm (i)\
We remark that Corollaries \ref{cor10} and \ref{cor11} generalize both of the above two theorems in the division case and in the non division case, respectively.

(ii)\ In Theorems \ref{thm33} and \ref{thm34}, let $M^+$ denote the additive subgroup of $R$ generated by $M$. Then $M^+$ is a noncentral $*$-subring of $R$, which is invariant under all inner automorphisms of $R$. Moreover, $[T(M), T(M)]=0$ if and only if $[T(M^+), T(M^+)]=0$, and $[K_0(M), K_0(M)]=0$  if and only if $[K_0(M^+), K_0(M^+)]=0$. Hence it suffices to assume from the start that $M$ is a noncentral $*$-subring of $R$, which is invariant under all inner automorphisms of $R$.}
\end{remark}

We first give some observations for the case that $T(R)\subseteq Z(R)$ (resp. $K_0(R)\subseteq Z(R)$).
In a unital ring $R$ with involution $*$, Lim proved that $R$ is a central $*$-trace ring (i.e., $x+x^*\in Z(R)$ for all $x\in R$) if and only if $R$ is a central $*$-norm ring (i.e., $xx^*\in Z(R)$ for all $x\in R$) (see \cite[Theorem 3]{lim1979}). Also, a $*$-commuting ring $R$ (i.e., $[x, x^*]=0$ for all $x\in R$) with $2R=0$ must be a central $*$-trace ring (see \cite[Theorem 2]{lim1979}).
Lim also gave an example to show that in general a $*$-commuting ring is not necessary to be a central $*$-trace ring (see \cite[Example 1, p.127]{lim1979}).
 We remark that the example is an algebra over a field $F$ of characteristic not $2$, which is not a semiprime ring.
Recall that a ring $R$ is called {\it semiprime} if, for $a\in R$, $aRa=0$ implies $a=0$. This is equivalent to saying that $R$ has no nonzero nilpotent ideals.

Given an involution $*$ on a unital semiprime ring $R$, Chacron proved that the following are equivalent:
(1) $R$ is a $*$-commuting ring; (2) $R$ is a central $*$-trace ring;
(3) $R$ is a central $*$-norm ring (see \cite[Theorem 2.3]{chacron2016}).
In fact, commuting anti-automorphisms on semiprime rings, not necessarily with unity, are completely characterized  (see \cite[Theorem 1.2]{lee2017}, and also \cite[Theorem 1.1]{lee2018}).

\begin{pro}\label{pro2}
Let $R$ be a semiprime ring with involution $*$. Then $K_0(R)\subseteq Z(R)$ if and only if $T(R)\subseteq Z(R)$ and $2R\subseteq Z(R)$.
\end{pro}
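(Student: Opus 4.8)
The backward implication requires no work and no extra hypothesis: if $T(R)\subseteq Z(R)$ and $2R\subseteq Z(R)$, then for each $x\in R$ the element $x-x^{*}=2x-(x+x^{*})$ is a difference of two central elements, so $K_{0}(R)\subseteq Z(R)$. Hence the content is entirely in the forward implication, and there it is enough to prove $T(R)\subseteq Z(R)$: once this is in hand, $2x=(x+x^{*})+(x-x^{*})\in Z(R)$ for every $x$, which gives $2R\subseteq Z(R)$ as well.

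So assume $K_{0}(R)\subseteq Z(R)$. The plan is to first observe that $*$ is a \emph{commuting} anti-automorphism of $R$: for every $x\in R$, since $x-x^{*}\in Z(R)$,
$$
0=[\,x-x^{*},\,x\,]=xx^{*}-x^{*}x=[x,x^{*}].
$$
I would then invoke the complete description of commuting anti-automorphisms on semiprime rings, namely \cite[Theorem 1.2]{lee2017} (see also \cite[Theorem 1.1]{lee2018}, and \cite[Theorem 2.3]{chacron2016} in the unital case), which yields that $R$ is a central $*$-trace ring, that is, $T(R)\subseteq Z(R)$; together with the previous paragraph this completes the proof.

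The only genuinely substantial point is the passage from ``$*$ commuting'' to ``$T(R)\subseteq Z(R)$'': this is precisely where semiprimeness is indispensable, since Lim's Example~1 in \cite{lim1979} shows the conclusion fails in general. If one prefers a self-contained argument in place of the cited structure theorem, I would argue directly. Writing $k(x):=x-x^{*}\in Z(R)$, a short computation gives $k(x^{2})=2k(x)x-k(x)^{2}$, whence $2k(x)x=k(x^{2})+k(x)^{2}\in Z(R)$ and therefore $k(x)[2x,r]=0$ for all $r\in R$; replacing $r$ by $sr$ then yields $k(x)R[2x,r]=0$ for all $x,r\in R$. When $R$ is prime this forces, for each fixed $x$, either $k(x)=0$ or $2x\in Z(R)$; and since $k(x)=0$ for all $x$ would mean $*=\mathrm{id}$, i.e. $R$ commutative, while otherwise the remaining elements (those with $k(x)=0$) are handled by adding a fixed non-symmetric element, the conclusion $2R\subseteq Z(R)$ follows. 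For general semiprime $R$ one then reduces to the prime case via the $*$-prime quotients $R/(P\cap P^{*})$, $P$ ranging over prime ideals, on which the hypothesis $K_{0}\subseteq Z$ descends; the only additional case, that of an exchange involution on $A\oplus A^{\mathrm{op}}$, again forces commutativity. The bookkeeping of this reduction is the main technical nuisance, which is why appealing to the cited characterization is the cleaner route.
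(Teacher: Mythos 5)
Your proof is correct and follows essentially the same route as the paper: deduce from $x-x^*\in Z(R)$ that $[x,x^*]=0$, invoke the characterization of commuting (anti-)automorphisms on semiprime rings (\cite[Theorem 1.1]{lee2018}, or \cite[Theorem 2.3]{chacron2016} in the unital case) to get $T(R)\subseteq Z(R)$, and then conclude $2x=(x-x^*)+(x+x^*)\in Z(R)$. The supplementary self-contained sketch is an optional extra and not needed; the paper likewise just cites the structure theorem.
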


\begin{proof}
Assume that $K_0(R)\subseteq Z(R)$. Let $x\in R$. Then $x-x^*\in Z(R)$ and so $[x, x^*]=0$. Therefore $R$ is a $*$-commuting ring.
In view of \cite[Theorem 1.1]{lee2018} (or \cite[Theorem 2.3]{chacron2016} for semiprime rings with unity), $R$ is a central $*$-trace ring, i.e., $T(R) \subseteq Z(R)$.
Given $x\in R$, we have
$
2x=(x-x^*)+(x+x^*)\in Z(R)
$
and hence $2R\subseteq Z(R)$. The converse is obvious.
\end{proof}

The following is an immediate consequence of Proposition \ref{pro2}.

\begin{cor}\label{cor8}
Let $R$ be a noncommutative prime ring with involution $*$. Then $K_0(R)\subseteq Z(R)$ if and only if $T(R)\subseteq Z(R)$ and $\text{\rm char}\,R=2$.
\end{cor}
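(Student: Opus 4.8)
The plan is to reduce everything to Proposition~\ref{pro2}. Since every prime ring is semiprime, that proposition applies to $R$ and tells us that $K_0(R)\subseteq Z(R)$ holds if and only if both $T(R)\subseteq Z(R)$ and $2R\subseteq Z(R)$ hold. Thus the corollary follows once I show that, for a \emph{noncommutative} prime ring $R$, the condition $2R\subseteq Z(R)$ is equivalent to $\text{\rm char}\,R=2$.

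One implication is immediate: if $\text{\rm char}\,R=2$ then $2R=0\subseteq Z(R)$. For the reverse implication I would argue as follows. First note that $2R=\{2r\mid r\in R\}$ is a two-sided ideal of $R$: it is an additive subgroup, and $s(2r)=2(sr)=(2r)s\in 2R$ for all $r,s\in R$. So if $2R\subseteq Z(R)$, then $2R$ is an ideal of the prime ring $R$ lying in the center. Now I would invoke (or quickly reprove) the standard fact that a prime ring with a nonzero central ideal is commutative: if $0\ne c$ lies in such an ideal $I$, then for all $x,y\in R$ the element $cx$ lies in $I\subseteq Z(R)$, hence is central, so $0=[cx,y]=c[x,y]$; therefore $cR[x,y]=Rc[x,y]=0$, and primeness forces $[x,y]=0$. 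Since $R$ is noncommutative, this central ideal must be zero, i.e.\ $2R=0$; and because $R\ne 0$ (being noncommutative) this means $\text{\rm char}\,R=2$. Combining with Proposition~\ref{pro2} gives the equivalence.

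There is essentially no obstacle here --- the statement is a short corollary of Proposition~\ref{pro2}. The only two points that deserve a word of care are that $R$ is not assumed to have a unity, so one should work with the ideal $2R$ rather than with an element $2\cdot 1$; and that one must record $R\ne 0$ in order to pass from $2R=0$ to $\text{\rm char}\,R=2$.
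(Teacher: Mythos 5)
Your proof is correct and matches the paper's intent: the paper offers no written argument, simply declaring the corollary an immediate consequence of Proposition~\ref{pro2}, and your reduction (via the observation that a nonzero central ideal such as $2R$ would force a prime ring to be commutative, so $2R\subseteq Z(R)$ collapses to $2R=0$, i.e.\ $\text{\rm char}\,R=2$) is exactly the routine verification being left to the reader. The two points of care you flag (no unity, and $R\ne 0$) are handled appropriately.
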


Our first aim is to characterize $[T(R), T(R)]=0$ (resp. $[K_0(R), K_0(R)]=0$) for a given prime ring $R$ with involution $*$.
The following two characterizations are Theorems \ref{thm2} and \ref{thm4}.\vskip6pt

\noindent{\bf Theorem A.}
{\it Let $R$ be a prime ring with involution $*$. If $T(R)\nsubseteq Z(R)$, then the following are equivalent:

(i)\ $[T(R), T(R)]=0$;

(ii)\ $d(T(R))=0$ for some nonzero  derivation $d$ of $R$;

(iii)\ $[b, T(R)]=0$ for some $b\in RC\setminus C$;

(iv)\ $R$ is exceptional and $*$  is of the first kind;

(v)\ $T(R)^2\subseteq Z(R)$.}
\vskip6pt

\noindent{\bf Theorem B.}
{\it Let $R$ be a prime ring with involution $*$. If $K_0(R)\nsubseteq Z(R)$,
then the following are equivalent:

(i)\ $[K_0(R), K_0(R)]=0$;

(ii)\ $d(K_0(R))=0$ for some nonzero  derivation $d$ of $R$;

(iii)\ $[b, K_0(R)]=0$ for some $b\in RC\setminus C$;

(iv)\ $K_0(R)^2\subseteq Z(R)$;

(v)\ $\dim_CRC=4$, $*$ is of the transpose type, and $*$ is of the first kind.}
\vskip6pt

In the statements of Theorems A and B, we need the notion of the transpose or symplectic type on division algebras. It will be
given in Definition \ref{def4} of the next section.
For division rings, we will prove the following main theorem (i.e., Theorem \ref{thm14}).
\vskip6pt

\noindent{\bf Theorem C.}
{\it Let $R$ be a division ring with involution $*$ and let
$M$ be a noncentral $*$-subring of $R$ such that $M$ is invariant under all inner automorphisms of $R$.
If $d$ is a nonzero derivation of $R$ such that
$
d(T(M))=0
$
(resp.\ $d(K_0(M))=0$), then $T(R)^2\subseteq Z(R)$  (resp. $K_0(R)^2\subseteq Z(R)$).}\vskip6pt

Finally, we extend Theorems A and B to the case of Lie ideals (i.e., Theorems \ref{thm20} and \ref{thm32}).\vskip6pt

\noindent{\bf Theorem D.}\ {\it Let $R$ be a prime ring with involution $*$, $L$ a non-abelian Lie ideal of $R$, and $d$ a nonzero derivation of $R$.
If $d(T(L))=0$, then $T(R)^2\subseteq Z(R)$.}\vskip6pt

\noindent{\bf Theorem E.}\ {\it Let $R$ be a prime ring with involution $*$, $L$ a non-abelian Lie ideal of $R$, and $d$ a nonzero derivation of $R$.
If $d(K_0(L))=0$, then $K_0(R)^2\subseteq Z(R)$.}\vskip6pt

By a {\it special inner automorphism} $\phi$ of a ring $R$, not necessarily with unity, we mean that there exists  a square zero element $t\in R$ such that
$
\phi(x)=(1+t)x(1+t)^{-1}
$
for all $x\in R$.
Chuang proved that, in a prime ring $R$ containing a nontrivial idempotent, every noncentral additive subgroup, which is invariant under all special inner automorphisms, contains a Lie ideal of the form $[I, R]$ with $I$ a nonzero ideal of $R$ except when $R$ is exceptional (see \cite[Theorem 1]{chuang1987}).
Note that $[I, R]$ is a non-abelian Lie ideal of $R$ (see Lemma \ref{lem5} (iii)). Applying it, the following is an immediate consequence of Theorems D and E (cf. Theroem \ref{thm19}).\vskip6pt

\begin{cor}\label{cor7}
Let $R$ be a prime ring with involution $*$, containing a nontrivial idempotent.  
Suppose that $A$ is a noncentral additive subgroup of $R$, which is invariant under all special inner automorphisms of $R$.
If $d$ is a nonzero derivation of $R$ such that $d(T(A))=0$ (resp. $d(K_0(A))=0$), then $T(R)^2\subseteq Z(R)$ (resp. $K_0(R)^2\subseteq Z(R)$) except when $R$ is exceptional.
\end{cor}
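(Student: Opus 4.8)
The plan is to obtain Corollary~\ref{cor7} as a formal consequence of Theorems~D and~E, once Chuang's theorem is used to locate a non-abelian Lie ideal inside $A$. Assume $R$ is not exceptional. Since $A$ is noncentral, $R$ is noncommutative. By hypothesis $A$ is invariant under all special inner automorphisms of $R$, so \cite[Theorem 1]{chuang1987} applies and furnishes a nonzero ideal $I$ of $R$ with
$$
L:=[I,R]\subseteq A.
$$
By Lemma~\ref{lem5}(iii), $L$ is a non-abelian Lie ideal of $R$.

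Next I would transport the vanishing hypothesis on $d$ from $A$ to $L$. Because $L\subseteq A$, every $x\in L$ lies in $A$, so $x+x^{*}\in T(A)$ and $x-x^{*}\in K_0(A)$; hence $T(L)\subseteq T(A)$ and $K_0(L)\subseteq K_0(A)$. Therefore, if $d$ is a nonzero derivation of $R$ with $d(T(A))=0$, then $d(T(L))=0$, and Theorem~D gives $T(R)^{2}\subseteq Z(R)$; and if $d(K_0(A))=0$, then $d(K_0(L))=0$, and Theorem~E gives $K_0(R)^{2}\subseteq Z(R)$. This is exactly the assertion of the corollary, under the standing assumption that $R$ is not exceptional.

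There is no genuine obstacle here: the substance is entirely packaged into the results being invoked. Chuang's theorem is precisely what accounts for the clause ``except when $R$ is exceptional'', since the exceptional prime rings are exactly those for which a noncentral additive subgroup invariant under all special inner automorphisms may fail to contain a Lie ideal of the form $[I,R]$; and Lemma~\ref{lem5}(iii) is what guarantees that the Lie ideal so produced is non-abelian, a point that in turn rests on the description of abelian Lie ideals of prime rings together with the exclusion of the exceptional case. Granting these, the remaining argument is just the short verification carried out above that the hypotheses of Theorems~D and~E are met, and so the proof is immediate.
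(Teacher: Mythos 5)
Your proposal is correct and follows exactly the route the paper intends: apply Chuang's theorem \cite[Theorem 1]{chuang1987} to place a non-abelian Lie ideal $[I,R]\subseteq A$ (the non-exceptional hypothesis being precisely what Chuang's theorem requires), observe $T([I,R])\subseteq T(A)$ and $K_0([I,R])\subseteq K_0(A)$, and invoke Theorems~D and~E. The paper treats this as an immediate consequence and gives no further detail, so nothing is missing.
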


Whenever it is more convenient, we will use the widely accepted shorthand
\textquotedblleft iff\textquotedblright\ for \textquotedblleft if and only
if\textquotedblright\ in the text.

\section{Preliminaries}
We refer the following lemma to \cite[Lemmas 1 and 5]{lee1995}.

\begin{lem}\label{lem1}
Let $R$ be a noncommutative prime ring with involution $*$.

(i)\ If $d$ is a nonzero derivation of $R$ such that either $d(T(R))\subseteq Z(R)$ or $d(K_0(R))\subseteq Z(R)$, then $\dim_CRC=4$.

(ii)\ If either $T(R)^2\subseteq Z(R)$ or $K_0(R)^2\subseteq Z(R)$, then $\dim_CRC=4$.
\end{lem}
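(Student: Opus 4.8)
The plan is to convert each hypothesis into a generalized or differential polynomial identity with involution, to invoke the structure theory of prime rings satisfying such an identity so as to force $RC$ to be finite-dimensional over $C$, and finally to pin the dimension down to $4$ by excluding larger central simple algebras through a computation with matrix units. I extend $*$ and $d$ to $Q_s(R)$ as in the introduction and work inside the centrally closed prime ring $RC$, whose center is $C$ and which again carries the involution $*$. The target in both parts is that $RC$ is a central simple $C$-algebra of degree $2$: once $RC$ is known to be finite-dimensional it is central simple of dimension $m^2$, and since $R$ is noncommutative one has $m\ge 2$, so it suffices to show the degree is at most $2$, equivalently that $RC$ satisfies the standard identity $S_4$ (degree $\ge 3$ would violate $S_4$ by Amitsur--Levitzki).

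I would begin with part (ii) in the trace case $T(R)^2\subseteq Z(R)$, that is,
\[
[(x+x^*)(y+y^*),\,z]=0\qquad\text{for all }x,y,z\in R .
\]
This is a $*$-generalized polynomial identity. First I would separate the degenerate possibility $T(R)\subseteq Z(R)$, in which the identity is satisfied only because each factor is central; that case is disposed of by the classical Herstein-type result that a noncommutative prime ring whose trace (equivalently symmetric) elements are central already has $\dim_CRC=4$. In the remaining, genuinely nontrivial, case Martindale's theorem for rings with involution gives that $RC$ is primitive with nonzero socle $H$ and that $eHe$ is finite-dimensional over $C$ for a minimal idempotent $e$. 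Viewing $RC$ as a dense ring of linear transformations of a space over a division ring $\Delta=eHe$, and using the explicit transpose/symplectic forms of the involution recorded in Theorem \ref{thm5}, I would exhibit, whenever the local rank or $\deg\Delta$ exceeds $2$, a pair $s,t$ of symmetric elements with $st\notin Z(R)$, a contradiction. This forces $RC$ to have degree $2$, i.e. $\dim_CRC=4$. The skew case $K_0(R)^2\subseteq Z(R)$ is identical, with skew elements replacing symmetric ones and the usual minor characteristic-$2$ adjustments coming from $K_0\subseteq K$.

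For part (i) I would reduce to part (ii). Writing $d(T(R))\subseteq Z(R)$ as
\[
[\,d(x)+d(x^*),\,z\,]=0\qquad\text{for all }x,z\in R ,
\]
a differential identity with involution, I would apply Kharchenko's theorem in the form valid in the presence of an anti-automorphism to show that $d$ cannot be X-outer: if it were, both $d(x)$ and $d(x^*)$ would be replaced by indeterminates independent of each other and of $x,x^*$, reducing the identity to $[u_1+u_2,z]=0$ for free variables, hence to $[u_1,z]=0$, which would force $R$ commutative. Therefore $d$ is X-inner, say $d=\operatorname{ad}_b$ with $b\in Q_s(R)\setminus C$ since $d\ne 0$. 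Then the hypothesis reads $[b,\,x+x^*]\in C$ for all $x$, i.e. $[b,T(R)]\subseteq C$, which is again a nontrivial $*$-GPI in the single unknown $b$; feeding it into the same socle/matrix-unit analysis as in part (ii) yields $\dim_CRC=4$. The skew case is the mirror image.

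The main obstacle is twofold. First, making the reduction in part (i) rigorous requires the correct $*$-version of Kharchenko's differential-identity theorem and careful bookkeeping of the interaction between $d$ and $*$ (in particular whether $*$ is of the first or second kind, and whether the transpose derivation $x\mapsto d(x^*)^*$ remains X-outer); this is where errors tend to enter, rather than in any single estimate. Second, the concluding degree count--excluding $\dim_CRC>4$--is not formal: it needs the explicit involution forms of Theorem \ref{thm5} together with a separate treatment of the division-ring case, and it is precisely here that one must actually produce the offending symmetric (resp. skew) elements whose product escapes the center.
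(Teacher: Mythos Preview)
The paper does not prove this lemma at all; it simply cites \cite[Lemmas~1 and~5]{lee1995}. So there is no in-paper argument to compare your outline against.

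Your plan is the standard one and will go through, but one step is misdescribed. In part~(i) you assert that if $d$ is X-outer then in the $*$-Kharchenko reduction ``both $d(x)$ and $d(x^*)$ would be replaced by indeterminates independent of each other.'' That is not automatic. Writing $d(x^*)=(d^*(x))^*$ with $d^*:=*\circ d\circ *$, what governs the reduction is whether $d$ and $d^*$ are $C$-independent modulo X-inner derivations, not whether $d^*$ is X-outer (it always is when $d$ is). If they are independent you do get two free variables and the contradiction you wrote. If instead $d^*\equiv\alpha d$ modulo inner (forcing $\alpha\alpha^*=1$), then $d$ can remain X-outer and the identity reduces only to $[y+\alpha^* y^*,z]=0$ for all $y,z$; this yields $T(Q_s(R))\subseteq C$ or $K_0(Q_s(R))\subseteq C$ (or commutativity if $*$ is of the second kind), and you reach $\dim_CRC=4$ through the degenerate case rather than by forcing $d$ to be inner. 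So the argument closes, but not along the line ``therefore $d$ is X-inner.''

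As an aside, for $\text{char}\,R\neq 2$ there is a shorter route to~(i) that bypasses Kharchenko entirely: for $s\in T(R)$ one has $s^2\in T(R)$, so $2d(s)s\in Z(R)$, whence each $s\in T(R)$ satisfies $d(s)=0$ or $s\in Z(R)$; the union-of-subgroups trick then gives $T(R)\subseteq Z(R)$ or $d(T(R))=0$. In the second case $d(T(R)^2)=0$ with $T(R)^2$ a Lie ideal (Lemma~\ref{lem8}), so Lemma~\ref{lem2} forces $T(R)^2\subseteq Z(R)$ and part~(ii) finishes.
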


\begin{lem} (\cite[Lemma 5]{bergen1981})\label{lem2}
Let $R$ be a noncommutative prime ring, $\text{\rm char}\,R\ne 2$. If $d$ is a nonzero derivation of $R$,
and $U$ a Lie ideal of $R$ such that
$d(U) = 0$, then $U\subseteq Z(R)$.
\end{lem}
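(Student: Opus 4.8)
The plan is to extract from the hypothesis $d(U)=0$ a pointwise dichotomy --- for each $r\in R$, either $d(r)=0$ or $r$ centralizes $U$ --- and then to conclude by the elementary fact that a group is never the union of two proper subgroups. First I would note that, as $U$ is a Lie ideal, $[u,r]\in U$ for all $u\in U$ and $r\in R$; applying $d$ and using $d(u)=0$ gives $0=d([u,r])=[u,d(r)]$, so every element of $d(R)$ commutes with every element of $U$. Next, applying $d$ to $[u,rs]\in U$ and expanding by the Leibniz rules for $d$ and for the commutator, the summands $[u,d(r)]s$ and $r[u,d(s)]$ vanish by the previous remark, leaving
$$d(r)[u,s]+[u,r]d(s)=0\qquad\text{for all }r,s\in R,\ u\in U.\qquad(\star)$$

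Then I would replace $s$ by $sr$ in $(\star)$ and subtract $(\star)$ multiplied on the right by $r$; the outer terms cancel, leaving the sandwich identity
$$d(r)\,s\,[u,r]+[u,r]\,s\,d(r)=0\qquad\text{for all }r,s\in R,\ u\in U.\qquad(\star\star)$$
Fixing $r\in R$ and $u\in U$ and writing $a=d(r)$, $b=[u,r]$, this reads $asb+bsa=0$ for all $s\in R$. Here I would invoke (and, if needed, prove) the standard fact that in a prime ring of characteristic $\ne2$ such a relation forces $a=0$ or $b=0$: substituting $s\mapsto sbt$ gives $asbtb+bsbta=0$, which by $asb=-bsa$ and $atb=-bta$ rewrites as $2\,a\,s\,b\,t\,b=0$; as a prime ring of characteristic $\ne2$ is $2$-torsion free, $aRbRb=0$, and primeness then forces $a=0$ or $b=0$. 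Hence, for every $r\in R$, either $d(r)=0$ or $[u,r]=0$ for all $u\in U$.

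This is exactly the statement $R=\ker d\cup C_R(U)$, where $C_R(U):=\{r\in R\mid[u,r]=0\ \text{for all }u\in U\}$. Both $\ker d$ and $C_R(U)$ are additive subgroups of $R$, and a group is never the union of two proper subgroups; since $d\ne0$ we have $\ker d\ne R$, so $C_R(U)=R$, i.e.\ $U\subseteq Z(R)$. The hard part is the cancellation step ``$asb+bsa=0$ for all $s$ implies $a=0$ or $b=0$'' in a prime ring, which is precisely where $\text{\rm char}\,R\ne2$ enters essentially; the remainder is routine commutator bookkeeping together with the group-union observation. (When $R$ is commutative the conclusion is trivial, so noncommutativity is assumed only to make the statement non-vacuous.)
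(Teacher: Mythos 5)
Your proof is correct. Note that the paper itself offers no proof of this lemma --- it is quoted verbatim from Bergen--Herstein--Kerr \cite[Lemma 5]{bergen1981} --- so the only question is whether your argument stands on its own, and it does. Each step checks out: $[u,d(r)]=0$ follows from $d([u,r])=0$; linearizing via $[u,rs]$ gives $(\star)$; the substitution $s\mapsto sr$ correctly isolates the sandwich identity $(\star\star)$; and the cancellation $asb+bsa=0\Rightarrow 2\,asbtb=0\Rightarrow aRbRb=0$ is sound, with the passage from $2$-torsion to zero justified because the $2$-torsion elements of a prime ring of characteristic $\ne 2$ form an ideal annihilating the ideal $2R$, hence vanish. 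The final union-of-two-subgroups argument then cleanly yields $C_R(U)=R$, i.e.\ $U\subseteq Z(R)$. For comparison, the original argument in \cite{bergen1981} stops after your first observation $[U,d(R)]=0$ and invokes the known structural fact that the centralizer of a noncentral Lie ideal of a prime ring of characteristic $\ne 2$ is exactly $Z(R)$, so that $U\nsubseteq Z(R)$ would force $d(R)\subseteq Z(R)$ and hence $d=0$; your route replaces that quoted centralizer lemma with the explicit sandwich identity and a pointwise dichotomy, making the proof self-contained at the cost of a little more commutator bookkeeping.
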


We need a special case of Theorem \ref{thm5} in the proofs below.

\begin{lem}\label{lem7}
Let $R=\text{\rm M}_2(C)$, where $C$ is a field, with involution $*$. \vskip4pt

(i) If the involution $*$ is of the symplectic type, then
$\left[
\begin{array}{cc}
\alpha & \beta \\
\gamma & \delta%
\end{array}%
\right]^*=\left[
\begin{array}{cc}
\delta & -\beta \\
-\gamma & \alpha%
\end{array}%
\right]\in R.$
In this case, we have
$T(R)=C$, $K_0(R)=\{\left[
\begin{array}{cc}
\alpha & 2\beta \\
2\gamma & -\alpha%
\end{array}%
\right]\mid \alpha, \beta, \gamma\in C\}$, and $K_0(R)^2\subseteq C$ iff $\text{\rm char}\,R=2$.
In particular, if $\text{\rm char}\,R\ne 2$, then $K_0(R)=[R, R]$ and $K_0(R)^2=R$.\vskip4pt

(ii)\ If the involution $*$ is of the transpose type, then there exist an involution $-$ on $C$ and $\pi_1, \pi_2\in C\setminus \{0\}$, $\bar{\pi_i}=\pi_i$, $i=1, 2$, such that
$
\left[
\begin{array}{cc}
\alpha & \beta \\
\gamma & \delta%
\end{array}%
\right]^*=\left[
\begin{array}{cc}
\bar\alpha & \pi\bar\gamma \\
 \pi^{-1}\bar\beta & \bar\delta%
\end{array}%
\right],
$
where $\pi:=\pi_1^{-1}\pi_2$. In this case, we have
$$
T(R)=\{\left[
\begin{array}{cc}
\alpha +\bar\alpha & \gamma \\
\pi^{-1}\bar\gamma& \delta+\bar\delta%
\end{array}%
\right]\mid \alpha, \gamma, \delta\in C\}\ \text{\rm and}\ \ K_0(R)=\{\left[
\begin{array}{cc}
\alpha -\bar\alpha & \gamma \\
-\pi^{-1}\bar\gamma& \delta-\bar\delta%
\end{array}%
\right]\mid \alpha, \gamma, \delta\in C\}.
$$

(iii)\ $T(R)\subseteq C$ iff $*$ is of the symplectic type. In this case, $T(R)=C$.
\end{lem}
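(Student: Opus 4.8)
The plan is to reduce everything to the explicit descriptions of $*$ on $\text{\rm M}_n(D)$ supplied by Theorem \ref{thm5}, specialised to $n=2$ and $D=C$, and then compute. For part (i), if $*$ is of the symplectic type then Case 1 of Theorem \ref{thm5} applies with $n=2=2m$, forcing $m=1$, so that $R=\text{\rm M}_1(\text{\rm M}_2(C))$ and $*$ is exactly the map $\sigma$; this is the displayed formula. Writing $x=\left[\begin{smallmatrix}\alpha&\beta\\\gamma&\delta\end{smallmatrix}\right]$, one computes $x+x^*=(\alpha+\delta)I$ and $x-x^*=\left[\begin{smallmatrix}\alpha-\delta&2\beta\\2\gamma&\delta-\alpha\end{smallmatrix}\right]$, and letting the entries vary gives $T(R)=CI$ (identified with $C$) together with the asserted shape of $K_0(R)$. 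For the final clause I split on the characteristic. If $\text{\rm char}\,R=2$, then $2\beta=2\gamma=0$ and $-\alpha=\alpha$, so $K_0(R)=CI\subseteq C$ and $K_0(R)^2\subseteq C$ trivially. If $\text{\rm char}\,R\ne2$, then $2$ is invertible, so $K_0(R)$ is exactly the set of trace-zero matrices of $R$; this coincides with $[R,R]$, since the trace-zero matrices are spanned over $C=Z(R)$ by the commutators $[E_{11},E_{12}]=E_{12}$, $[E_{21},E_{11}]=E_{21}$, $[E_{12},E_{21}]=E_{11}-E_{22}$ (with $E_{ij}$ the matrix units), and from the products $(E_{11}-E_{22})^2=I$, $E_{12}E_{21}=E_{11}$, $E_{21}E_{12}=E_{22}$, $(E_{11}-E_{22})E_{12}=E_{12}$ one reads off $K_0(R)^2=R\nsubseteq C$. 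This proves the stated equivalence.

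For part (ii), if $*$ is of the transpose type then Case 2 of Theorem \ref{thm5} with $n=2$ gives an involution $-$ on $C$ and $\pi_1,\pi_2\in C\setminus\{0\}$ with $\overline{\pi_i}=\pi_i$ such that $(a_{ij})^*=\text{\rm Diag}(\pi_1^{-1},\pi_2^{-1})(\overline{a_{ij}})^t\,\text{\rm Diag}(\pi_1,\pi_2)$. Multiplying this out and using the commutativity of $C$ (a factor $\pi_i^{-1}(\,\cdot\,)\pi_i$ cancels on the diagonal, while $\pi_1^{-1}(\,\cdot\,)\pi_2$ and $\pi_2^{-1}(\,\cdot\,)\pi_1$ become multiplication by $\pi:=\pi_1^{-1}\pi_2$ and by $\pi^{-1}$) yields $\left[\begin{smallmatrix}\alpha&\beta\\\gamma&\delta\end{smallmatrix}\right]^*=\left[\begin{smallmatrix}\overline{\alpha}&\pi\overline{\gamma}\\\pi^{-1}\overline{\beta}&\overline{\delta}\end{smallmatrix}\right]$, the displayed formula. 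Then $x+x^*$ has diagonal entries $\alpha+\overline{\alpha},\ \delta+\overline{\delta}$ and off-diagonal entries $a:=\beta+\pi\overline{\gamma}$ and $b:=\gamma+\pi^{-1}\overline{\beta}$, and the key point is that these always satisfy $a=\pi\overline{b}$ (using $\overline{\pi^{-1}}=\pi^{-1}$), while any prescribed value of $a$ is attained (take $\gamma=0$, $\beta=a$). Hence the off-diagonal part of $T(R)$ is $\{(\gamma,\pi^{-1}\overline{\gamma}):\gamma\in C\}$, while the diagonal entries run over all pairs $(\alpha+\overline{\alpha},\delta+\overline{\delta})$; this is exactly the claimed description of $T(R)$, and the computation for $K_0(R)$ is identical except that the relation becomes $a=-\pi\overline{b}$.

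For part (iii), by Theorem \ref{thm5} the involution $*$ is of exactly one of the two types. If it is symplectic, part (i) gives $T(R)=C$, hence $T(R)\subseteq C$. If it is transpose, part (ii) shows $\left[\begin{smallmatrix}0&1\\\pi^{-1}&0\end{smallmatrix}\right]\in T(R)$ (take $\alpha=\delta=0$, $\gamma=1$), and this element is not scalar since its diagonal is zero while an off-diagonal entry is nonzero; hence $T(R)\nsubseteq C$. Therefore $T(R)\subseteq C$ iff $*$ is symplectic, and in that case $T(R)=C$.

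There is no serious obstacle: the substance is carried entirely by the cited Theorem \ref{thm5}, and the rest is direct computation. The only points requiring some care are the degeneracy in characteristic $2$ in part (i), where $K_0(R)$ collapses onto the scalar matrices, and, in part (ii), keeping track of which off-diagonal entry is taken as the free parameter so that the symmetry relation between the $(1,2)$- and $(2,1)$-entries is transcribed with the correct power of $\pi$ and the correct sign.
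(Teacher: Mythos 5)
Your proof is correct and follows exactly the route the paper intends: the lemma is stated without proof as a direct specialization of Theorem \ref{thm5} to $n=2$, $D=C$, followed by entrywise computation, which is precisely what you carry out. One small touch-up: in part (i) the products you list span only $I$, $E_{11}$, $E_{22}$, $E_{12}$, so to conclude $K_0(R)^2=R$ you should also record, say, $(E_{11}-E_{22})E_{21}=-E_{21}$ (this does not affect the \textquotedblleft iff\textquotedblright\ claim, for which $E_{12}\notin C$ already suffices).
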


\begin{lem}\label{lem3}
Let $R=\text{\rm M}_2(C)$, where $C$ is a field, with involution $*$, which is of the transpose type.
If $*$  is of the first kind, then the following hold (with notation in Lemma \ref{lem7}):

(i)\ $
T(R)=\{\left[
\begin{array}{cc}
2\alpha & \gamma \\
\pi^{-1}\gamma& 2\delta%
\end{array}%
\right]\mid \alpha, \gamma, \delta\in C\}$ and $K_0(R)=\{\left[
\begin{array}{cc}
0 & \gamma \\
-\pi^{-1}\gamma & 0%
\end{array}%
\right]\mid  \gamma\in C\};
$

(ii)\ $T(R)^2\subseteq C$ iff $\text{\rm char}\,R=2$. In this case, $T(R)^2=C$;

(iii)\ $K_0(R)^2=C$;

(iv)\ $1\in T(R)$ iff $\text{\rm char}\,R\ne 2$.
\end{lem}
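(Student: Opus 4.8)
The proof is a direct computation once the formulas of Lemma \ref{lem7}(ii) are specialised to the first-kind case. First I would note that for $R=\text{\rm M}_2(C)$ with a transpose-type involution $*$ the centre is $CI$ and $(\lambda I)^*=\bar\lambda I$, so $*$ is of the first kind exactly when the auxiliary involution $-$ on $C$ appearing in Lemma \ref{lem7}(ii) is the identity. Putting $\bar\alpha=\alpha$ in the descriptions of $T(R)$ and $K_0(R)$ given there, and using that for each fixed $\gamma$ the map $\beta\mapsto\beta\pm\pi\gamma$ is a bijection of $C$ (so the off-diagonal entry may be renamed), yields the normal forms of (i):
$$
T(R)=\left\{\begin{bmatrix}2\alpha & \gamma\\ \pi^{-1}\gamma & 2\delta\end{bmatrix}\mid \alpha,\gamma,\delta\in C\right\},\qquad
K_0(R)=\left\{\begin{bmatrix}0 & \gamma\\ -\pi^{-1}\gamma & 0\end{bmatrix}\mid \gamma\in C\right\}.
$$

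From (i) everything else follows by inspection and a single matrix product. For (iv): every element of $T(R)$ has both diagonal entries in $2C$, so $1\in T(R)$ forces $1\in 2C$, i.e.\ $\text{\rm char}\,R\ne 2$; conversely, if $\text{\rm char}\,R\ne 2$ the element with $\alpha=\delta=1/2$ and $\gamma=0$ is the identity matrix. For (ii) and (iii), multiply two generic elements. If $u,v\in T(R)$ are as in (i) and $\text{\rm char}\,R=2$, the diagonal entries vanish, $u$ and $v$ are anti-diagonal, and $uv$ is the scalar matrix $\pi^{-1}\gamma\gamma' I$; letting $\gamma,\gamma'$ range over $C$ shows $T(R)^2=C$. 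If $\text{\rm char}\,R\ne 2$, then $1\in T(R)$ by (iv), so $T(R)^2\supseteq T(R)$, which is not contained in $C$ (it contains, e.g., $\begin{bmatrix}0 & 1\\ \pi^{-1} & 0\end{bmatrix}$), proving the equivalence in (ii). For (iii) the same computation with two elements of $K_0(R)$ as in (i) gives the scalar matrix $-\pi^{-1}\gamma\gamma' I$, which exhausts $C$ as the parameters vary, so $K_0(R)^2=C$ in every characteristic.

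There is no substantive obstacle here; the only points needing care are the change of variables that brings $T(R)$ and $K_0(R)$ into the stated forms and the bookkeeping of the factor $2$ and of the scalar $\pi^{-1}$ (with its sign) in the off-diagonal positions, which is exactly what makes the characteristic-$2$ versus characteristic-$\ne 2$ dichotomy in (ii) and (iv) come out correctly.
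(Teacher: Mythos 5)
Your proof is correct and follows essentially the same route as the paper: read off (i) from Lemma \ref{lem7}(ii) by noting that first kind means the induced involution on $C$ is the identity, then verify (ii)--(iv) by direct matrix multiplication. The only (harmless) variation is in the converse direction of (ii), where the paper multiplies out one explicit pair of trace elements to force $2=0$, while you instead invoke (iv) to get $T(R)\subseteq T(R)^2$ and observe that $T(R)$ contains a non-scalar matrix; both are one-line computations.
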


\begin{proof}
(i) follows directly from Lemma \ref{lem7} (ii).

For (ii), if $\text{\rm char}\,R=2$, then
$T(R)=\{\left[
\begin{array}{cc}
0 & \gamma \\
\pi^{-1}\gamma& 0%
\end{array}%
\right]\mid \gamma\in C\}$ and so $T(R)^2=C$.
Conversely, assume that $T(R)^2\subseteq C$. Then
$$
\left[
\begin{array}{cc}
2 &1\\
\pi^{-1}& 2%
\end{array}%
\right]\left[
\begin{array}{cc}
0 & 1 \\
\pi^{-1}& 0%
\end{array}%
\right]=\left[
\begin{array}{cc}
\pi^{-1}&2\\
2\pi^{-1}& \pi^{-1}%
\end{array}%
\right]\in C,
$$
implying $2=0$, that is, $\text{\rm char}\,R=2$. Finally, (iii) and (iv) hold trivially.
\end{proof}

Let $R$ be a $4$-dimensional central division algebra with involution $*$, which is of the first kind, and  let $F$ be a maximal subfield of $R$.
Then, clearly, we have the canonical involution, denoted by the same $*$, on $R\otimes_{Z(R)}F$ as follows:
$
\big(\sum_ix_i\otimes \beta_i\big)^*=\sum_ix_i^*\otimes \beta_i,
$
where $x_i\in R$ and $\beta_i\in F$ for all $i$.

\begin{lem}\label{lem10}
Let $R$ be a $4$-dimensional central division algebra with involution $*$, which is of the first kind, and  let $F$ be a maximal subfield of $R$.
If the canonical involution $*$ on $R\otimes_{Z(R)}F$ is of the symplectic type, then so is the canonical involution $*$ on $R\otimes_{Z(R)}L$ for any maximal subfield $L$ of $R$.
\end{lem}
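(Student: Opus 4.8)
\emph{Proof proposal.} The plan is to show that the symplectic‑type condition on $R\otimes_{Z(R)}F$ is equivalent to a condition on $R$ and $*$ that makes no reference to the maximal subfield at all; the conclusion then follows because the same equivalence applies verbatim to $L$.

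Write $k:=Z(R)$, so $R$ is a quaternion algebra over $k$ and the canonical involution on $R\otimes_kF$ is well defined precisely because $*$ is of the first kind. Since $F$ is a maximal subfield of the division algebra $R$, the double centralizer theorem gives $[F:k]=2$ and $F$ splits $R$, so $R\otimes_kF\cong \text{\rm M}_2(F)$, a simple artinian ring (in particular a primitive ring, not a division ring, with nonzero socle, so Definitions~\ref{def1} and~\ref{def3} and Lemma~\ref{lem7} apply). The canonical involution fixes $Z(R\otimes_kF)=k\otimes_kF=1\otimes F$ pointwise, since $(1\otimes\beta)^*=1\otimes\beta$, so under the isomorphism it is an involution of the first kind on $\text{\rm M}_2(F)$. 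Hence Lemma~\ref{lem7}(iii), applied with $C=F$, tells us that the canonical involution $*$ on $R\otimes_kF$ is of the symplectic type if and only if $T(R\otimes_kF)\subseteq Z(R\otimes_kF)$.

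Next I would identify $T(R\otimes_kF)$. For $z=\sum_ix_i\otimes\beta_i$ we have $z+z^*=\sum_i(x_i+x_i^*)\otimes\beta_i$, and conversely, for $t=x+x^*\in T(R)$ and $\beta\in F$ one has $t\otimes\beta=(x\otimes\beta)+(x\otimes\beta)^*$; thus $T(R\otimes_kF)$ is exactly the $F$-span inside $R\otimes_kF$ of $\{t\otimes1\mid t\in T(R)\}$, i.e.\ the image of $T(R)\otimes_kF$. Now fix a $k$-basis $1=e_0,e_1,e_2,e_3$ of $R$; then $\{e_0\otimes1,\dots,e_3\otimes1\}$ is an $F$-basis of $R\otimes_kF$, while $1\otimes F=F(e_0\otimes1)$. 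Expanding each $t\in T(R)$ in the $e_j$ shows that $T(R\otimes_kF)\subseteq 1\otimes F$ if and only if every element of $T(R)$ has vanishing $e_1,e_2,e_3$-coordinates, that is, if and only if $T(R)\subseteq k=Z(R)$.

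Combining the two previous paragraphs: the canonical involution on $R\otimes_{Z(R)}F$ is of the symplectic type \emph{if and only if} $T(R)\subseteq Z(R)$, a condition in which $F$ does not appear. Since $L$ is also a maximal subfield of $R$, the identical reasoning (with $L$ in place of $F$) shows that the canonical involution on $R\otimes_{Z(R)}L$ is of the symplectic type if and only if $T(R)\subseteq Z(R)$. Therefore, if the hypothesis holds for $F$ it holds for $L$, proving the lemma. The only steps needing care are the standard facts that a maximal subfield of a quaternion division algebra has degree $2$ and splits the algebra, and the bookkeeping identifying $T(R\otimes_kF)$ with the image of $T(R)\otimes_kF$; the latter identification is where a careless argument could slip, but it is routine once a $k$-basis of $R$ is fixed, so I do not anticipate a genuine obstacle.
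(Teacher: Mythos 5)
Your proposal is correct and follows essentially the same route as the paper: both reduce the symplectic-type condition on $R\otimes_{Z(R)}F$ to the subfield-independent condition $T(R)\subseteq Z(R)$ via the identification $T(R\otimes_{Z(R)}F)=T(R)\otimes_{Z(R)}F$ and Lemma \ref{lem7}. You simply supply more detail (the splitting $R\otimes_kF\cong\text{\rm M}_2(F)$ and the basis bookkeeping) than the paper's terse version.
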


\begin{proof}
Note that $R\otimes_{Z(R)}F\cong \text{\rm M}_2(F)$, and that
$T(R\otimes_{Z(R)}F)=T(R)\otimes_{Z(R)}F$. Therefore, we have $T(R\otimes_{Z(R)}F)\subseteq F$ iff $T(R)\subseteq Z(R)$.
By Lemma \ref{lem7}, the canonical involution of  $R\otimes_{Z(R)}F$ is of the symplectic type iff $T(R)\subseteq Z(R)$.
Since the condition that $T(R)\subseteq Z(R)$ is independent of the choice of any maximal subfield of $R$, this completes the proof.
\end{proof}

By Lemma \ref{lem10} and its proof, we are ready to give the following. \vskip6pt

\begin{defi}\label{def4}
  Let $R$ be a $4$-dimensional central division algebra with involution $*$, which is of the first kind. 
  The involution $*$ on $R$ is said to be of the symplectic (resp. transpose) type if $T(R)\subseteq Z(R)$ (resp. $T(R)\nsubseteq Z(R)$).
\end{defi}


In Definition \ref{def4}, if $T(R)\subseteq Z(R)$, then the involution $*$ is of the first kind. This fact holds for any noncommutative prime ring $R$ with involution $*$. Indeed, suppose that $\beta\ne \beta^*$ for some $\beta\in C$. There exists a nonzero ideal $I$ of $R$ such that $\beta I\subseteq R$.
For $x, y\in I$, we have
$
\big[\beta x+\beta^*x^*, y\big]=0
$
and
$
\big[\beta^*x+\beta^*x^*, y\big]=0,
$
implying that
$
(\beta^*-\beta)[x, y]=0.
$
Thus $[x, y]=0$ for all $x, y\in I$, implying that $R$ is commutative, a contradiction. This proves that $*$ is of the first kind.

We can also define the symplectic or transpose type of the first kind involution $*$ on a finite-dimensional central division algebra but Definition \ref{def4} is sufficient for our purpose.\vskip6pt

\begin{lem}\label{lem6}
Let $R$ be a $4$-dimensional central division algebra with involution $*$, which is of the first kind.

\noindent Case 1:\ $*$ is of the transpose type. Then the following hold:

(i)\ $T(R)^2\subseteq Z(R)$ iff $\text{\rm char}\,R=2$;

(ii)\ $K_0(R)^2=Z(R)$.

\noindent Case 2:\ $*$ is of the symplectic type. Then $T(R)=Z(R)$, and $K_0(R)^2\subseteq Z(R)$ iff $\text{\rm char}\,R=2$.
\end{lem}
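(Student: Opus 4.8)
The plan is to reduce everything to the split case $\text{\rm M}_2(F)$, where Lemmas \ref{lem7} and \ref{lem3} do all the computation, via base change to a maximal subfield followed by a short descent argument. Fix a maximal subfield $F$ of $R$. Since $R$ is a $4$-dimensional central division algebra, $[F:Z(R)]=2$, $F$ splits $R$, and $R\otimes_{Z(R)}F\cong\text{\rm M}_2(F)$ (as in the proof of Lemma \ref{lem10}); equip the latter with the canonical involution, which is of the first kind since it fixes $1\otimes F$ pointwise, and which has the same characteristic as $R$. Because $*$ fixes $Z(R)$, each of $T(R)$, $K_0(R)$, $T(R)^2$, $K_0(R)^2$ is a $Z(R)$-subspace of $R$, and (checking on elementary tensors) $T(R)\otimes_{Z(R)}F=T(\text{\rm M}_2(F))$ and $K_0(R)\otimes_{Z(R)}F=K_0(\text{\rm M}_2(F))$, hence also $T(R)^2\otimes_{Z(R)}F=T(\text{\rm M}_2(F))^2$ and $K_0(R)^2\otimes_{Z(R)}F=K_0(\text{\rm M}_2(F))^2$; in particular the $Z(R)$-dimension of each of these subspaces equals the $F$-dimension of its image. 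Moreover, for any $Z(R)$-subspace $V\subseteq R$ one has $V\subseteq Z(R)$ iff $V\otimes_{Z(R)}F\subseteq F\cdot 1$: one direction is trivial, and for the other, $[v,r]\otimes 1=[v\otimes 1,\,r\otimes 1]=0$ together with the embedding $R\hookrightarrow R\otimes_{Z(R)}F$ forces $[v,r]=0$.

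Next I would transfer the type. By Definition \ref{def4}, $*$ on $R$ is of the transpose type iff $T(R)\nsubseteq Z(R)$, iff $T(\text{\rm M}_2(F))\nsubseteq F$ (by the descent fact above), iff the canonical involution on $\text{\rm M}_2(F)$ is of the transpose type (by Lemma \ref{lem7}(iii)); likewise for the symplectic type. So in Case 1 the involution on $\text{\rm M}_2(F)$ is of the transpose type and of the first kind, and Lemma \ref{lem3} applies: $T(\text{\rm M}_2(F))^2\subseteq F$ iff $\text{\rm char}\,R=2$, and $K_0(\text{\rm M}_2(F))^2=F$. Pulling these back along the base change, $T(R)^2\subseteq Z(R)$ iff $T(R)^2\otimes_{Z(R)}F\subseteq F$ iff $\text{\rm char}\,R=2$, which is (i); and $K_0(R)^2\otimes_{Z(R)}F=F=Z(\text{\rm M}_2(F))$ forces $K_0(R)^2\subseteq Z(R)$ with $\dim_{Z(R)}K_0(R)^2=1$, whence $K_0(R)^2=Z(R)$, which is (ii). In Case 2 the involution on $\text{\rm M}_2(F)$ is symplectic, so Lemma \ref{lem7}(i) gives $T(\text{\rm M}_2(F))=F$ and $K_0(\text{\rm M}_2(F))^2\subseteq F$ iff $\text{\rm char}\,R=2$; the first yields $T(R)\subseteq Z(R)$ with $\dim_{Z(R)}T(R)=1$, i.e.\ $T(R)=Z(R)$, and the second yields $K_0(R)^2\subseteq Z(R)$ iff $\text{\rm char}\,R=2$.

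I do not expect a genuine obstacle here: the arithmetic all sits in Lemmas \ref{lem7} and \ref{lem3}, and the new content is the bookkeeping of the base change. The one place to be careful is making that descent precise — that the $F$-span in $\text{\rm M}_2(F)$ of a $Z(R)$-subspace of $R$ really is its base change, that base change commutes with forming $T(\cdot)$, $K_0(\cdot)$ and with multiplying subspaces, and that centrality of a subspace is detected after base change — since both the dimension counts and the ``$\text{\rm char}\,R=2$'' equivalences rely on it. (In a couple of spots one can sidestep the $\dim$-count: e.g.\ in Case 1, if $\text{\rm char}\,R\ne 2$ then $1\in T(R)$, so $T(R)\subseteq T(R)^2$, and $T(R)\nsubseteq Z(R)$ gives $T(R)^2\nsubseteq Z(R)$ directly.)
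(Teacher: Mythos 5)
Your proposal is correct and follows essentially the same route as the paper: base change to a maximal subfield to reduce to $\text{\rm M}_2(F)$, observe that $T(\cdot)$, $K_0(\cdot)$ and their squares commute with this extension, and then invoke Lemmas \ref{lem7} and \ref{lem3}. The paper simply states the compatibility $\widetilde T^2=T(R)^2\otimes L$ etc.\ without the descent bookkeeping you spell out, but the content is identical.
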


\begin{proof}
Let $\widetilde R:=R\otimes_{Z(R)}L$, where $L$ is a maximal subfield of $R$, $\widetilde T:=T(\widetilde R)$, and ${\widetilde K}_0:=K_0(\widetilde R)$.
Then $*$ is of the first kind on $\widetilde R$. Clearly, $\widetilde T=T(R)\otimes L$ and $\widetilde  {K}_0=K_0(R)\otimes L$. Moreover, we have
$
{\widetilde T}^2=T(R)^2\otimes L
$
and
$
{\widetilde K_0}^2={K_0}(R)^2\otimes L.
$

Case 1:\ $*$ is of the transpose type. Since $*$ is of the transpose type on $\widetilde R$, it follows from Lemma \ref{lem3} (ii) that
$
{\widetilde T}^2\subseteq L\ \text{\rm iff}\ \ \text{\rm char}\,R=2.
$
Moreover, by  Lemma \ref{lem3} (iii) we have ${\widetilde K_0}^2=L$. That is, $T(R)^2\subseteq Z(R)$ iff $\text{\rm char}\,R=2$. Thus (i) is proved. We also have $K_0(R)^2=Z(R)$, proving (ii).

Case 2:\ $*$ is of the symplectic type. It follows directly from Lemma \ref{lem7} (i).
\end{proof}

Let $R$ be a prime ring, and let $a, b\in Q_s(R)$. If $aRb=0$, then either $a=0$ or $b=0$.
Indeed, there exists a nonzero ideal $I$ of $R$ such that $aI, Ib\subseteq R$. Then $aIRIb=0$. The primeness of $R$ implies that either $aI=0$ or $Ib=0$. That is, either $a=0$ or $b=0$.

\begin{lem}\label{lem4}
Let $R$ be a noncommutative prime ring with involution $*$, and let $I$ be a nonzero ideal of $R$. If $aT(I)=0$ (resp. $aK_0(I)=0$) where $a\in Q_s(R)$, then $a=0$.
\end{lem}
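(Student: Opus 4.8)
The plan is to prove $aT(I)=0$ (with $a\in Q_s(R)$) forces $a=0$ by reducing to a symmetrization argument inside the ideal $I$. First I would note that for any $x\in I$ we have $x+x^*\in T(I)$, so $a(x+x^*)=0$ for all $x\in I$; the obstacle is that $x^*$ need not lie in $I$, so I should work with a $*$-invariant ideal. Replacing $I$ by the nonzero ideal $I\cap I^*$ (which is nonzero because $R$ is prime and $I^*$ is a nonzero ideal, so $II^*\ne 0$), I may assume $I^*=I$. Then $T(I)\supseteq\{x+x^*:x\in I\}$ and, crucially, for $x\in I$ also $x^*\in I$, so both $x$ and $x^*$ are available as elements on which $a$ acts through $T(I)$.

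Next, from $a(x+x^*)=0$ for all $x\in I$, I would substitute $x$ by products. Taking $x=ry$ with $r\in I$, $y\in I$ gives $a(ry+y^*r^*)=0$; and using $x=yr$ gives $a(yr+r^*y^*)=0$. I also have $a(r+r^*)=0$ and $a(y+y^*)=0$. The goal is to manufacture an identity of the form $a\,u\,(\text{something})(\text{something})=0$ with a free element $u$ ranging over $I$, so that primeness can be invoked via the remark preceding the lemma ($aRb=0\Rightarrow a=0$ or $b=0$ in $Q_s(R)$). Concretely, from $a x^*=-ax$ for all $x\in I$ (writing this as the operative relation), replacing $x$ by $xr$ for $r\in I$ yields $a r^* x^* = -a x r$; but $a r^* = -a r + a(r+r^*)$, and since $a(r+r^*)=0$ we get $a r^* = -ar$, hence $-a r x^* = -a x r$, i.e. $a(r x^* - x r)=0$ for all $r,x\in I$. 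Now replace $x$ by $x^*$ (legitimate since $I^*=I$): $a(r x - x^* r)=0$; adding the two, $a\big(r(x+x^*)-(x+x^*)r\big)=0$, which is automatic, so instead I subtract or iterate: from $a(rx^*-xr)=0$, right-multiply structurally by another element or substitute $r\mapsto rs$ to peel off $a$.

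The cleanest route: from $a r x^* = a x r$ for all $r,x\in I$, fix $x$ and let $r$ vary; replace $r$ by $sr$ ($s\in I$): $a s r x^* = a x s r$. But also $a s r x^* = (a s r) x^* $ and by the original identity with $r\mapsto sr$ we directly have $a(sr)x^* = ax(sr)$, consistent. Instead substitute $x\mapsto xs$: $a r (xs)^* = a (xs) r$, i.e. $a r s^* x^* = a x s r$. Using $a r s^* = a r(s^* - s) + a r s$ and the fact that $s^*-s = -(s - s^*)\in K_0(I)$ — here I would note we only assumed $aT(I)=0$, not $aK_0(I)=0$, so I cannot kill $a r(s^*-s)$ directly; this is the main obstacle. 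To circumvent it, I would instead expand $T$ of a product: $ab + (ab)^* = ab + b^*a^* \in T(I)$ for $a,b\in I$, so $a(xy + y^*x^*)=0$ and $a(yx + x^*y^*)=0$ for all $x,y\in I$. Subtracting: $a(xy - x^*y^* + y^*x^* - yx) = 0$. Combined with $ax^* = -ax$ (apply $a$ to $T$, treating $ax^* = a(x+x^*) - ax = -ax$), we substitute $ax^*y^* = a(x^*)(y^*)$; since $a x^* = -ax$ as elements multiplied on the left, $a x^* y^* = -a x y^*$, and I would iterate to reduce everything to the form $a x y = $ (mixed terms), eventually isolating $a\cdot I\cdot I = 0$ or $a I a' = 0$, whence $a=0$ by primeness. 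The skew case $aK_0(I)=0$ is handled identically with signs flipped, using $a(x-x^*)=0$, i.e. $ax^* = ax$ on the left. I expect the sign bookkeeping in forcing the free variable to the outside to be the only real subtlety; the structural mechanism (expand $T$ or $K_0$ of products, use the left-multiplication relation $ax^* = \mp ax$ to collapse stars, invoke primeness of $Q_s(R)$) is routine once set up correctly.
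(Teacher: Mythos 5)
Your basic mechanism --- feed products of elements of $I$ into the hypothesis $aT(I)=0$ and use the left relation $ax^*=-ax$ to collapse stars --- is exactly the engine of the paper's proof, but the write-up stops short of an actual proof at the decisive moment. You correctly derive $a(rx^*-xr)=0$ for $r,x\in I$, and then observe that adding this to its $x\mapsto x^*$ version yields $a\bigl(r(x+x^*)-(x+x^*)r\bigr)=0$; you dismiss this as ``automatic'' and abandon it. It is not automatic: only the term $a(x+x^*)r$ vanishes for free, so the identity says precisely $ar(x+x^*)=0$ for all $r,x\in I$, i.e.\ $a\,I\,T(I)=0$ --- which is the primeness-ready identity you were hunting for (since $I$ is an ideal, $(au)Rt\subseteq a\,I\,T(I)=0$ for $u\in I$ and $t\in T(I)$, and primeness of $Q_s(R)$ finishes). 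Instead, the remainder of your argument trails off into ``iterate \dots\ eventually isolating $aII=0$ or $aIa'=0$, whence $a=0$'', without ever producing such an identity; as written the proof is incomplete, and I see no route to $aII=0$ from your relations, since everything you can extract carries a factor from $T(I)$ or a commutator.

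Second, and independently, you never use the hypothesis that $R$ is noncommutative, which cannot be right: for a commutative domain of characteristic $2$ with the identity involution one has $T(I)=0$ and the statement fails. Any identity of the shape $aRT(I)=0$ or $a\,I\,T(I)=0$ yields $a=0$ only after one rules out $T(I)=0$; the paper does this by noting that $T(I)=0$ forces $x^*=-x$ on $I$, hence $xy=-yx$ for $x,y\in I$, hence $R$ commutative, a contradiction. This case must be included. For comparison, the paper's route is shorter and avoids your reduction to $I=I^*$ entirely: for $x\in R$ and $y\in I$ both $xy$ and $yx^*$ lie in $I$, so $axy=-ay^*x^*=ayx^*=-axy^*$, giving $aRT(I)=0$ in one line.
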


\begin{proof}
Assume that $aT(I)=0$, where $a\in Q_s(R)$. Given $x\in R$ and $y\in I$, we have
$$
axy=-a(xy)^*=-ay^*x^*=ayx^*=-axy^*.
$$
That is, $aRT(I)=0$ and so either $a=0$ or $T(I)=0$. Assume that $T(I)=0$.
Then, for $x, y\in I$, we have $xy=-y^*x^*=-yx$ and so $xy+yx=0$. It is easy to prove that $R$ is commutative, a contradiction. Hence $a=0$ follows.
The case $aK_0(I)=0$ has a similar argument by a slight modification.
\end{proof}

\begin{lem}\label{lem8}
Let $R$ be a ring with involution $*$. Then $T(R)^2$ and $K_0(R)^2$ are Lie ideals of $R$.
\end{lem}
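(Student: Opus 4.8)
The plan is to verify the defining property of a Lie ideal directly. Since $T(R)^2$ is, by the convention of the paper, the additive subgroup generated by the products $ab$ with $a,b\in T(R)$, and since $x\mapsto[x,r]$ is additive for each $r\in R$, it suffices to show that $[ab,r]\in T(R)^2$ for all $a,b\in T(R)$ and all $r\in R$ (and likewise with $K_0$ in place of $T$). I would note at the outset the elementary fact that every element of $T(R)$ is symmetric and every element of $K_0(R)$ is skew, so that for $a\in T(R)$ one has $a^*=a$, while for $a\in K_0(R)$ one has $a^*=-a$.

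The key step is a symmetrization that absorbs an arbitrary $r\in R$ into a trace (resp.\ skew trace). Concretely, for $a,b\in T(R)$ and $r\in R$, the elements $ar+r^*a=ar+(ar)^*$ and $rb+br^*=rb+(rb)^*$ lie in $T(R)$; while for $a,b\in K_0(R)$ the same two expressions equal $ar-(ar)^*$ and $rb-(rb)^*$, hence lie in $K_0(R)$. Therefore, in both cases, $a(rb+br^*)$ and $(ar+r^*a)b$ lie in $T(R)^2$, respectively $K_0(R)^2$, and expanding the difference gives
$$
a(rb+br^*)-(ar+r^*a)b=arb+abr^*-arb-r^*ab=abr^*-r^*ab=[ab,r^*].
$$
Since $*$ is surjective, $r^*$ ranges over all of $R$ as $r$ does, so $[ab,s]\in T(R)^2$ (resp.\ $K_0(R)^2$) for every $s\in R$. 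Summing over the additive generators of $T(R)^2$ (resp.\ $K_0(R)^2$) then yields $[T(R)^2,R]\subseteq T(R)^2$ and $[K_0(R)^2,R]\subseteq K_0(R)^2$, which is the assertion.

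I do not anticipate any genuine obstacle here: the single identity displayed above proves both statements at once, the only difference between the two cases being the sign in $x^*=\pm x$, which is exactly what is used to place $ar+r^*a$ and $rb+br^*$ in $T(R)$ versus $K_0(R)$. No assumption on $\operatorname{char}R$, nor primeness of $R$, nor any structure theory or choice of idempotents is required. The only point demanding a little care is the bookkeeping of which factor plays the role of the (skew) trace and which plays the role of the arbitrary ring element when invoking the definition of $T(R)^2$ (resp.\ $K_0(R)^2$) as an additive subgroup generated by products.
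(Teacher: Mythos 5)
Your proof is correct and uses essentially the same argument as the paper: writing the commutator of a product of two (skew) traces with a ring element as a difference of products of two (skew) traces. The only cosmetic difference is that your decomposition produces $[ab,r^*]$ and then appeals to the bijectivity of $*$, whereas the paper's identity $[t_1t_2,x]=t_1(t_2x+x^*t_2)-(t_1x^*+xt_1)t_2$ yields $[t_1t_2,x]$ directly.
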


\begin{proof}
We only prove that $T(R)^2$ is a Lie ideal of $R$. The skew case has almost the same argument.
Let $t_1, t_2\in T(R)$ and $x\in R$. Then
$$
[t_1t_2, x]=t_1(t_2x+x^*t_2)-(t_1x^*+xt_1)t_2\in T^2.
$$
This proves that $T(R)^2$ is a Lie ideal of $R$.
\end{proof}

\begin{lem}\label{lem5}
Let $R$ be a ring.

(i)\ If $A$ is an additive subgroup of $R$, then $[A, R]=[\overline A, R]$.

(ii)\ If $L$ is a Lie ideal of $R$, then $R[L, L]R\subseteq L+L^2$ and $\big[R[L, L]R, R\big]\subseteq L$ (see \cite[Lemma 2.1]{lee2022}).

(iii)\ If $R$ is a prime ring and if $\big[a, [R, R]\big]=0$ where $a\in Q_s(R)$, then $a\in C$ (see \cite[Lemma 1.5]{herstein1969}).

(iv)\ Let $R$ be a prime ring, and $U$ and $V$ two Lie ideals of $R$. If $[U, V]\subseteq Z(R)$, then either $U\subseteq Z(R)$ or $V\subseteq Z(R)$
except when $R$ is exceptional (\cite[Lemma 7]{lanski1972}).
\end{lem}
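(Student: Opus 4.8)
The plan is to treat part (i) as the only statement requiring an actual argument, since parts (ii), (iii) and (iv) are quoted verbatim from \cite{lee2022}, \cite{herstein1969} and \cite{lanski1972} respectively; for those I would simply record the attributions already displayed in the statement and invoke them directly. So the real work is to establish the identity $[A, R]=[\overline A, R]$ of additive subgroups.

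For part (i) one inclusion is immediate: since $A\subseteq \overline A$ we have $[a, x]\in [\overline A, R]$ for every $a\in A$ and $x\in R$, whence $[A, R]\subseteq [\overline A, R]$. For the reverse inclusion I would first use additivity of the commutator in its left slot: $\overline A$ is spanned as an additive group by products $a_1a_2\cdots a_n$ with $a_i\in A$, and $[\,\cdot\,, x]$ is additive, so it suffices to show that $[a_1\cdots a_n, x]\in [A, R]$ for every such product and every $x\in R$. This I would prove by induction on the length $n$, the base case $n=1$ being trivial.

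The engine of the induction is the commutator identity
\[
[ab, x]=[a, bx]+[b, xa],
\]
valid in any ring, as one checks by expanding both sides to $abx-xab$. To run the inductive step I set $a:=a_1\in A$ and $b:=a_2\cdots a_n$, a product of length $n-1$. Then $[a, bx]\in [A, R]$ because $a\in A$ and $bx\in R$, while $[b, xa]\in [A, R]$ by the induction hypothesis applied to the shorter product $b$ against the ring element $xa$; adding these gives $[a_1\cdots a_n, x]\in [A, R]$, completing the induction. (The inductive statement must be phrased as ``$[w, y]\in [A, R]$ for every product $w$ of length $<n$ and every $y\in R$'' so that it applies to the element $xa$.)

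I expect the only real obstacle to be spotting the right identity: the naive Leibniz expansion $[ab, x]=a[b, x]+[a, x]b$ produces one-sided products such as $a[b, x]$ that need not lie in the additive group $[A, R]$, so it is useless here. The symmetric identity above is what converts a commutator of a product into a sum of honest commutators $[a, bx]$ and $[b, xa]$ each having a factor in $A$ (resp.\ in a shorter product), and once it is in hand the argument is a routine induction. After (i) is settled, I would close the proof by citing \cite[Lemma 2.1]{lee2022} for (ii), \cite[Lemma 1.5]{herstein1969} for (iii), and \cite[Lemma 7]{lanski1972} for (iv).
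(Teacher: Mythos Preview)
Your proposal is correct and matches the paper's approach exactly: the paper also proves only (i), by the same induction on the length of products using the identity $[a_1a_2\cdots a_n, x]=[a_2\cdots a_n, xa_1]+[a_1, a_2\cdots a_nx]$, and simply cites the stated references for (ii)--(iv).
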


We only give the short proof of (i).  Let $a_1,\ldots,a_n\in A$ and $x\in R$. By induction on $n$, we get
\begin{equation*}
[a_1a_2\cdots a_n, x]=[a_2\cdots a_n, xa_1]+ [a_1, a_2\cdots a_nx]\in [A, R].
\end{equation*}
\vskip6pt

\noindent {\bf The Skolem-Noether Theorem.}\ {\it Let $R$ be a finite-dimensional central simple algebra. Then every derivation $d$ of $R$ satisfying $d(Z(R))=0$ is inner.}\vskip6pt

\noindent {\bf The Brauer-Cartan-Hua Theorem.}\ {\it Let $R$ be a division ring, and let $A$ be a subdivision ring of $R$. If $A$ is invariant under all inner automorphisms, then either $A\subseteq Z(R)$ or $A=R$.}\vskip6pt

\section{Theorem A}
In this section, except Lemma \ref{lem9}, we always assume that
{\it $R$ is a prime ring with involution $*$}.
The aim of this section is to characterize $[T(R), T(R)]=0$.

\begin{lem}\label{lem26}
Let $d$ be a nonzero derivation of $R$. If
$d(\beta)=0$ for all $\beta=\beta
^*\in C$, then $d(C)=0$.
\end{lem}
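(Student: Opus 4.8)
The plan is to pass to the unique extension of $d$ to a derivation of $Q_s(R)$ (so that it makes sense to evaluate $d$ on $C$) and then to verify $d(\alpha)=0$ directly for \emph{each} $\alpha\in C$, distinguishing according to whether $\alpha$ is symmetric. If $\alpha^*=\alpha$, then $\alpha$ is a symmetric element of $C$ and $d(\alpha)=0$ is exactly the hypothesis; so the content of the lemma is entirely in the case $\alpha^*\ne\alpha$, and in particular the lemma is automatic when $*$ is of the first kind.

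For such an $\alpha$ I would introduce its ``trace'' $\beta:=\alpha+\alpha^*$ and its ``norm'' $\gamma:=\alpha\alpha^*$. Since $*$ is an involutive anti-automorphism one checks at once that $\beta^*=\beta$ and $\gamma^*=\gamma$, so $\beta,\gamma$ are symmetric elements of $C$, whence $d(\beta)=d(\gamma)=0$ by hypothesis. Moreover, because $\alpha$ lies in $C$ and hence commutes with $\alpha^*$, one has the quadratic relation $\alpha^2-\beta\alpha+\gamma=\alpha^2-(\alpha+\alpha^*)\alpha+\alpha\alpha^*=0$. Applying $d$ to $\alpha^2-\beta\alpha+\gamma=0$ and using that $\alpha,\beta,\gamma$ are central in $Q_s(R)$ while $d(\beta)=d(\gamma)=0$, the Leibniz rule collapses everything to $(2\alpha-\beta)\,d(\alpha)=0$, that is, $(\alpha-\alpha^*)\,d(\alpha)=0$.

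Now $\alpha-\alpha^*$ is a nonzero element of the field $C$, hence invertible there and central in $Q_s(R)$; multiplying $(\alpha-\alpha^*)\,d(\alpha)=0$ by its inverse gives $d(\alpha)=0$. Combining the two cases, $d(\alpha)=0$ for every $\alpha\in C$, i.e. $d(C)=0$.

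I do not expect a genuine obstacle here. The only point worth flagging is that the key identity $2\alpha-\beta=\alpha-\alpha^*$ and the invertibility of $\alpha-\alpha^*$ both hold irrespective of the characteristic, so the argument is uniform and in particular handles the characteristic-two, second-kind situation with no extra work. (One could also first remark that $[\alpha,x]=0$ forces $[d(\alpha),x]=0$, so that $d(C)\subseteq C$, but this is not needed for the computation above.)
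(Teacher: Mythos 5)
Your proposal is correct and follows essentially the same route as the paper: both exploit that $\mu+\mu^*$ and $\mu\mu^*$ are symmetric elements of $C$ killed by $d$, and both reduce to the identity $(\mu-\mu^*)d(\mu)=0$ in the field $C$. Whether one reaches that identity via the quadratic relation $\mu^2-(\mu+\mu^*)\mu+\mu\mu^*=0$ or, as the paper does, via $d(\mu^*)=-d(\mu)$ together with the Leibniz rule on $\mu\mu^*$, is only a cosmetic difference.
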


\begin{proof}
Let $\mu\in C$. Then $d(\mu+\mu^*)=0$ and $d(\mu\mu^*)=0$, implying that $(\mu-\mu^*)d(\mu)=0$.
Thus either $\mu=\mu^*$ or $d(\mu)=0$. In either case, we get $d(\mu)=0$. That is, $d(C)=0$.
\end{proof}

\begin{lem}\label{lem12}
Let $R$ be noncommutative, and let $d$ be a nonzero derivation of $R$.
Suppose that either $d(T(R))=0$ or $d(K_0(R))=0$. Then $*$ is of the first kind.
\end{lem}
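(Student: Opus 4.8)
The plan is to prove the statement under the hypothesis $d(T(R))=0$ and then to observe that the case $d(K_0(R))=0$ follows by the same argument with the obvious sign changes (and is literally the same statement when $\operatorname{char}R=2$, since then $T(R)=K_0(R)$). I will use freely that for each $\beta\in C$ there is a nonzero ideal $I$ of $R$ with $\beta I\subseteq R$, and that $d$ extends to a derivation of $Q_s(R)$, so that $d(\beta)$ makes sense for $\beta\in C$ (cf.\ Lemma~\ref{lem26}).

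First I would show $d(C)=0$. Given a symmetric element $\beta=\beta^*\in C$, choose a nonzero ideal $I$ with $\beta I\subseteq R$; for $x\in I$ the element $\beta(x+x^*)$ equals $(\beta x)+(\beta x)^*$ and hence lies in $T(R)$, so $0=d\bigl(\beta(x+x^*)\bigr)=d(\beta)(x+x^*)+\beta\, d(x+x^*)=d(\beta)(x+x^*)$. Thus $d(\beta)\,T(I)=0$, and Lemma~\ref{lem4} forces $d(\beta)=0$. Since this holds for every symmetric $\beta\in C$, Lemma~\ref{lem26} gives $d(C)=0$.

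Next, assuming $*$ is of the second kind, I would derive $d=0$, a contradiction. Pick $\beta\in C$ with $\beta\ne\beta^*$; since $C$ is a field, $\beta-\beta^*$ is invertible. Applying $d$ to $x+x^*\in T(R)$ gives $d(x^*)=-d(x)$ for all $x\in R$. Choosing a nonzero ideal $I$ with $\beta I\subseteq R$, for $x\in I$ the element $\beta x+\beta^*x^*=(\beta x)+(\beta x)^*$ lies in $T(R)$, so, using $d(C)=0$ and $d(x^*)=-d(x)$, $0=d(\beta x+\beta^*x^*)=\beta\, d(x)+\beta^*d(x^*)=(\beta-\beta^*)d(x)$. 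Hence $d(x)=0$ for all $x\in I$, i.e.\ $d(I)=0$; the usual argument (for $x\in R$ and $y\in I$, $0=d(xy)=d(x)y$, so $d(x)I=0$ and $d(x)=0$ by primeness) then yields $d=0$, contradicting $d\ne0$. Therefore $*$ is of the first kind.

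I do not expect a genuine obstacle here; the only points requiring care are that $\beta$ must be taken \emph{symmetric} in the first step (so that $\beta(x+x^*)\in T(R)$) while it may be arbitrary in the second, and that $d(C)=0$ must be established \emph{before} the final computation --- otherwise the cross terms $d(\beta)x+d(\beta^*)x^*$ survive and block the conclusion. For the $K_0$ version one repeats the argument with $x-x^*$ in place of $x+x^*$, with $K_0(I)$ in place of $T(I)$ (Lemma~\ref{lem4} again), and with $d(x^*)=d(x)$ in place of $d(x^*)=-d(x)$; the computations are identical, ending again at $(\beta-\beta^*)d(x)=0$.
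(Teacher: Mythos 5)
Your proposal is correct and follows essentially the same route as the paper: first establish $d(\beta)=0$ for symmetric $\beta\in C$ via Lemma~\ref{lem4} and then invoke Lemma~\ref{lem26} to get $d(C)=0$, and finally derive $(\beta-\beta^*)d(x)=0$ on a suitable ideal to rule out the second kind. The only cosmetic difference is at the end: you use invertibility of $\beta-\beta^*$ in the field $C$ to get $d(I)=0$ and hence $d=0$ (a contradiction), while the paper concludes $\beta=\beta^*$ directly from $(\beta-\beta^*)Id(R)=0$; both are valid.
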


\begin{proof}
Assume that $d(T(R))=0$. Let $\beta=\beta^*\in C$. There exists a nonzero $*$-ideal $I$ of $R$ such that $\beta I\subseteq R$.
Let $x\in I$. Then $d(x+x^*)=0$ and
$$
0=d( \beta x+(\beta x)^*)=d(\beta (x+x^*))=d(\beta)(x+x^*).
$$
By Lemma \ref{lem4}, we get $d(\beta)=0$. Hence $d(\beta)=0$ for all $\beta=\beta^*\in C$. In view of Lemma \ref{lem26},
we get $d(C)=0$.

Let $\beta\in C$. Then $\beta I\subseteq R$ for some nonzero $*$-ideal $I$ of $R$. Let $x\in I$. Then
$d(x+x^*)=0$ and
$$
0=d( \beta x+(\beta x)^*)= \beta d(x)+\beta^*d(x^*)=(\beta-\beta^*)d(x).
$$
So $(\beta-\beta^*)d(I)=0$. Hence $(\beta-\beta^*)d(IR)=0$ and so $(\beta-\beta^*)Id(R)=0$. Since $d\ne 0$, we get $\beta=\beta^*$.
This proves that  $*$ is of the first kind.

The case $d(K_0(R))=0$ has almost the same argument by a slight modification.
\end{proof}

The following is Theorem A mentioned in the introduction.

\begin{thm}\label{thm2}
 If $T(R)\nsubseteq Z(R)$, then the following are equivalent:

(i)\ $[T(R), T(R)]=0$;

(ii)\ $d(T(R))=0$ for some nonzero  derivation $d$ of $R$;

(iii)\ $[b, T(R)]=0$ for some $b\in RC\setminus C$;

(iv)\ $R$ is exceptional and $*$  is of the first kind;

(v)\ $T(R)^2\subseteq Z(R)$.
\end{thm}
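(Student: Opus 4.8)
The plan is to prove the cyclic chain of implications $(i)\Rightarrow(iii)\Rightarrow(v)\Rightarrow(iv)\Rightarrow(ii)\Rightarrow(i)$, using the structure theory of $4$-dimensional central simple algebras throughout. First I would dispatch the easy ends: $(v)\Rightarrow(iv)$ follows since $T(R)^2\subseteq Z(R)$ forces $\dim_C RC=4$ by Lemma~\ref{lem1}(ii), so $RC$ is either $\text{\rm M}_2(C)$ or a $4$-dimensional central division algebra; then Lemma~\ref{lem7}, Lemma~\ref{lem3}(ii) and Lemma~\ref{lem6}(i) each say that in the transpose case $T(R)^2\subseteq Z(R)$ forces $\text{\rm char}\,R=2$, while Lemma~\ref{lem7}(iii)/Lemma~\ref{lem6} say the symplectic case is excluded because it gives $T(R)\subseteq Z(R)$, contrary to hypothesis; that $*$ is of the first kind is the remark following Definition~\ref{def4} (or drops out of $\text{\rm char}\,R=2$ together with Lemma~\ref{lem12} once $(ii)$ is in hand). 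Being exceptional means exactly $\text{\rm char}\,R=2$ and $\dim_C RC=4$, so this is the content we extract. Conversely $(iv)\Rightarrow(ii)$: if $R$ is exceptional then $RC=\text{\rm M}_2(C)$ or a $4$-dim central division algebra over $C$ with $\text{\rm char}=2$; pick $b\in RC$ with $b\notin C$ and $b$ not central, and set $d=\text{ad}(b)$ restricted appropriately — in characteristic $2$ one checks $T(R)$ is small enough (by the explicit descriptions in Lemmas~\ref{lem3} and~\ref{lem7}, $T(R)$ lies in a commutative subalgebra) that $[b,T(R)]=0$ for a suitable choice of $b$; this also gives $(iii)$ directly, so really I would prove $(iv)\Rightarrow(iii)$ and $(iii)\Rightarrow(ii)$ with the inner derivation. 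Finally $(ii)\Rightarrow(i)$ is immediate from Lemma~\ref{lem8}: if $d(T(R))=0$ then $d$ kills $T(R)^2$, which is a Lie ideal; but also $[T(R),T(R)]$ relates to $T(R)^2$, and more simply, $d(T(R))=0$ with $d\neq 0$ forces (via Lemma~\ref{lem1}(i) and the first-kind conclusion of Lemma~\ref{lem12}) $\dim_C RC=4$, after which the explicit matrix descriptions show $[T(R),T(R)]=0$; alternatively one argues $d(T(R))=0$ implies $T(R)$ is commutative by a standard derivation-on-Lie-ideal argument.

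The substantive implication is $(i)\Rightarrow(iii)$, or equivalently going straight from $[T(R),T(R)]=0$ to the structure. Here I would first reduce to $Q_s(R)$: since $T(R)\not\subseteq Z(R)$, pick $t_0\in T(R)\setminus Z(R)$; then $[t_0,T(R)]=0$, and I want to upgrade $t_0$ (which lies in $R$, hence in $RC\setminus C$) to the required $b$. So $(iii)$ is almost free once $(i)$ holds — take $b=t_0$. The real work is $(iii)\Rightarrow(v)$: given $b\in RC\setminus C$ with $[b,T(R)]=0$, I would show $T(R)^2\subseteq Z(R)$. The strategy: $T(R)$ is a Jordan-type subset, and $[b, x+x^*]=0$ for all $x\in R$ is a generalized functional identity in $x$; one linearizes and uses that $b\notin C$ to force strong constraints. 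Concretely, expanding $[b,x+x^*]=0$ and replacing $x$ by $xy$, $yx$, etc., and invoking primeness of $Q_s(R)$ (elements of $Q_s(R)$ with $aRb=0$ force $a=0$ or $b=0$, as noted before Lemma~\ref{lem4}), I expect to derive that $b$ together with its $*$-image satisfies a quadratic relation over $C$, pinning $\dim_C RC=4$, and then the explicit descriptions in Lemmas~\ref{lem7} and~\ref{lem3} identify $*$ as first-kind transpose with $\text{\rm char}=2$, whence $T(R)^2\subseteq Z(R)$ by Lemma~\ref{lem6}(i).

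The main obstacle I anticipate is the functional-identity computation in $(iii)\Rightarrow(v)$: controlling $[b,x+x^*]=0$ for all $x$ when $b\in RC\setminus C$ and $*$ may a priori be of the second kind requires carefully playing the centralizing condition against the involution. The clean route is: from $[b, T(R)]=0$ deduce first (via Lemma~\ref{lem12}-type reasoning applied to the inner derivation $\text{ad}(b)$, together with Lemma~\ref{lem1}) that $*$ is of the first kind and $\dim_C RC=4$; then pass to $\widetilde R=RC\otimes_C L$ for a splitting/maximal subfield $L$ so that $\widetilde R\cong\text{\rm M}_2(L)$, where $T(\widetilde R)=T(R)\otimes_C L$ and the centralizing element $b\otimes 1$ still centralizes $T(\widetilde R)$; there I can compute with honest $2\times 2$ matrices using Lemma~\ref{lem7}, rule out the symplectic case (it would give $T(R)\subseteq Z(R)$), and in the transpose case directly check that $[b, T(\widetilde R)]=0$ with $b$ noncentral forces $\text{\rm char}=2$, hence $T(R)^2\subseteq Z(R)$ by Lemma~\ref{lem6}. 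Reducing everything to the split $\text{\rm M}_2(L)$ case via Lemmas~\ref{lem10} and~\ref{lem7} is what makes the otherwise delicate identity manageable.
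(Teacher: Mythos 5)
Your overall architecture (reduce to $\dim_C RC=4$, first kind, transpose type, then split to $\text{\rm M}_2(L)$ and compute) is the same circle of ideas as the paper's, and the links $(i)\Rightarrow(iii)$, $(iii)\Rightarrow(v)$ and $(iv)\Rightarrow(iii)\Rightarrow(ii)$ are workable. The serious gap is your closing link $(ii)\Rightarrow(i)$. You claim that once Lemma~\ref{lem1}(i) and Lemma~\ref{lem12} give $\dim_CRC=4$ and the first kind, ``the explicit matrix descriptions show $[T(R),T(R)]=0$''. They do not: in the transpose first-kind case with $\text{\rm char}\,R\neq 2$, Lemma~\ref{lem3}(i) gives $T=\{\mathrm{diag}(2\alpha,2\delta)+\gamma e_{12}+\pi^{-1}\gamma e_{21}\}$, which is \emph{not} commutative (e.g.\ $\mathrm{diag}(2,0)$ fails to commute with $e_{12}+\pi^{-1}e_{21}$). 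So $[T(R),T(R)]=0$ only follows after $\text{\rm char}\,R=2$ has been forced, and extracting $\text{\rm char}\,R=2$ from hypothesis (ii) alone is precisely the nontrivial step your proposal never confronts: the derivation $d$ in (ii) is arbitrary (possibly not inner), so one must first show $d(C)=0$ (the paper evaluates $d$ on $\beta x+(\beta x)^*$ for $\beta=\beta^*\in C$, uses Lemma~\ref{lem4} and then Lemma~\ref{lem26}), invoke Skolem--Noether to make $d$ inner, and only then run the Lie-ideal argument on $T(R)^2$ (Lemmas~\ref{lem8} and~\ref{lem2}) to rule out $\text{\rm char}\,R\neq 2$. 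Your fallback (``$d(T(R))=0$ implies $T(R)$ is commutative by a standard derivation-on-Lie-ideal argument'') also fails as stated, since $T(R)$ is not a Lie ideal and Lemma~\ref{lem2} itself requires characteristic different from $2$.

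A second, smaller, gap is in $(v)\Rightarrow(iv)$: you need ``$*$ of the first kind'' as part of (iv), but the remark following Definition~\ref{def4} covers only the case $T(R)\subseteq Z(R)$, which is excluded by hypothesis, and Lemma~\ref{lem12} is not yet available at that point of your cycle (it needs (i) or (ii)). The cheap repair is the paper's: prove $(v)\Rightarrow(i)$ directly ($[t_1t_2,t_2]=0$ gives $[t_1,t_2]t_2T(R)=0$ with $0\neq t_2T(R)\subseteq Z(R)$ by Lemma~\ref{lem4}), after which (i) supplies a nonzero inner derivation annihilating $T(R)$ and Lemma~\ref{lem12} applies. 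With these two repairs your cycle closes; I note that your direct matrix verification in $(iii)\Rightarrow(v)$ (the centralizer of $T(\text{\rm M}_2(L))$ is central when $\text{\rm char}\neq2$) is a legitimate substitute for the paper's use of Lemma~\ref{lem2} on the Lie ideal $T(R)^2$.
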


\begin{proof}
In view of Lemma \ref{lem1}, if one of (i)--(v) is satisfied, then  $\dim_CRC=4$.
Also, by Lemma \ref{lem12}, any one of (i)--(iv) implies that $*$ is of the first kind.

Let $F:=C$ if $RC\cong \text{\rm M}_2(C)$, and let $F$ be a maximal subfield of $RC$ if $RC$ is a division algebra. Let
$\widetilde R:=RC\otimes_CF\cong\text{\rm M}_2(F)$. Since $T(R)\nsubseteq Z(R)$,
in either case, any one of  (i)--(iv) implies that
the canonical involution $* $ on $\widetilde R$ is of the transpose type (see Lemma \ref{lem7}). That is, the involution $*$ on $RC$ is of the transpose type.

(i) $\Rightarrow$ (ii):\ Since $T(R)\nsubseteq Z(R)$, we have $[b, T(R)]=0$ for some $b\in T(R)\setminus Z(R)$.
Let $d$ be the inner derivation of $R$ induced by the element $b$. Then $d$ is nonzero and $d(T(R))=0$, as desired.

(ii) $\Rightarrow$ (iii):\ Recall that $\dim_CRC=4$.
Let $\beta=\beta^*\in C$. There exists a nonzero $*$-ideal $I$ of $R$ such that $\beta I\subseteq R$.
Then, for $x\in I$, we have
$$
0=d((\beta x)+(\beta x)^*)=d(\beta(x+x^*))=d(\beta)(x+x^*).
$$
That is, $d(\beta)(x+x^*)=0$ for all $x\in I$. It follows from Lemma \ref{lem4} that $d(\beta)=0$.
Therefore, $d(\beta)=0$ for all $\beta=\beta^*\in C$.
In view of Lemma \ref{lem26},
we get $d(C)=0$.

By the Skolem-Noether theorem, the extension of $d$ to $RC$ is inner. That is, there exists $b\in RC\setminus C$ such that $d(x)=[b, x]$ for
all $x\in R$. By (ii), it follows that $[b, T(R)]=0$.

(iii) $\Rightarrow$ (iv):\
In view of Lemma \ref{lem8}, $T(R)^2$ is a Lie ideal of $R$. Since $[b, T(R)]=0$, we have $[b, T(R)^2]=0$.
If $\text{\rm char}\,R\ne 2$, by Lemma \ref{lem2} we see that $T(R)^2\subseteq Z(R)$.
Hence, by Lemmas \ref{lem3} and  \ref{lem6}, we get $\text{\rm char}\,R=2$, a contradiction.
Therefore, $\text{\rm char}\,R=2$ and so $R$ is an exceptional prime ring.

(iv) $\Rightarrow$ (v):\ In view of Lemmas \ref{lem3} and \ref{lem6}, it follows that
$T(R)^2\subseteq Z(R)$.

(v) $\Rightarrow$ (i):\
Let $t_1, t_2\in T(R)\setminus \{0\}$.
Since $T(R)^2\subseteq Z(R)$, we have
$
0=[t_1t_2, t_2]=[t_1, t_2]t_2,
$
and so $[t_1, t_2]t_2T(R)=0$. Note that $0\ne t_2T(R)\subseteq Z(R)$ (see Lemma \ref{lem4}). Thus $[t_1, t_2]=0$.
Therefore, $[T(R), T(R)]=0$.
\end{proof}

\begin{cor}\label{cor5}
Let $d$ be a nonzero derivation of $R$. If $d(T(R))=0$, then $T(R)^2\subseteq Z(R)$.
\end{cor}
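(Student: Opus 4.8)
The plan is to read this off directly from Theorem \ref{thm2}, after peeling away a trivial case. First I would dispose of the possibility $T(R)\subseteq Z(R)$: since $Z(R)$ is a subring of $R$, one has $T(R)^2\subseteq Z(R)^2\subseteq Z(R)$, and there is nothing to prove. (This also covers the degenerate case in which $R$ is commutative, where $Z(R)=R$.) So from now on I may assume $T(R)\nsubseteq Z(R)$.

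Under this assumption Theorem \ref{thm2} applies verbatim. The hypothesis of the corollary — that there exists a nonzero derivation $d$ of $R$ with $d(T(R))=0$ — is exactly statement (ii) of that theorem. Since (i)--(v) of Theorem \ref{thm2} are all equivalent when $T(R)\nsubseteq Z(R)$, we may invoke the implication (ii) $\Rightarrow$ (v) to conclude $T(R)^2\subseteq Z(R)$, which is the desired assertion. Thus the two cases together give the claim.

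There is essentially no obstacle here; the corollary is a straight specialization of the equivalence (ii) $\Leftrightarrow$ (v) in Theorem \ref{thm2}. The only point requiring a moment's care is that Theorem \ref{thm2} carries the standing hypothesis $T(R)\nsubseteq Z(R)$, so one must explicitly handle the excluded case $T(R)\subseteq Z(R)$ separately before appealing to it — and in that case the conclusion is immediate because the center is closed under multiplication.
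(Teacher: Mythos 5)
Your proposal is correct and matches the paper's own proof: dispose of the case $T(R)\subseteq Z(R)$ trivially (the center is closed under multiplication), then invoke the equivalence (ii) $\Leftrightarrow$ (v) of Theorem \ref{thm2} when $T(R)\nsubseteq Z(R)$. No further comment is needed.
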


\begin{proof}
If $T(R)\subseteq Z(R)$, it is clear that $T(R)^2\subseteq Z(R)$.
Suppose that $T(R)\nsubseteq Z(R)$. Since $d(T(R))=0$, it follows from Theorem \ref{thm2} that $T(R)^2\subseteq Z(R)$, as desired.
\end{proof}

Given a subset $A$ of a ring $R$,
let $\mathfrak{C}_R(A)$ denote the {\it centralizer} of $A$ in $R$, that is,
$$
\mathfrak{C}_R(A)=\{x\in R\mid xa=ax\ \forall a\in A\}.
$$

 A prime ring $R$ is called {\it centrally closed} if $R=RC$.
The following corollary itself is interesting.

\begin{cor}\label{cor1}
If $R$ is centrally closed, then $\mathfrak{C}_R(T(R))$ is equal to $R$, $Z(R)$, or $T(R)+Z(R)$.
\end{cor}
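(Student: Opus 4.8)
The plan is to split into the two cases $T(R)\subseteq Z(R)$ and $T(R)\nsubseteq Z(R)$, exploiting that $R=RC$ so that $C=Z(R)$ and $R$ is centrally closed. In the first case, $\mathfrak{C}_R(T(R))=R$ trivially, since every element of $R$ commutes with every element of the central set $T(R)$; this gives the first of the three listed possibilities. So the substance is the case $T(R)\nsubseteq Z(R)$, where I want to show $\mathfrak{C}_R(T(R))$ is either $Z(R)$ or $T(R)+Z(R)$.

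First I would set $b\in\mathfrak{C}_R(T(R))$, so $[b,T(R)]=0$. If $b\in C=Z(R)$ for every such $b$, then $\mathfrak{C}_R(T(R))=Z(R)$ and we are done. Otherwise there is some $b\in\mathfrak{C}_R(T(R))\setminus C$, i.e. $b\in RC\setminus C$ with $[b,T(R)]=0$; this is exactly condition (iii) of Theorem \ref{thm2}. Hence all the equivalent conditions (i)--(v) hold: in particular $R$ is exceptional, $*$ is of the first kind, $\dim_C RC=4$, and $T(R)^2\subseteq Z(R)$. Now I would use the explicit description of $T(R)$ available in this situation — via Lemma \ref{lem3}(i) after scalar extension $\widetilde R=RC\otimes_C F\cong\mathrm M_2(F)$, where the transpose-type first-kind involution with $\mathrm{char}=2$ gives $T(\widetilde R)=\{\bigl[\begin{smallmatrix}0&\gamma\\ \pi^{-1}\gamma&0\end{smallmatrix}\bigr]\}$, a $1$-dimensional space over $F$. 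Thus $T(R)+Z(R)=T(R)\oplus Z(R)$ is $2$-dimensional over $C$ (the sum is direct since $T(R)\cap Z(R)=0$ when $\mathrm{char}=2$ and $*$ is transpose type), and one checks directly that every element of $T(R)+Z(R)$ commutes with $T(R)$ (using $T(R)^2\subseteq Z(R)$, which forces $[t_1,t_2]=0$ for $t_1,t_2\in T(R)$ by the argument in (v)$\Rightarrow$(i), together with centrality of $Z(R)$). So $T(R)+Z(R)\subseteq\mathfrak{C}_R(T(R))$.

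For the reverse inclusion, I would take an arbitrary $c\in\mathfrak{C}_R(T(R))$ and show $c\in T(R)+Z(R)$. Pass to $\widetilde R\cong\mathrm M_2(F)$: picking a fixed nonzero $t_0\in T(R)$ (which after extension is $\bigl[\begin{smallmatrix}0&1\\ \pi^{-1}&0\end{smallmatrix}\bigr]$, an invertible matrix with two distinct eigenvalues in a splitting field, or at worst we compute its centralizer directly), the centralizer of $t_0$ alone in $\mathrm M_2(F)$ is $F[t_0]=F\cdot 1+F\cdot t_0$, which is precisely $Z(\widetilde R)+T(\widetilde R)$. Intersecting with $R$ (i.e. descending the $C$-subspace back to $R$, using that $T(R)$, $Z(R)=C$ and $\mathfrak C_R(T(R))$ are all $C$-subspaces whose extensions are the corresponding subspaces of $\widetilde R$) yields $c\in T(R)+Z(R)$. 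Hence $\mathfrak{C}_R(T(R))=T(R)+Z(R)$ in this subcase, completing the trichotomy.

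The main obstacle I anticipate is the bookkeeping in the last step: one must justify that forming the centralizer commutes with the scalar extension $-\otimes_C F$ and that $T(R)+Z(R)$ is recovered by descent from $T(\widetilde R)+Z(\widetilde R)$. This is routine linear algebra over the field $C$ (finite-dimensionality makes $\dim_C$ counting decisive, and $C$ is a field so there are no flatness issues), but it needs to be stated carefully rather than waved through; the genuinely new content beyond Theorem \ref{thm2} is merely the observation that in the exceptional transpose-type case the centralizer of the $1$-dimensional space $T(R)$ is exactly the $2$-dimensional commutative subalgebra it generates together with the center.
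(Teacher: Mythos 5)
Your proof is correct and follows essentially the same route as the paper's: split on whether $T(R)\subseteq Z(R)$, invoke Theorem \ref{thm2} to land in the exceptional, transpose-type, first-kind case, and then identify $\mathfrak{C}_R(T(R))$ with $T(R)+Z(R)$ by computing the centralizer of $T(\widetilde R)$ in $\widetilde R\cong \mathrm{M}_2(F)$ and descending via the compatibility of centralizers with the scalar extension $-\otimes_C F$. One minor quibble: since $\mathrm{char}\,R=2$ and $t_0^2=\pi^{-1}$ is central, $t_0$ does \emph{not} have two distinct eigenvalues in a splitting field, so your fallback of computing the centralizer directly is the justification that actually applies (it works because any non-scalar $2\times 2$ matrix is non-derogatory, whence its centralizer is the two-dimensional algebra $F[t_0]=F\cdot 1+F\cdot t_0=Z(\widetilde R)+T(\widetilde R)$).
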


\begin{proof}
Clearly, if $T(R)\subseteq Z(R)$, then $\mathfrak{C}_R(T(R))=R$. Assume that $T(R)\nsubseteq Z(R)$.
If $\mathfrak{C}_R(T(R))\subseteq Z(R)$, then $\mathfrak{C}_R(T(R))=Z(R)$. Hence we assume that $\mathfrak{C}_R(T(R))\nsubseteq Z(R)$.
That is, there exists $b\in R\setminus Z(R)$ such that $[b, T(R)]=0$. In view Theorem \ref{thm2} (iv), $R$ is exceptional and $*$  is of the first kind.
In this case, $Z(R)=C$. Also, one of the following holds:

Case 1:\ $R\cong \text{\rm M}_2(C)$. Since $T(R)\nsubseteq Z(R)$, $*$ is of the transpose type on $R$. As given in Lemma \ref{lem3},
$$
T(R)=\{\left[
\begin{array}{cc}
0 & \gamma \\
\pi^{-1}\gamma& 0%
\end{array}%
\right]\mid \gamma\in C\}.
$$
Let $x\in R$. Then $x\in \mathfrak{C}_R(T(R))$ iff $\big[x, \left[
\begin{array}{cc}
0 & 1\\
\pi^{-1}&0%
\end{array}%
\right]\big]=0$. A direct computation shows that $x\in T(R)+C$. That is, $ \mathfrak{C}_R(T(R))=T(R)+Z(R)$.

Case 2:\ $R$ is a $4$-dimensional central division algebra, and $\text{\rm char}\,R=2$. Since $T(R)\nsubseteq Z(R)$, $*$ is of the transpose type on $R$.
Let $F$ be a maximal subfield of $R$, and let $\widetilde R:=R\otimes_{Z(R)}F\cong \text{\rm M}_2(F)$.  As given in Case 1, we have
$$
 \mathfrak{C}_{\widetilde R}(T({\widetilde R}))=T({\widetilde R})+Z({\widetilde R})=T(R)\otimes F+Z(R)\otimes F.
$$
On the other hand,
$
 \mathfrak{C}_{\widetilde R}(T({\widetilde R}))= \mathfrak{C}_{\widetilde R}(T(R)\otimes F)=\mathfrak{C}_R(T(R))\otimes F.
$
Hence we get $\mathfrak{C}_R(T(R))=T(R)+Z(R)$.
\end{proof}

\begin{lem}\label{lem9}
Let $R$ be a prime ring with a Lie ideal $L$. Then $\mathfrak{C}_R(L)=R$, $\mathfrak{C}_R(L)=Z(R)$, or both $R$ is exceptional and
$LC=[a, RC]=Ca+C$ for any $a\in L\setminus Z(R)$.
\end{lem}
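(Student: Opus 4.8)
The plan is to argue according to whether $L$ is central. If $L\subseteq Z(R)$, then $\mathfrak{C}_R(L)=R$ and we are done, so assume $L\nsubseteq Z(R)$ from now on.

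The first step is to show that $\mathfrak{C}_R(L)$ is itself a Lie ideal of $R$. For $b\in\mathfrak{C}_R(L)$, $r\in R$ and $l\in L$, the Jacobi identity gives
$$
[[b,r],l]=-[[r,l],b]-[[l,b],r];
$$
here $[l,b]=0$ since $b$ centralizes $L$, and $[r,l]\in L$ since $L$ is a Lie ideal, hence $[[r,l],b]=0$ as well, so $[[b,r],l]=0$ for all $l\in L$, i.e.\ $[b,r]\in\mathfrak{C}_R(L)$. Thus $\mathfrak{C}_R(L)$ and $L$ are Lie ideals of the prime ring $R$ with $[\mathfrak{C}_R(L),L]=0\subseteq Z(R)$, so Lemma~\ref{lem5}(iv) applies: either $\mathfrak{C}_R(L)\subseteq Z(R)$, whence $\mathfrak{C}_R(L)=Z(R)$ (since $Z(R)\subseteq\mathfrak{C}_R(L)$), or $L\subseteq Z(R)$, which is excluded --- unless $R$ is exceptional.

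It then remains to deal with the case that $R$ is exceptional and $\mathfrak{C}_R(L)\nsubseteq Z(R)$; fix $b\in\mathfrak{C}_R(L)\setminus Z(R)$. Here I would pass to $S:=RC$, which by the definition of exceptional is a $4$-dimensional central simple algebra over the field $C$ with $\text{\rm char}\,C=2$; then $LC$ is a noncentral Lie ideal of $S$ and $b\notin C$. Since $[b,L]=0$, we have $LC\subseteq\mathfrak{C}_S(b)$, and because $b\notin C$ lies in the $4$-dimensional central simple algebra $S$, it is algebraic of degree $2$ over $C$, so $Cb+C$ is a $2$-dimensional subalgebra contained in $\mathfrak{C}_S(b)$; a short computation with the $C$-linear map $\operatorname{ad}_b$ (equivalently, the centralizer theorem) shows $\dim_C\mathfrak{C}_S(b)=2$, so $\mathfrak{C}_S(b)=Cb+C$, and likewise $\dim_C[a,S]=\dim_CS-\dim_C\mathfrak{C}_S(a)=2$ for every $a\in S\setminus C$. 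Now take $a\in L\setminus Z(R)$ arbitrary; then $a\in LC$, so $[a,S]\subseteq LC\subseteq Cb+C$, and comparing $C$-dimensions forces $LC=[a,S]=Cb+C$. Finally $a\in(Cb+C)\setminus C$ gives $C[a]=C[b]$, hence $Ca+C=Cb+C$, and therefore $LC=[a,RC]=Ca+C$, as claimed.

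I expect the exceptional case to be the main obstacle: one must be careful that $RC$ genuinely is a $4$-dimensional central simple $C$-algebra (so that the dimension identities $\dim_C\mathfrak{C}_S(b)=2$ and $\dim_C[a,S]=2$ are valid, checking the degenerate subcases such as $b-\gamma$ being square-zero), and one must note that $[a,S]\subseteq LC$ because $LC$ is a Lie ideal of $S$ containing the non-scalar element $a$. Everything else is routine.
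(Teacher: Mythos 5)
Your proof is correct, and its first half is exactly the paper's: you show $\mathfrak{C}_R(L)$ is a Lie ideal via the Jacobi identity, note $[\mathfrak{C}_R(L),L]=0$, and invoke Lemma~\ref{lem5}(iv). Where you diverge is the exceptional case: the paper simply cites an external result (\cite[Lemma 6.1]{lee2025}) to obtain the description $LC=[a,RC]=Ca+C$, whereas you prove it from scratch by passing to $S=RC$, a $4$-dimensional central simple $C$-algebra, and counting dimensions: $\dim_C\mathfrak{C}_S(b)=2$ for any $b\in S\setminus C$, hence $\dim_C[a,S]=2$ by rank--nullity for $\operatorname{ad}_a$, and the chain $[a,S]\subseteq LC\subseteq\mathfrak{C}_S(b)=Cb+C$ collapses to equalities. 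The one point that genuinely needs checking is your claim $\dim_C\mathfrak{C}_S(b)=2$, since $C[b]$ need not be a simple subalgebra (e.g.\ $b-\gamma$ square-zero), so the double centralizer theorem does not apply verbatim; but it does hold, because in the division-algebra case $C(b)$ is a maximal subfield and in the matrix case every non-scalar element of $\text{\rm M}_2(C)$ is non-derogatory, so its centralizer is $C[b]$, which is $2$-dimensional. Your version buys self-containedness (no reliance on the companion paper) at the cost of a page of linear algebra; the paper's version is shorter but opaque without \cite{lee2025}. Also note that your argument only needs $[\mathfrak{C}_R(L),L]=0$ rather than $\subseteq Z(R)$, which is available here, so no generality is lost.
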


\begin{proof}
Let $x\in \mathfrak{C}_R(L)$ and $r\in R$. Then $[x, L]=0$, $[r, L]\subseteq L$ and so
$$
\big[[x, r], L\big]\subseteq \big[[x, L], r\big]+\big[x, [r, L]\big]\subseteq \big[x, L\big]=0.
$$
That is, $[\mathfrak{C}_R(L), R]\subseteq \mathfrak{C}_R(L)$, proving that $\mathfrak{C}_R(L)$ is a Lie ideal of $R$.
Clearly, $[\mathfrak{C}_R(L), L]=0$.
In view of Lemma \ref{lem5} (iv) and \cite[Lemma 6.1]{lee2025}, $\mathfrak{C}_R(L)\subseteq Z(R)$, $L\subseteq Z(R)$, or both $R$ is exceptional and
$LC=[a, RC]=Ca+C$ for any $a\in L\setminus Z(R)$. The first two cases imply that $\mathfrak{C}_R(L)=Z(R)$ and $\mathfrak{C}_R(L)=R$, respectively.
The proof is now complete.
\end{proof}

\begin{cor}\label{cor3}
If $R$ is centrally closed, then $\mathfrak{C}_R(T(R)^2)$ is equal to either $R$ or $Z(R)$.
\end{cor}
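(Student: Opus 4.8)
The plan is to reduce everything to the centralizer dichotomy for Lie ideals already recorded in Lemma~\ref{lem9}. By Lemma~\ref{lem8}, $T(R)^2$ is a Lie ideal of $R$, so Lemma~\ref{lem9} shows that $\mathfrak{C}_R(T(R)^2)$ equals $R$ or $Z(R)$ \emph{unless} $R$ is exceptional and $T(R)^2C=[a,RC]=Ca+C$ for every $a\in T(R)^2\setminus Z(R)$. Thus the whole task reduces to excluding this last alternative under the hypothesis that $R$ is centrally closed.

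So suppose it occurs. Since $R$ is centrally closed and exceptional, $R=RC$ is a $4$-dimensional central simple algebra over the field $C=Z(R)$ with $\operatorname{char}R=2$; write $\operatorname{trd}$ and $\operatorname{Nrd}$ for its reduced trace and reduced norm. Fix $a\in T(R)^2\setminus Z(R)$, so that $Ca+C=[a,R]$ (using $RC=R$). This is a $2$-dimensional $C$-subspace, every element of which is a commutator $[a,r]$ and therefore has reduced trace $0$. Since a typical element $\alpha a+\beta$ of $Ca+C$ has $\operatorname{trd}(\alpha a+\beta)=\alpha\operatorname{trd}(a)+2\beta=\alpha\operatorname{trd}(a)$ in characteristic $2$, we must have $\operatorname{trd}(a)=0$, and then the degree-$2$ reduced characteristic equation of $a$ gives $a^2=\operatorname{Nrd}(a)\in Z(R)$. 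This holds for every $a\in T(R)^2\setminus Z(R)$, and trivially for $a\in Z(R)$, so $w^2\in Z(R)$ for all $w\in T(R)^2$; in particular $t^2\in Z(R)$ for every $t\in T(R)$.

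Next I would extract from this the kind of $*$. If $t\in T(R)$ and $t\notin Z(R)$, then $1$ and $t$ are $C$-independent, so $t^2\in Z(R)=C$ forces $\operatorname{trd}(t)\,t\in C$, hence $\operatorname{trd}(t)=0$; and if $t\in Z(R)$ then $\operatorname{trd}(t)=2t=0$. Thus $\operatorname{trd}(x+x^*)=0$ for all $x\in R$, i.e.\ $\operatorname{trd}(x^*)=\operatorname{trd}(x)$ by $\operatorname{char}R=2$. Replacing $x$ by $cx$ for $c\in C$ and using $(cx)^*=\overline c\,x^*$, where $c\mapsto\overline c$ denotes the restriction of $*$ to $C$, this yields $(\overline c-c)\operatorname{trd}(x)=0$ for all $x\in R$. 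Since the reduced trace $\operatorname{trd}\colon R\to C$ is a nonzero $C$-linear map, it is onto, so choosing $x$ with $\operatorname{trd}(x)\ne0$ gives $\overline c=c$ for every $c\in C$; that is, $*$ is of the first kind.

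Finally, $R$ is exceptional and $*$ is of the first kind, so Theorem~\ref{thm2} applies: if $T(R)\nsubseteq Z(R)$ then condition (iv) there holds, whence condition (v) holds, namely $T(R)^2\subseteq Z(R)$; and if $T(R)\subseteq Z(R)$ this is trivial. Either way $T(R)^2\subseteq Z(R)$, contradicting the choice $a\in T(R)^2\setminus Z(R)$. Therefore the exceptional alternative of Lemma~\ref{lem9} is impossible when $R$ is centrally closed, and $\mathfrak{C}_R(T(R)^2)$ equals $R$ or $Z(R)$. The one delicate point is the very first reduction: one must be sure that in the exceptional clause of Lemma~\ref{lem9} the subspace $[a,R]=Ca+C$ simultaneously has $C$-dimension $2$ \emph{and} consists of trace-zero elements, as this is exactly what pins down $\operatorname{trd}(a)=0$; everything afterward is routine given Theorem~\ref{thm2}.
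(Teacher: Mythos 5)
Your overall strategy is sound and, in the key step, genuinely different from the paper's. Both arguments start from Lemma~\ref{lem8} and Lemma~\ref{lem9} and must rule out the exceptional alternative $T(R)^2C=[a,RC]=Ca+C$. The paper does this by noting that $\dim_CT(R)^2C=2$ forces $T(R)\nsubseteq Z(R)$ and $T(R)^2\nsubseteq Z(R)$, so that Theorem~\ref{thm2} ((iv)$\Leftrightarrow$(v)) makes $*$ of the \emph{second} kind; it then picks $\beta\in C$ with $\beta\ne\beta^*$ and a $*$-ideal $I$ with $\beta I\subseteq R$ and computes $(\beta^*-\beta)^2I^2C\subseteq T(R)^2C$, forcing $T(R)^2C=R$ and contradicting $2$-dimensionality. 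You instead show $*$ is of the \emph{first} kind by reduced-trace considerations and then use (iv)$\Rightarrow$(v) of Theorem~\ref{thm2} to get $T(R)^2\subseteq Z(R)$, contradicting the choice of $a$. Both routes are legitimate; yours imports the reduced trace and norm of a degree-$2$ central simple algebra, which the paper's self-contained ideal computation avoids.

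There is, however, one non sequitur you must repair. From ``$w^2\in Z(R)$ for all $w\in T(R)^2$'' you conclude ``in particular $t^2\in Z(R)$ for every $t\in T(R)$.'' This does not follow: $T(R)$ is not contained in $T(R)^2$, and what your premise gives for $t\in T(R)$ is only $(t^2)^2\in Z(R)$. The conclusion you need is nevertheless recoverable from what you already have on the table: $t^2\in T(R)^2\subseteq Ca+C$, and every element $\alpha a+\beta$ of $Ca+C$ has reduced trace $\alpha\operatorname{trd}(a)+2\beta=0$, so $\operatorname{trd}(t^2)=0$; since in characteristic $2$ one has $\operatorname{trd}(t^2)=\operatorname{trd}(t)^2-2\operatorname{Nrd}(t)=\operatorname{trd}(t)^2$, this gives $\operatorname{trd}(t)=0$ directly, and then $t^2=\operatorname{Nrd}(t)\in Z(R)$ if you still want it. With that substitution the remainder of your argument (surjectivity of $\operatorname{trd}$, first kind, Theorem~\ref{thm2}) goes through.
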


\begin{proof}
In view of Lemma \ref{lem8}, $T(R)^2$ is a Lie ideal of $R$. By Lemma \ref{lem9}, we have $\mathfrak{C}_R(T(R)^2)=R$, $\mathfrak{C}_R(T(R)^2)
=Z(R)$, or both $R$ is exceptional and $T(R)^2C=[a, RC]=Ca+C$
for any $a\in T(R)^2\setminus C$. Assume the third case. Then $\dim_CT(R)^2C=2$ and $T(R)\nsubseteq Z(R)$. In view Theorem \ref{thm2}, $*$ is of the second kind. There exists $\beta\in C$ such that $\beta\ne \beta^*$. Let $I$ be a nonzero $*$-ideal of $R$ such that $\beta I\subseteq R$.
Then, for $x\in I$,
$$
(\beta^*-\beta)T(R)x=T(R)\big(\beta^*(x+x^*)-(\beta x+(\beta x)^*)\big)\subseteq T(R)^2C.
$$
Applying the same argument, we get $(\beta^*-\beta)^2yx\subseteq T(R)^2C$ for all $x, y\in I$. Hence $I^2C  \subseteq T(R)^2C$.
Note that $I^2C=R$. Therefore $T(R)^2C=R$, a contradiction. This completes the proof.
\end{proof}

\section{Theorem B}
In this section we always assume that
{\it $R$ is a prime ring with involution $*$}.
Relative to Theorem \ref{thm2}, we have another corresponding skew trace result, i.e., Theorem B mentioned in the introduction.

\begin{thm}\label{thm4}
If $K_0(R)\nsubseteq Z(R)$,
then the following are equivalent:

(i)\ $[K_0(R), K_0(R)]=0$;

(ii)\ $d(K_0(R))=0$ for some nonzero  derivation $d$ of $R$;

(iii)\ $[b, K_0(R)]=0$ for some $b\in RC\setminus C$;

(iv)\ $K_0(R)^2\subseteq Z(R)$;

(v)\ $\dim_CRC=4$, $*$ is of the transpose type, and $*$ is of the first kind.
\end{thm}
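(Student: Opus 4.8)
The plan is to prove the cycle of implications $(i)\Rightarrow(ii)\Rightarrow(iii)\Rightarrow(v)\Rightarrow(iv)\Rightarrow(i)$, transcribing the proof of Theorem \ref{thm2} with $K_0(R)$ in place of $T(R)$ and using Lemmas \ref{lem3}, \ref{lem6}, \ref{lem7} to describe skew traces in the $4$-dimensional case. Two of the steps are immediate. For $(i)\Rightarrow(ii)$: since $K_0(R)\nsubseteq Z(R)$ the ring $R$ is noncommutative, so one may pick $b\in K_0(R)\setminus Z(R)$; the inner derivation $d:=[b,\,\cdot\,]$ of $R$ is nonzero and annihilates $K_0(R)$ because $[K_0(R),K_0(R)]=0$. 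For $(iv)\Rightarrow(i)$: given nonzero $k_1,k_2\in K_0(R)$, the inclusion $K_0(R)^2\subseteq Z(R)$ yields $0=[k_1k_2,k_2]=[k_1,k_2]k_2$, hence $[k_1,k_2]\,k_2K_0(R)=0$; by Lemma \ref{lem4} we have $k_2K_0(R)\ne 0$, and any nonzero element of $k_2K_0(R)\subseteq Z(R)$ is not a zero divisor, so $[k_1,k_2]=0$.

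For $(ii)\Rightarrow(iii)$ I would repeat the argument used in Theorem \ref{thm2}: Lemma \ref{lem1} gives $\dim_CRC=4$ and Lemma \ref{lem12} gives that $*$ is of the first kind; then for a symmetric $\beta\in C$ and a $*$-ideal $I$ with $\beta I\subseteq R$ one finds $d(\beta)K_0(I)=0$, so $d(\beta)=0$ by Lemma \ref{lem4}, whence $d(C)=0$ by Lemma \ref{lem26}; the Skolem--Noether theorem then makes the extension of $d$ to $RC$ inner, $d=[b,\,\cdot\,]$ with $b\in RC\setminus C$, and $[b,K_0(R)]=d(K_0(R))=0$. For $(v)\Rightarrow(iv)$: Lemma \ref{lem3}(iii) (when $RC\cong\M_2(C)$) or Lemma \ref{lem6}, Case~1(ii) (when $RC$ is a $4$-dimensional central division algebra) gives $K_0(RC)^2=C$; since $*$ is of the first kind, $K_0(RC)=K_0(R)C$, and therefore $K_0(R)^2\subseteq R\cap C=Z(R)$.

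The substantial step is $(iii)\Rightarrow(v)$. First I would show directly that $*$ is of the first kind: if some $\beta\in C$ had $\beta\ne\beta^*$, then on a $*$-ideal $I$ with $\beta I\subseteq R$ both $x-x^*$ and $\beta x-\beta^*x^*$ lie in $K_0(R)$ for $x\in I$, so $[b,x]=[b,x^*]$ and $(\beta-\beta^*)[b,x]=0$; as $\beta-\beta^*$ is invertible in $RC$ this forces $[b,I]=0$, hence $I[b,R]=0$ and $b\in C$, a contradiction. Once $*$ is of the first kind we have $K_0(RC)=K_0(R)C$, so the nonzero inner derivation $[b,\,\cdot\,]$ of the centrally closed prime ring $RC$ kills $K_0(RC)$, and Lemma \ref{lem1} yields $\dim_CRC=4$. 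Next, pass to $\widetilde R:=RC\otimes_CF\cong\M_2(F)$, taking $F=C$ if $RC\cong\M_2(C)$ and $F$ a maximal subfield of $RC$ otherwise; the canonical involution on $\widetilde R$ is again of the first kind, $K_0(\widetilde R)=K_0(R)C\otimes_CF$, and $b\otimes 1\notin Z(\widetilde R)=F$. It remains to exclude the symplectic type on $\widetilde R$: if it were symplectic then, by Lemma \ref{lem7}(i), either $\text{\rm char}\,R=2$, in which case $K_0(\widetilde R)\subseteq F$ and hence $K_0(R)\subseteq Z(R)$, contrary to hypothesis; or $\text{\rm char}\,R\ne 2$, in which case $K_0(\widetilde R)=[\widetilde R,\widetilde R]$, so $\big[b\otimes 1,[\widetilde R,\widetilde R]\big]=0$ forces $b\otimes 1\in F$ by Lemma \ref{lem5}(iii), again impossible. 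Hence $*$ on $RC$ is of the transpose type, and together with the facts above this is $(v)$.

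The main obstacle will be the interplay between the second-kind and characteristic-$2$ situations. One must establish ``first kind'' before one is entitled to identify $K_0(RC)$ with $K_0(R)C$ and to pass cleanly to $\widetilde R\cong\M_2(F)$; this is why the routing above proves $(iii)\Rightarrow(v)$ directly rather than going through $(iv)$, since the element $b$ is exactly what disposes of the second-kind case in one line, whereas deducing ``first kind'' from $K_0(R)^2\subseteq Z(R)$ alone is awkward in characteristic $2$. And it is precisely in characteristic $2$, where the symplectic involution degenerates and $K_0$ collapses into the centre, that the standing hypothesis $K_0(R)\nsubseteq Z(R)$ is genuinely used. The remaining steps are routine transcriptions of the corresponding ones in the proof of Theorem \ref{thm2}.
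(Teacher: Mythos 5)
Your proof is correct, and for the pivotal step it takes a genuinely different route from the paper's. The paper first disposes of characteristic $2$ outright via $K_0(R)=T(R)$ and Theorem \ref{thm2}, and then, in characteristic $\ne 2$, closes the cycle through (iii) $\Rightarrow$ (iv): it observes that $K_0(R)^2$ is a Lie ideal (Lemma \ref{lem8}) annihilated by the inner derivation $[b,\cdot\,]$ and invokes Lemma \ref{lem2}, which is precisely where char $\ne 2$ is needed; the equivalence with (v) is appended afterwards via Lemmas \ref{lem6} and \ref{lem7}. You instead route the cycle as (iii) $\Rightarrow$ (v) $\Rightarrow$ (iv): you extract ``first kind'' directly from the centralizing element $b$, get $\dim_CRC=4$ from Lemma \ref{lem1} applied to $\mathrm{ad}\,b$ on $RC$, and exclude the symplectic type by inspecting $K_0$ in $\text{\rm M}_2(F)$ after scalar extension --- in characteristic $2$ the symplectic $K_0$ is central, contradicting the standing hypothesis $K_0(R)\nsubseteq Z(R)$, and in characteristic $\ne 2$ one has $K_0(\widetilde R)=[\widetilde R,\widetilde R]$, so Lemma \ref{lem5}(iii) would force $b$ central. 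This treatment is uniform in the characteristic and avoids both the separate char-$2$ reduction and Lemma \ref{lem2}, at the cost of somewhat more explicit matrix and tensor bookkeeping; the paper's version is shorter once Theorem \ref{thm2} is in hand. The implications (i) $\Rightarrow$ (ii), (ii) $\Rightarrow$ (iii) and (iv) $\Rightarrow$ (i) in your write-up coincide with the paper's.
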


\begin{proof}
We first consider the case that $\text{\rm char}\,R=2$. In this case, we have $T(R)=K_0(R)$. It follows from Theorem \ref{thm2} that (i)--(iv) are equivalent. Moreover, by Lemma \ref{lem12}, $*$ is of the first kind.
Since $T(R)=K_0(R)\nsubseteq Z(R)$, it follows that $*$ is of the transpose type. So (i) implies (v). By Theorem \ref{thm2}, (v) also implies (i)
and hence we are done in this case.

Hence we always assume that $\text{\rm char}\,R\ne 2$. In view of Lemma \ref{lem1}, if one of (i)--(iv) is satisfied, then
$\dim_CRC=4$.

(i) $\Rightarrow$ (ii):\ Since $K_0(R)\nsubseteq Z(R)$, choose $b\in K_0(R)\setminus Z(R)$. Let $d(x)=[b, x]$ for $x\in R$.
Then $d$ is a nonzero derivation of $R$ and $d(K_0)=0$.

(ii) $\Rightarrow$ (iii):\ We have a similar argument as given in the proof of ``(ii) $\Rightarrow$ (iii)'' of Theorem \ref{thm2}.

(iii) $\Rightarrow$ (iv):\ Note that $K_0(R)^2$ is a Lie ideal of $R$ (see Lemma \ref{lem8}). Since $[b, K_0(R)]=0$, we get $[b, K_0(R)^2]=0$.
In view of Lemma \ref{lem2}, it follows that $K_0(R)^2\subseteq Z(R)$, as desired.

(iv) $\Rightarrow$ (i):\ Let $k_1, k_2\in K_0(R)\setminus \{0\}$. Then $[k_1k_2, k_2]=0$ and so
$[k_1, k_2]k_2K_0(R)=0$. In view of Lemma \ref{lem4}, $0\ne k_2K_0(R)\subseteq Z(R)$. We conclude that $[k_1, k_2]=0$, as desired.

Up to now, we have proved that (i)--(iv) are equivalent. In this case, it follows from Lemma \ref{lem12} that $*$ is of the first kind.

(iv) $\Rightarrow$ (v):\ Suppose on the contrary that $*$ is of the symplectic type on $RC$.
Hence, by Lemma \ref{lem7} (i) and Case 2 of Lemma \ref{lem6}, $K_0(R)^2\subseteq Z(R)$ iff $\text{\rm char}\,R=2$, a contradiction. Thus $*$ is of the transpose type.

(v) $\Rightarrow$ (iv):\ By Lemma \ref{lem3} and Case 1(ii) of Lemma \ref{lem6}, we get $K_0(R)^2\subseteq C$.
\end{proof}

\begin{cor}\label{cor6}
Let $d$ be a nonzero derivation of $R$. If $d(K_0(R))=0$, then $K_0(R)^2\subseteq Z(R)$.
\end{cor}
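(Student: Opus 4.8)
The plan is to mimic the proof of Corollary \ref{cor5} almost verbatim, replacing the trace $T(R)$ by the skew trace $K_0(R)$ and Theorem \ref{thm2} by Theorem \ref{thm4}. First I would dispose of the degenerate case: if $K_0(R)\subseteq Z(R)$, then, since $Z(R)$ is a subring of $R$, we immediately get $K_0(R)^2\subseteq Z(R)$, and there is nothing further to prove. This case split is the only bookkeeping point, and it is forced on us because Theorem \ref{thm4} carries the standing hypothesis $K_0(R)\nsubseteq Z(R)$.

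So I would then assume $K_0(R)\nsubseteq Z(R)$. Under this hypothesis, the data we are handed — a nonzero derivation $d$ of $R$ with $d(K_0(R))=0$ — is precisely statement (ii) of Theorem \ref{thm4}. Applying the equivalence of (i)--(v) established there, in particular the implication (ii) $\Rightarrow$ (iv), we conclude $K_0(R)^2\subseteq Z(R)$, which is exactly the assertion of the corollary.

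I do not expect any real obstacle: all the substantive work has already been front-loaded into Theorem \ref{thm4} (where the derivation hypothesis is pushed, via the Skolem--Noether theorem and Lemmas \ref{lem2}, \ref{lem3}, \ref{lem6}, \ref{lem8}, through to the structural conclusion on $K_0(R)^2$), and this corollary is simply the extraction of one implication from that theorem, together with the trivial central case. The proof should therefore occupy only a couple of lines, exactly in parallel with Corollary \ref{cor5}.
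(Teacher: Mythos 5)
Your proof is correct and matches the paper's intended argument exactly: the paper states Corollary \ref{cor6} without proof, but it is the verbatim analogue of the proof given for Corollary \ref{cor5}, namely the trivial case $K_0(R)\subseteq Z(R)$ plus the implication (ii) $\Rightarrow$ (iv) of Theorem \ref{thm4} when $K_0(R)\nsubseteq Z(R)$.
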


\begin{cor}\label{cor2}
If $R$ is centrally closed, then $\mathfrak{C}_R(K_0(R))$ is equal to $R$, $Z(R)$, or $K_0(R)+Z(R)$.
\end{cor}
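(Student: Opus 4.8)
The plan is to mimic, step for step, the proof of Corollary \ref{cor1}, replacing $T(R)$ by $K_0(R)$ and Theorem \ref{thm2} by Theorem \ref{thm4}. First I would dispose of the two easy alternatives: if $K_0(R)\subseteq Z(R)$ then $\mathfrak{C}_R(K_0(R))=R$, and if $K_0(R)\nsubseteq Z(R)$ but $\mathfrak{C}_R(K_0(R))\subseteq Z(R)$ then, since $Z(R)\subseteq\mathfrak{C}_R(K_0(R))$ always, $\mathfrak{C}_R(K_0(R))=Z(R)$. So it remains to treat the case where $K_0(R)\nsubseteq Z(R)$ and there is some $b\in R\setminus Z(R)$ with $[b,K_0(R)]=0$. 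Because $R$ is centrally closed, $RC=R$ and $C=Z(R)$, so $b\in RC\setminus C$; hence condition (iii) of Theorem \ref{thm4} is satisfied, so all of (i)--(v) of that theorem hold: in particular $\dim_CRC=4$, $*$ is of the transpose type and of the first kind, and $[K_0(R),K_0(R)]=0$, i.e.\ $K_0(R)+Z(R)\subseteq\mathfrak{C}_R(K_0(R))$. Since now $R$ is a $4$-dimensional central simple algebra over $C$, either $R\cong\text{\rm M}_2(C)$ or $R$ is a $4$-dimensional central division algebra, and I would split into these two cases.

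In the split case $R\cong\text{\rm M}_2(C)$, with $*$ of the transpose type and of the first kind, Lemma \ref{lem3}(i) gives $K_0(R)=C k_0$ with $k_0=\left[\begin{smallmatrix}0 & 1\\ -\pi^{-1} & 0\end{smallmatrix}\right]$, so $\mathfrak{C}_R(K_0(R))=\mathfrak{C}_R(k_0)$. A one-line computation with $x=\left[\begin{smallmatrix}a & b\\ c & d\end{smallmatrix}\right]$ shows that $[x,k_0]=0$ forces $a=d$ and $c=-\pi^{-1}b$, whence $x=aI+bk_0\in Z(R)+K_0(R)$; therefore $\mathfrak{C}_R(K_0(R))=K_0(R)+Z(R)$.

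In the division case, I would pass to $\widetilde R:=R\otimes_{Z(R)}F\cong\text{\rm M}_2(F)$ for a maximal subfield $F$ of $R$, exactly as in Case 2 of Corollary \ref{cor1} and in Lemma \ref{lem10}. The canonical involution on $\widetilde R$ is again of the first kind (it fixes $Z(\widetilde R)=1\otimes F$), and it is of the transpose type because $*$ of the transpose type on $R$ means $T(R)\nsubseteq Z(R)$ (Definition \ref{def4}), so $T(\widetilde R)=T(R)\otimes F\nsubseteq F$ and Lemma \ref{lem7}(iii) applies. Hence the split case gives $\mathfrak{C}_{\widetilde R}(K_0(\widetilde R))=K_0(\widetilde R)+Z(\widetilde R)=K_0(R)\otimes F+Z(R)\otimes F$. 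On the other hand $K_0(\widetilde R)=K_0(R)\otimes F$, and by expanding an element of $\widetilde R$ over a $C$-basis of $F$ one gets $\mathfrak{C}_{\widetilde R}(K_0(R)\otimes F)=\mathfrak{C}_R(K_0(R))\otimes F$. Comparing $C$-dimensions yields $\dim_C\mathfrak{C}_R(K_0(R))=\dim_C(K_0(R)+Z(R))$, and since $K_0(R)+Z(R)\subseteq\mathfrak{C}_R(K_0(R))$ we are forced into equality.

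I do not expect a genuine obstacle: the argument is a near-verbatim transcription of the proof of Corollary \ref{cor1}, and is in fact marginally shorter because $K_0(R)$ is automatically one-dimensional over $C$ in the split case and the inclusion $K_0(R)+Z(R)\subseteq\mathfrak{C}_R(K_0(R))$ is handed to us directly by Theorem \ref{thm4}. The only points that deserve a moment's care are the verification that the canonical involution on $\widetilde R$ is still of the transpose type of the first kind (handled as in Lemma \ref{lem10}) and the base-change identity $\mathfrak{C}_{\widetilde R}(V\otimes_C F)=\mathfrak{C}_R(V)\otimes_C F$ for a $C$-subspace $V$; both are routine.
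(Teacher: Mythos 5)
Your proposal is correct and follows essentially the same route as the paper's proof: dispose of the trivial alternatives, invoke Theorem \ref{thm4}(v) to get $\dim_C R=4$ with $*$ of the transpose type and first kind, compute the centralizer directly in $\text{\rm M}_2(C)$ via Lemma \ref{lem3}(i), and reduce the division case to the split case by tensoring with a maximal subfield and using $\mathfrak{C}_{\widetilde R}(K_0(R)\otimes F)=\mathfrak{C}_R(K_0(R))\otimes F$. No gaps.
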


\begin{proof}
Clearly, if $K_0(R)\subseteq Z(R)$, then $\mathfrak{C}_R(K_0(R))=R$. Assume that $K_0(R)\nsubseteq Z(R)$.
If $\mathfrak{C}_R(K_0(R))\subseteq Z(R)$, then $\mathfrak{C}_R(K_0(R))=Z(R)$. Hence we assume that $\mathfrak{C}_R(K_0(R))\nsubseteq Z(R)$.
That is, there exists $b\in R\setminus Z(R)$ such that $[b, K_0(R)]=0$.
In view Theorem \ref{thm4} (v), $\dim_CR=4$, $*$ is of the transpose type on $R$, and $*$ is of the first kind.
In this case, $Z(R)=C$. Also, one of the following holds:

Case 1:\ $R\cong \text{\rm M}_2(C)$. As given in Lemma \ref{lem3},
$$
K_0(R)=\{\left[
\begin{array}{cc}
0 & \gamma \\
-\pi^{-1}\gamma & 0%
\end{array}%
\right]\mid  \gamma\in C\}.
$$
Let $x\in R$. Then $x\in \mathfrak{C}_R(K_0(R))$ iff $\big[x, \left[
\begin{array}{cc}
0 & 1\\
-\pi^{-1}&0%
\end{array}%
\right]\big]=0$. A direct computation shows that $x\in K_0(R)+C$. That is, $ \mathfrak{C}_R(K_0(R))=K_0(R)+Z(R)$.

Case 2:\ $R$ is a $4$-dimensional central division algebra. Note that $*$ is of the transpose type on $R$.
Let $F$ be a maximal subfield of $R$, and let $\widetilde R:=R\otimes_{Z(R)}F\cong \text{\rm M}_2(F)$.  As given in Case 1, we have
$$
 \mathfrak{C}_{\widetilde R}(K_0({\widetilde R}))=K_0({\widetilde R})+Z({\widetilde R})=K_0(R)\otimes F+Z(R)\otimes F.
$$
On the other hand,
$
 \mathfrak{C}_{\widetilde R}(K_0({\widetilde R}))= \mathfrak{C}_{\widetilde R}(K_0(R)\otimes F)=\mathfrak{C}_R(K_0(R))\otimes F.
$
Hence we get $\mathfrak{C}_R(K_0(R))=K_0(R)+Z(R)$.
\end{proof}

\begin{cor}\label{cor4}
If $R$ is centrally closed, then $\mathfrak{C}_R(K_0(R)^2)$ is equal to either $R$ or $Z(R)$.
\end{cor}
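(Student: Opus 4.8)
The plan is to reduce to the trace case already settled in Corollary~\ref{cor3}. First, by Lemma~\ref{lem8}, $K_0(R)^2$ is a Lie ideal of $R$, so Lemma~\ref{lem9} applies to $L=K_0(R)^2$ and gives three possibilities: $\mathfrak{C}_R(K_0(R)^2)=R$, $\mathfrak{C}_R(K_0(R)^2)=Z(R)$, or $R$ is exceptional (with $K_0(R)^2C=[a,RC]=Ca+C$ for every $a\in K_0(R)^2\setminus Z(R)$). In the first two cases there is nothing more to prove, so it remains to handle the third.

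In the third case $R$ is exceptional, hence $\mathrm{char}\,R=2$; therefore $x-x^*=x+x^*$ for all $x\in R$, so that $K_0(R)=T(R)$ and $K_0(R)^2=T(R)^2$. Since $R$ is centrally closed, Corollary~\ref{cor3} gives $\mathfrak{C}_R(T(R)^2)\in\{R,Z(R)\}$, that is, $\mathfrak{C}_R(K_0(R)^2)\in\{R,Z(R)\}$; this is incompatible with the conditions of the third case and at the same time delivers the desired conclusion. Hence in every case $\mathfrak{C}_R(K_0(R)^2)$ equals $R$ or $Z(R)$.

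I do not anticipate any real obstacle, the whole point being that the exceptional obstruction in Lemma~\ref{lem9} lives in characteristic~$2$, where $K_0$ and $T$ coincide. If one prefers a self-contained argument avoiding Corollary~\ref{cor3}, one simply repeats the proof of that corollary with $K_0$ in place of $T$: in the third case $\dim_CK_0(R)^2C=2$ and $K_0(R)\nsubseteq Z(R)$, so (being in characteristic~$2$ with $\dim_CRC=4$ and $K_0(R)=T(R)\nsubseteq Z(R)$) the involution $*$ is of transpose type by Lemmas~\ref{lem7} and~\ref{lem6}; were $*$ of the first kind, Theorem~\ref{thm4} would force $K_0(R)^2\subseteq Z(R)$, a contradiction, so $*$ is of the second kind, and choosing $\beta\in C$ with $\beta\neq\beta^*$ and a nonzero $*$-ideal $I$ with $\beta I\subseteq R$, the identity
$$(\beta^*-\beta)\,K_0(R)\,x=K_0(R)\bigl(\beta^*(x-x^*)-(\beta x-(\beta x)^*)\bigr)\subseteq K_0(R)^2C\qquad(x\in I),$$
used twice, yields $(\beta^*-\beta)^2I^2\subseteq K_0(R)^2C$, whence $R=I^2C\subseteq K_0(R)^2C$ contradicts $\dim_CK_0(R)^2C=2$. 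The only mild care needed along this alternative route is the bookkeeping of signs in characteristic~$2$.
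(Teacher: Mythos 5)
Your argument is correct, and your main route is genuinely different from the paper's. The paper handles the third alternative of Lemma~\ref{lem9} by redoing, for $K_0$, essentially the whole analysis of Corollary~\ref{cor3}: it first rules out the second kind via the displayed identity $(\beta^*-\beta)K_0(R)x\subseteq K_0(R)^2C$, then rules out the transpose type via Theorem~\ref{thm4}, and finally rules out the symplectic type via Lemmas~\ref{lem7} and~\ref{lem6} (symplectic plus $K_0(R)^2\nsubseteq Z(R)$ forces $\operatorname{char}R\neq2$, contradicting exceptionality). You instead observe that the only problematic alternative in Lemma~\ref{lem9} occurs when $R$ is exceptional, hence $\operatorname{char}R=2$, where $K_0(R)=T(R)$ and $K_0(R)^2=T(R)^2$, so Corollary~\ref{cor3} finishes immediately. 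This is shorter and makes transparent why the skew-trace statement is not really a new case; what it buys less of is information, since the paper's longer route also pins down exactly which configurations (kind and type of $*$) are being excluded. Two minor points: the third alternative of Lemma~\ref{lem9} does not by itself assert that $\mathfrak{C}_R(K_0(R)^2)\notin\{R,Z(R)\}$, so the phrase ``incompatible with the conditions of the third case'' is not needed (though it is in fact true, as that case entails $L\nsubseteq Z(R)$ and $\mathfrak{C}_R(L)\nsubseteq Z(R)$); and in your self-contained variant you invoke the transpose type before establishing that $*$ is of the first kind, whereas for a division algebra the type is only defined (Definition~\ref{def4}) once the first kind is known, so the order of those two steps should be swapped, as in the paper.
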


\begin{proof}
In view of Lemma \ref{lem8}, $K_0(R)^2$ is a Lie ideal of $R$. By Lemma \ref{lem9}, we have $\mathfrak{C}_R(K_0(R)^2)=R$, $\mathfrak{C}_R(K_0(R)^2)
=Z(R)$, or both $R$ is exceptional and $K_0(R)^2C=[a, RC]=Ca+C$
for any $a\in K_0(R)^2\setminus C$. Assume the third case, implying that $\dim_CK_0(R)^2C=2$ and $K_0(R)\nsubseteq Z(R)$.

Suppose on the contrary that $*$ is of the second kind. There exists $\beta\in C$ such that $\beta\ne \beta^*$. Let $I$ be a nonzero $*$-ideal of $R$ such that $\beta I\subseteq R$.
Then, for $x\in I$,
$$
(\beta^*-\beta)K_0(R)x=K_0(R)\big(\beta^*(x-x^*)-(\beta x-(\beta x)^*)\big)\subseteq K_0(R)^2C.
$$
Applying the same argument, we get $(\beta^*-\beta)^2yx\subseteq K_0(R)^2C$ for all $x, y\in I$. Hence $I^2C  \subseteq K_0(R)^2C$.
Note that $I^2C=R$. Therefore $K_0(R)^2C=R$, a contradiction. This proves that $*$ is of the first kind.

If $*$ is of the transpose type on $R$, it follows from Theorem \ref{thm4} that $K_0(R)^2\subseteq C$, a contradiction. Thus $*$ is of the symplectic type.
By Lemmas \ref{lem7} and \ref{lem6}, it follows that $\text{\rm char}\,R\ne 2$, a contradiction. This completes the proof.
\end{proof}

\section{Theorem C}
Let $R$ be a prime ring with involution $*$. We say that $R$ satisfies  a
$*$-generalized polynomial $f$ if $f(X_1,\ldots,X_n, Y_1,\ldots,Y_n)$ is a generalized polynomial in noncommutative variables
$X_i$ and $Y_i$, $1\leq i\leq n$, with coefficients in $Q_s(R)$ and
$
f(x_1,\ldots,x_n, x_1^*,\ldots,x_n^*)=0
$
for all $x_i\in R$ (see \cite{chuang1989}). We also say that $R$ satisfies the
$*$-generalized polynomial $f(X_1,\ldots,X_n, X_1^*,\ldots,X_n^*)$. If $f$ has coefficients in $C$, we say that $R$ satisfies the $*$-polynomial
$f(X_1,\ldots,X_n, X_1^*,\ldots,X_n^*)$.

\begin{pro} ( \cite[Proposition 4]{chuang1989}) \label{pro1}
Suppose that $R$ is a prime ring with involution $*$, and that $f$ is a $*$-generalized polynomial. If $f$ vanishes on a nonzero ideal of $R$,
then $f$ vanishes on $Q_s(R)$.
\end{pro}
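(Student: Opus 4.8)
The plan is to reduce Proposition~\ref{pro1} to the ``whole ring'' case and then to a degree one lemma, imitating the classical Amitsur--Martindale treatment of generalized polynomial identities. First I would replace $I$ by $I\cap I^*$, which is again a nonzero ideal because $R$ is prime (so $I^*I\subseteq I\cap I^*$ and $I^*I\neq 0$); thus we may assume $I$ is a $*$-ideal. Then $I$ is itself a prime ring with involution, $Q_s(I)=Q_s(R)$, the extended centroid of $I$ is still $C$, and the extension of $*$ to $Q_s(I)$ agrees with the given involution on $Q_s(R)$, so the coefficients of $f$ lie in $Q_s(I)$. Hence it suffices to prove the statement when the ideal is all of $R$: if $f(x_1,\ldots,x_n,x_1^*,\ldots,x_n^*)=0$ for all $x_i\in R$, then the same holds for all $x_i\in Q_s(R)$.

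For this, fix $q_1,\ldots,q_n\in Q_s(R)$ and pick a nonzero $*$-ideal $U$ of $R$ with $Uq_i\subseteq R$, $q_iU\subseteq R$ (and hence $Uq_i^*\subseteq R$, $q_i^*U\subseteq R$) for all $i$. For $a_i\in U$ we have $a_iq_i\in R$, so
$$
f\big(a_1q_1,\ldots,a_nq_n,(a_1q_1)^*,\ldots,(a_nq_n)^*\big)=0,
$$
and expanding $(a_iq_i)^*=q_i^*a_i^*$ turns this into a $*$-generalized polynomial identity in the variables $a_1,\ldots,a_n$ (and their $*$'s), with coefficients assembled from the $q_i$, $q_i^*$ and the coefficients of $f$, valid on $U$. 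I would then induct on $n$ and on the degree of $f$: a partial multilinearization lowers the degree, and each variable $a_i$ is removed by means of the key lemma that a $*$-generalized polynomial which, after collecting terms in a single variable $x$, has the shape $\sum_k p_k x r_k+\sum_l u_l x^* w_l+c$ and vanishes for $x$ in a nonzero ideal of $R$ must vanish identically as a $*$-generalized polynomial of $Q_s(R)$; in particular its constant term $c$ vanishes in $Q_s(R)$. Stripping off $a_n,\ldots,a_1$ in turn leaves precisely $f(q_1,\ldots,q_n,q_1^*,\ldots,q_n^*)=0$.

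The heart of the matter, and the step I expect to be hardest, is this degree one lemma. In the plain (non-involution) situation it is the familiar fact that $\sum_k p_k x r_k=0$ on a nonzero ideal already forces, inside $Q_s(R)$, the $C$-linear dependences among the $r_k$ (and, passing to the opposite ring, among the $p_k$), proved from two-sided density of the ideal together with primeness. In the $*$-case the $x$-linear and $x^*$-linear parts cannot in general be separated, so the density argument must be run on both at once, tracking how a relation linear in $x^*$ transforms under left and right multiplication by ideal elements; when $*$ is of the second kind a scalar $\lambda\in C$ with $\lambda^*\neq\lambda$ decouples the two parts and one falls back on the plain lemma, while the general case is exactly the bookkeeping carried out in \cite[Proposition~4]{chuang1989}. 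Granting it, the induction above completes the proof.
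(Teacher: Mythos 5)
This proposition is quoted from \cite[Proposition 4]{chuang1989}; the paper offers no proof of its own, so there is nothing internal to compare your argument against. Judged on its own terms, your outline assembles the right ingredients (reduction to a $*$-ideal, $Q_s(I)=Q_s(R)$, multilinearization, a Martindale-type density lemma, decoupling $x$ and $x^*$ via a scalar $\lambda\ne\lambda^*$ when $*$ is of the second kind), but the central mechanism does not work as stated, and the hardest step is deferred back to the very reference being proved.

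Concretely: after substituting $x_i=a_iq_i$ with $a_i\in U$, you obtain a $*$-generalized polynomial identity in $a_1,\dots,a_n$ whose coefficients are built from the $q_i$, $q_i^*$ and the coefficients of $f$ --- but the quantity you want, $f(q_1,\dots,q_n,q_1^*,\dots,q_n^*)$, never appears as the constant term (or any coefficient) of that identity. The constant term is still the original $c$. So ``stripping off $a_n,\dots,a_1$ in turn'' via your degree-one lemma yields linear-dependence relations among the new coefficients, not the desired evaluation $f(q_1,\dots,q_n,q_1^*,\dots,q_n^*)=0$; the conclusion simply does not fall out of this substitution. The standard mechanism (Chuang's, and the treatment in \cite{beidar1996}) is different: one replaces the variables by elements of $Q_s(R)$ \emph{one at a time}, using as the engine a tensor-product statement of the form ``if $\sum_k p_kxr_k+\sum_l u_lx^*w_l=0$ for all $x$ in a nonzero ideal, then the same expression vanishes for all $x\in Q_s(R)$'' (so that $x_n\in I$ may be upgraded to $q_n\in Q_s(R)$ with the other variables held in $I$, and one then inducts on the number of variables already upgraded), together with a careful multilinearization that accounts for the non-multilinear components when $C$ is small. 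That one-variable replacement lemma is precisely the nontrivial content of Chuang's Proposition 4, and your proposal explicitly concedes it (``the general case is exactly the bookkeeping carried out in \cite[Proposition 4]{chuang1989}''), so the argument is circular at its core. As written, the proposal is a plausible road map rather than a proof.
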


To keep the statements of our theorems below neat, throughout this section, we always make the following assumption:\vskip4pt

{\it Let $R$ be a division ring with involution $*$ and let
$M$ be a noncentral $*$-subring of $R$ such that $M$ is invariant under all inner automorphisms of $R$.}\vskip4pt

We begin with the following.

\begin{thm}\label{thm9}
If $M$ is a PI-ring, then the following hold:

(i)\ $R=MZ(R)$, where $Z(R)$ is the quotient field of $Z(M)$;

(ii)\ If $R$ is equipped with involution $*$, then $R$ and $M$ satisfy the same
$*$-generalized polynomials with coefficients in $R$.
\end{thm}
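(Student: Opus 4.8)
The plan is to prove (i) by identifying the division subring of $R$ generated by $M$ via the Brauer--Cartan--Hua theorem and then invoking the classical (Posner) theory of prime PI rings, and then to deduce (ii) almost immediately from Chuang's Proposition~\ref{pro1}, using the identification $Q_s(M)=R$ that (i) provides.

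For (i), I would first let $D$ be the division subring of $R$ generated by $M$. Every inner automorphism of $R$ is a ring automorphism of $R$ fixing $M$ setwise, so it carries $D$ onto a division subring of $R$ still containing $M$, hence onto $D$ itself; thus $D$ is invariant under all inner automorphisms of $R$. Since $M$ is noncentral, $D\nsubseteq Z(R)$, so the Brauer--Cartan--Hua theorem forces $D=R$. As a PI domain $M$ is an Ore domain, so $\{ab^{-1}\mid a,b\in M,\ b\neq 0\}$ is already a division subring of $R$ containing $M$, hence containing $D=R$; thus it equals $R$, i.e.\ $R$ is the two-sided classical ring of quotients of $M$. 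Now Posner's theorem says that the classical ring of quotients of the prime PI ring $M$ is obtained by inverting the nonzero central elements, so $R=M\,(Z(M)\setminus\{0\})^{-1}$, a central simple algebra over the quotient field of $Z(M)$. Finally $Z(M)\subseteq Z(R)$ (an element of $Z(M)$ commutes with $M$, hence with $R=\mathrm{Frac}(M)$), and conversely, writing any $c\in Z(R)$ as $c=mz^{-1}$ with $m\in M$, $z\in Z(M)\setminus\{0\}$ and using $z^{-1}\in Z(R)$ gives $m=cz\in M\cap Z(R)=Z(M)$; hence $Z(R)=\mathrm{Frac}(Z(M))$ and $R=MZ(R)$, proving (i).

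For (ii), observe that $M$ is a prime ring (a domain) carrying the involution $*$ because $M^*=M$. By part (i), $R$ is the classical ring of quotients of $M$, and for a prime PI ring the classical, symmetric, and maximal rings of quotients all coincide (consistent with the fact $Q_s=RC$ recorded for prime PI rings in the Introduction); hence $Q_s(M)=R=Q_s(R)$, so a $*$-generalized polynomial with coefficients in $R$ is exactly a $*$-generalized polynomial for $M$. One inclusion is trivial: if $f$ vanishes on $R$, it vanishes on $M\subseteq R$. Conversely, if $f$ vanishes on $M$, then $f$ vanishes on the nonzero ideal $M$ of $M$, so Proposition~\ref{pro1} (applied with $M$ in the role of $R$) shows that $f$ vanishes on $Q_s(M)=R$. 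Thus $R$ and $M$ satisfy the same $*$-generalized polynomials with coefficients in $R$.

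The substantive ingredients here are all external---Brauer--Cartan--Hua (quoted in the excerpt), Posner's theorem, and the coincidence $Q_{cl}(M)=Q_s(M)$ for prime PI rings---so the difficulty is essentially bookkeeping: one must be sure that the division subring generated by $M$ really is $\mathrm{Frac}(M)$ and that this is $Q_s(M)$, since that is precisely what allows the coefficients of $f$ to lie in $R$ when invoking Proposition~\ref{pro1}. If one prefers to avoid Proposition~\ref{pro1} in (ii), the same conclusion follows directly: given $r_1,\dots,r_n\in R=MZ(R)=M\,\mathrm{Frac}(Z(M))$, put them over a common denominator $z\in Z(M)\setminus\{0\}$, which may be taken symmetric since $z^{-1}=z^*(zz^*)^{-1}$ with $zz^*\in Z(M)$ (here one uses that $Z(M)$ is $*$-invariant); reduce $f$ to its homogeneous components---legitimate because $Z(R)=\mathrm{Frac}(Z(M))$ is infinite, as otherwise $R$ would be a finite, hence commutative, division ring, contradicting that $M$ is noncentral---and then factor the appropriate power of the central $z^{-1}$ out of each homogeneous component of $f$ evaluated at $r_1,\dots,r_n$, reducing to the vanishing of $f$ on $M$.
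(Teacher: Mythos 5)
Your proof is correct and follows essentially the same route as the paper's: both parts rest on the Brauer--Cartan--Hua theorem combined with the Posner--Rowen description of the central (classical) quotient ring of a prime PI ring for (i), and on Chuang's Proposition~\ref{pro1} applied to $M$ itself, using $Q_s(M)=MC_M=R$, for (ii). The only difference is organizational: the paper first checks $[M,M]\neq 0$ and forms $MC_M$ before invoking Brauer--Cartan--Hua, whereas you apply Brauer--Cartan--Hua to the division subring generated by $M$ and only afterwards identify it with the central localization.
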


\begin{proof}
(i)\ We claim that $[M, M]\ne 0$. Otherwise, $[M, M]=0$.
Let $m\in M$ and $u\in R\setminus \{0, -1\}$. Then there exist $m_1, m_2\in M$ such that
$$
um=m_1u\ \ \text{\rm and}\ \ (1+u)m=m_2(1+u).
$$
Then $m=(m_2-m_1)u+m_2$ and so $(m_2-m_1)[m, u]=0$. Therefore, either $m_1=m_2$ or $[m, u]=0$.
The former case implies that $m=m_2$ and so $[m, u]=0$. This implies that $m\in Z(R)$. Thus $M\subseteq Z(R)$, a contradiction.

By \cite[Theorem 2 and Corollary 1]{rowen1973}, $Z(M)\ne 0$, $MC_M$ is a finite-dimensional central simple $C_M$-algebra and $Z(MC_M)=C_M$, where $C_M$ is the quotient field of $Z(M)$. In this case, $MC_M$ is a subdivision ring of $R$. Moreover, $uC_Mu^{-1}\subseteq C_M$
 for all $0\ne u\in R$, implying that $MC_M$ is invariant under all inner automorphisms of $R$.
Since $[M, M]\ne 0$, $MC_M$ is a noncentral subdivision ring of $R$, it follows from
 the Brauer-Cartan-Hua theorem that $R=MC_M$.
Clearly, $C_M=Z(R)$ and so $Z(R)$ is the quotient field of $Z(M)$.

(ii)\ It follows directly from (i) and Proposition \ref{pro1}.
\end{proof}

\begin{thm}\label{thm10}
If
$
[T(M), T(M)]=0\ \ (resp.\ [K_0(M), K_0(M)]=0),
$
then $T(R)^2\subseteq Z(R)$  (resp. $K_0(R)^2\subseteq Z(R)$).
\end{thm}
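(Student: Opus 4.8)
The plan is to reduce to the case where $M$ is a PI-ring — where Theorem~\ref{thm9} takes over — and then to quote Theorems~\ref{thm2} and~\ref{thm4}. I treat the symmetric case $[T(M),T(M)]=0$ in detail; the skew case is identical after replacing $T$ by $K_0$ and Theorem~\ref{thm2} by Theorem~\ref{thm4}.

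The first step is to notice that $[T(M),T(M)]=0$ says precisely that $M$ satisfies the $*$-polynomial $[X+X^{*},Y+Y^{*}]$, and that this is a \emph{proper} $*$-polynomial: its expansion in the free $*$-ring on $X,Y$ is a nonzero element (for instance the monomial $XY$ occurs with coefficient $1$ and is not cancelled). Since $M$ is a subring of a division ring it has no zero divisors, hence is a prime ring. By Amitsur's theorem that a ring with involution satisfying a proper $*$-polynomial identity also satisfies an ordinary polynomial identity, $M$ is a PI-ring.

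Once $M$ is known to be a PI-ring, Theorem~\ref{thm9} gives $R=MZ(R)$ and, crucially, that $R$ and $M$ satisfy the same $*$-generalized polynomials with coefficients in $R$. Applying this to the $*$-polynomial $[X+X^{*},Y+Y^{*}]$, whose coefficients $\pm1$ lie in $M\subseteq R$, yields $[T(R),T(R)]=0$. Now $R$, being a division ring, is prime; if $T(R)\subseteq Z(R)$ then trivially $T(R)^{2}\subseteq Z(R)$, while if $T(R)\nsubseteq Z(R)$ the equivalence (i)$\Leftrightarrow$(v) of Theorem~\ref{thm2} upgrades $[T(R),T(R)]=0$ to $T(R)^{2}\subseteq Z(R)$, as required. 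The skew case runs word for word with Theorem~\ref{thm4} in place of Theorem~\ref{thm2}, ending in $K_0(R)^{2}\subseteq Z(R)$.

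The main obstacle is the very first step: passing from commutativity of $T(M)$ (or of $K_0(M)$) to $M$ being a PI-ring. Granting Amitsur's $*$-identity theorem this is immediate; if instead one wants a self-contained argument one must work structurally inside $R$ — using that the division subring generated by $M$ is noncentral and invariant under all inner automorphisms, hence equals $R$ by the Brauer-Cartan-Hua theorem, and that (at least when $\text{\rm char}\,R\neq2$) $T(M)$ spans an abelian Lie ideal of the additive span of the symmetric and skew elements of $M$ — in order to force $\dim_{Z(M)}M<\infty$, which is the delicate point. Everything after ``$M$ is a PI-ring'' is a direct application of the already-established Theorems~\ref{thm9}, \ref{thm2} and~\ref{thm4}.
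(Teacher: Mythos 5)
Your proposal is correct and follows essentially the same route as the paper: invoke Amitsur's $*$-PI theorem to conclude $M$ is a PI-ring, transfer the identity $[X+X^{*},Y+Y^{*}]$ (resp.\ its skew analogue) from $M$ to $R$ via Theorem~\ref{thm9}(ii), and finish with Theorem~\ref{thm2} (resp.\ Theorem~\ref{thm4}), handling the trivial case $T(R)\subseteq Z(R)$ separately. Your extra remark verifying that the $*$-polynomial is proper is a harmless refinement the paper leaves implicit.
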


\begin{proof}
Assume that
\begin{eqnarray}
[x+x^*, y+y^*]=0,
\label{eq:3}
\end{eqnarray}
 for all $x, y\in M$. Since the prime ring $M$
satisfies the $*$-polynomial $[X+X^*, Y+Y^*]$, in view of \cite[Theorem 1]{amitsur1969}, $M$ is a PI-ring.
 It follows from Theorem \ref{thm9} (ii) that Eq.\eqref{eq:3} holds for all $x\in R$. That is, $[T(R), T(R)]=0$. In view of Theorem \ref{thm2}, either $T(R)\subseteq Z(R)$ or $T(R)^2\subseteq Z(R)$.  In either case, $T(R)^2\subseteq Z(R)$.
A similar argument can be applied to proving that $[K_0(R), K_0(R)]=0$ implies $K_0(R)^2\subseteq Z(R)$ by Theorem \ref{thm4}. Hence the proof is complete.
\end{proof}

The following generalizes the division case of Theorems \ref{thm33} and Theorems \ref{thm34}.

\begin{cor}\label{cor10}
(i)\ If
$
[T(M), T(M)]=0,
$
then $\dim_{Z(R)}R=4$ and $*$ is of the first kind.

(ii)\ If
$
[K_0(M), K_0(M)]=0
$
and $\text{\rm char}\,R\ne 2$,
then $\dim_{Z(R)}R=4$, $*$ is of the transpose type, and $*$ is of the first kind.
\end{cor}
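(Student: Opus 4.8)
The plan is to pass from $M$ to the ambient division ring $R$ using Theorem~\ref{thm10} and then invoke the prime-ring characterisations of Sections~3 and~4. First I would record two standing observations. Since $M$ is noncentral, $R$ cannot be commutative (otherwise $M\subseteq R=Z(R)$), so $R$ is a noncommutative prime ring. And since a division ring is simple, its extended centroid $C$ coincides with $Z(R)$ and $RC=R$; hence any conclusion of the form ``$\dim_C RC=4$'' coming from the earlier lemmas is literally the assertion $\dim_{Z(R)}R=4$.

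For part~(i): Theorem~\ref{thm10} applied to $[T(M),T(M)]=0$ yields $T(R)^2\subseteq Z(R)$, and then Lemma~\ref{lem1}(ii) gives $\dim_{Z(R)}R=4$. To see that $*$ is of the first kind I would split on whether $T(R)\subseteq Z(R)$. If $T(R)\subseteq Z(R)$, this is exactly the observation recorded after Definition~\ref{def4}, valid for any noncommutative prime ring with involution. If $T(R)\nsubseteq Z(R)$, then condition~(v) of Theorem~\ref{thm2} holds, whence by the equivalences there $R$ is exceptional and $*$ is of the first kind (condition~(iv)). Either way (i) follows.

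For part~(ii), where now $\text{\rm char}\,R\ne 2$: Theorem~\ref{thm10} applied to $[K_0(M),K_0(M)]=0$ gives $K_0(R)^2\subseteq Z(R)$, so Lemma~\ref{lem1}(ii) again gives $\dim_{Z(R)}R=4$. Next I would note $K_0(R)\nsubseteq Z(R)$: if $K_0(R)\subseteq Z(R)$, then Corollary~\ref{cor8} would force $\text{\rm char}\,R=2$, contrary to hypothesis. Thus $K_0(R)^2\subseteq Z(R)$ is precisely condition~(iv) of Theorem~\ref{thm4}, and by the equivalences there this delivers $\dim_C RC=4$, that $*$ is of the transpose type, and that $*$ is of the first kind, completing~(ii).

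The whole argument is short, as the heavy lifting is done by Theorem~\ref{thm10} together with Theorems~\ref{thm2} and~\ref{thm4}. The one place that needs care is the case split in part~(i): Theorem~\ref{thm2} is stated only under the running hypothesis $T(R)\nsubseteq Z(R)$, so the degenerate possibility $T(R)\subseteq Z(R)$ must be disposed of separately, and this is exactly what the elementary remark following Definition~\ref{def4} provides. A minor bookkeeping point, already flagged above, is identifying the extended centroid of the division ring $R$ with its centre, so that the dimension conclusions read as stated.
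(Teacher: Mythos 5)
Your proposal is correct and follows essentially the same route as the paper's own proof: Theorem~\ref{thm10} to get $T(R)^2\subseteq Z(R)$ (resp.\ $K_0(R)^2\subseteq Z(R)$), Lemma~\ref{lem1}(ii) for the dimension, the remark after Definition~\ref{def4} for the degenerate case $T(R)\subseteq Z(R)$, and Theorems~\ref{thm2} and~\ref{thm4} (with Corollary~\ref{cor8} ruling out $K_0(R)\subseteq Z(R)$) for the remaining conclusions. The extra bookkeeping about $C=Z(R)$ and $RC=R$ for a division ring is a sensible clarification but does not change the argument.
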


\begin{proof}
(i)\ In view of Theorem \ref{thm10}, we have $T(R)^2\subseteq Z(R)$ and hence $\dim_{Z(R)}R=4$ (see Lemma \ref{lem1} (ii)).
If $T(R)\subseteq Z(R)$, then $*$ is of the first kind (see the argument below Definition \ref{def4}). Suppose next that $T(R)\nsubseteq Z(R)$.
It follows from Theorem \ref{thm2} (iv) that $*$ is of the first kind.

(ii)\  In view of Theorem \ref{thm10}, we have $K_0(R)^2\subseteq Z(R)$ and hence $\dim_{Z(R)}R=4$ (see Lemma \ref{lem1} (ii)).
If $K_0(R)\subseteq Z(R)$, then, by Corollary \ref{cor8}, $T(R)\subseteq Z(R)$ and $\text{\rm char}\,R=2$, a contradiction.
Thus $K_0(R)\nsubseteq Z(R)$. It follows from Theorem \ref{thm4} (v) that $*$ is of the transpose type and $*$ is of the first kind.
\end{proof}

The following is the inner case of Theorem C.

\begin{thm}\label{thm13}
If
$
\mathfrak{C}_R(T(M))\nsubseteq Z(R)
$
(resp.\ $\mathfrak{C}_R(K_0(M))\nsubseteq Z(R))$,
 then $T(R)^2\subseteq Z(R)$  (resp. $K_0(R)^2\subseteq Z(R)$).
\end{thm}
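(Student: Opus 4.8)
The plan is to reduce to the already-proved Theorems \ref{thm2} and \ref{thm4} through the PI-case Theorem \ref{thm9}, imitating the proof of Theorem \ref{thm10}. I carry out the trace case; the skew case is entirely parallel, with $T$ replaced by $K_0$ and Theorem \ref{thm2} by Theorem \ref{thm4} (and using $T=K_0$ when $\text{\rm char}\,R=2$). If $T(R)\subseteq Z(R)$ then $T(R)^2\subseteq Z(R)$ and there is nothing to prove, so assume $T(R)\nsubseteq Z(R)$. By hypothesis fix $b\in R\setminus Z(R)$ with $[b,T(M)]=0$; equivalently, the prime ring $M$ satisfies the $*$-generalized polynomial identity $[b,\,X+X^*]$ with coefficient $b\in R=Q_s(R)$.

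The crux is to show that $M$ is a PI-ring. Since $M$ is noncentral and invariant under all inner automorphisms of $R$, the opening argument of the proof of Theorem \ref{thm9}(i) gives $[M,M]\neq 0$, and applying the Brauer--Cartan--Hua theorem to the subdivision ring $\mathfrak{C}_R(M)$ (which is invariant under all inner automorphisms, as $uMu^{-1}=M$ for every $0\neq u\in R$) gives $\mathfrak{C}_R(M)\subseteq Z(R)$; in particular $*$ is not the identity on $M$. One now checks that either $[b,\,X+X^*]$ is a non-trivial $*$-generalized polynomial identity for $M$, or it degenerates so as to force $T(M)\subseteq Z(R)$. In the first case, by the $*$-version of Martindale's theorem (see \cite{chuang1989} and the references therein) $M$ has nonzero socle; since $M$ lies in the division ring $R$ its only idempotents are $0$ and $1_R$, so the presence of a minimal idempotent forces $1_R\in M$ with $M=1_RM1_R$ a division ring that is finite-dimensional over its centre. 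In the second case $[x,x^*]=0$ for every $x\in M$, so by \cite[Theorem 1.1]{lee2018} $M$ is a central $*$-trace ring, whence every element of $M$ is quadratic over $Z(M)$ and again $M$ is finite-dimensional. Either way $M$ is a finite-dimensional central division algebra, in particular a PI-ring.

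With $M$ a PI-ring, Theorem \ref{thm9} applies: $R=MZ(R)$, and $R$ and $M$ satisfy the same $*$-generalized polynomial identities with coefficients in $R$. Since $M$ satisfies $[b,\,X+X^*]$, so does $R$; that is, $[b,T(R)]=0$ with $b\in RC\setminus C$ (recall $R=RC$ and $C=Z(R)$ because $R$ is a division ring). Hence condition (iii) of Theorem \ref{thm2} holds, and therefore so does condition (v): $T(R)^2\subseteq Z(R)$. In the skew case, after first disposing of the trivial case $K_0(R)\subseteq Z(R)$, the identical steps with Theorem \ref{thm4}(iii)$\Rightarrow$(iv) in place of Theorem \ref{thm2}(iii)$\Rightarrow$(v) give $K_0(R)^2\subseteq Z(R)$.

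The main obstacle is the middle step — showing that $M$ is a PI-ring. The delicate point is that the coefficient $b$ lies in $R$ but need not lie in $Q_s(M)$, so certifying non-triviality of the identity over $M$, and in the degenerate case carrying out the analysis that still forces $M$ to be finite-dimensional (which in characteristic $2$, where $T=K_0$, needs its own treatment), is where the real work lies; everything afterwards is routine bookkeeping with Theorems \ref{thm9}, \ref{thm2}, and \ref{thm4}.
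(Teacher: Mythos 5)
Your overall skeleton (prove $M$ is a PI-ring, use Theorem \ref{thm9}(ii) to transfer $[b,T(M)]=0$ to $[b,T(R)]=0$, then invoke Theorem \ref{thm2}, resp.\ Theorem \ref{thm4}) is exactly the paper's, and the endgame is fine. But the crucial middle step --- certifying that $M$ is PI --- is not actually carried out, and the route you sketch for it does not work as stated. The identity $[b,X+X^*]$ has its coefficient $b$ in $R$, not in $Q_s(M)$ (nor in $MC_M$), so the standard $*$-GPI machinery (Chuang/Martindale) for the prime ring $M$ simply does not apply to it: there is no meaningful notion of ``nontrivial $*$-GPI of $M$'' for a coefficient living outside $Q_s(M)$, so your dichotomy ``either the identity is nontrivial for $M$ or it degenerates to $T(M)\subseteq Z(R)$'' is not established and is not even clearly well posed. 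The fallback branch is also gapped: from $T(M)\subseteq Z(M)$ and $xx^*\in Z(M)$ you get that every element of $M$ is quadratic over $Z(M)$, but $M$ is only a subring of a division ring, not itself a division ring, so ``algebraic of bounded degree over the centre'' does not by itself yield finite-dimensionality or a PI. You correctly flag this step as ``where the real work lies,'' but the work is left undone.

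The paper closes this gap by a different and coefficient-free device. From $[b,T(M)]=0$ one gets $[b,T(M)^2]=0$, and $T(M)^2$ is a Lie ideal of $M$ (Lemma \ref{lem8}). If $[T(M)^2,T(M)^2]\ne 0$, then by Lemma \ref{lem5}(ii) the set $I:=M[T(M)^2,T(M)^2]M\subseteq T(M)^2+T(M)^4$ is a nonzero ideal of $M$ with $[b,I]=0$; since $[b,IM]=0$ this forces $I[b,M]=0$, hence $[b,M]=0$, hence $b\in\mathfrak{C}_R(M)$, and the Brauer--Cartan--Hua theorem (applied to the invariant subdivision ring $\mathfrak{C}_R(M)$, with $\mathfrak{C}_R(M)=R$ excluded because $M$ is noncentral) gives $b\in Z(R)$, a contradiction. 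Therefore $[T(M)^2,T(M)^2]=0$, which is a nontrivial $*$-\emph{polynomial} identity with scalar coefficients, so Amitsur's theorem \cite[Theorem 1]{amitsur1969} applies directly and $M$ is PI. You should replace your GPI dichotomy with an argument of this kind; as written, the proposal does not constitute a proof.
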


\begin{proof}
We only prove the trace case.
The skew trace case is similar by a slight modification.
Since $\mathfrak{C}_R(T(M))\nsubseteq Z(R)$, there exists $b\in R\setminus Z(R)$ such that $[b, x+x^*]=0$ for all
$x\in M$.
Then $[b, T(M)^2]=0$. Note that $T(M)^2$ is a Lie ideal of $M$ (see Lemma \ref{lem8}).

We claim that $[T(M)^2, T(M)^2]=0$. Suppose not. By Lemma \ref{lem5} (ii), we have
$$
I:=M[T(M)^2, T(M)^2]M\subseteq T(M)^2+T(M)^4
$$
is a nonzero ideal of $M$. Then $[b, I]=0$. Since $[b, IM]=0$, we get $I[b, M]=0$ and so $[b, M]=0$.

Note that $\mathfrak{C}_R(M)$ is a subdivision ring of $R$ and $u\mathfrak{C}_R(M)u^{-1}\subseteq \mathfrak{C}_R(M)$ for all $u\in R^\times$.
By the Brauer-Cartan-Hua theorem, either $\mathfrak{C}_R(M)\subseteq Z(R)$ or $\mathfrak{C}_R(M)=R$.
The latter case implies that $M$ is central, a contradiction. Thus $\mathfrak{C}_R(M)\subseteq Z(R)$ and so
 $b\in Z(R)$, a contradiction. This proves our claim.

Therefore, $M$ satisfies a nontrivial $*$-polynomial and so $M$ is a PI-ring (see \cite[Theorem 1]{amitsur1969}).
In view of Theorem \ref{thm9} (ii), we get $[b, T(R)]=0$ since $[b, x+x^*]=0$ for all
$x\in M$. We are now done by Theorem \ref{thm2}.
\end{proof}

The following is Theorem C mentioned in the introduction.

\begin{thm}\label{thm14}
If $d$ is a nonzero derivation of $R$ such that
$
d(T(M))=0
$
(resp.\ $d(K_0(M))=0$), then $T(R)^2\subseteq Z(R)$  (resp. $K_0(R)^2\subseteq Z(R)$).
\end{thm}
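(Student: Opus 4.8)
The plan is to establish the trace statement — the skew statement being proved in the same way with $K_0$ in place of $T$ throughout, the two merging in characteristic $2$, where $T(R)=K_0(R)$ — and to do so by reducing, at the end, to a situation already handled: if $[T(M),T(M)]=0$ then Theorem~\ref{thm10} gives $T(R)^2\subseteq Z(R)$, and if $\mathfrak{C}_R(T(M))\nsubseteq Z(R)$ then Theorem~\ref{thm13} does. So assume $d$ is a nonzero derivation of $R$ with $d(T(M))=0$. Being a derivation that kills $T(M)$, $d$ kills every product of elements of $T(M)$, hence all of $T(M)^2+T(M)^4$; also $T(M)^2$ is a Lie ideal of $M$ by Lemma~\ref{lem8}.

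The heart of the proof is to show $[T(M)^2,T(M)^2]=0$. Suppose not. Applying Lemma~\ref{lem5}(ii) to the Lie ideal $L=T(M)^2$ of $M$, the ideal $I:=M[T(M)^2,T(M)^2]M$ of $M$ is nonzero and lies in $T(M)^2+T(M)^4$, so $d(I)=0$. Then for $x\in I$ and $m\in M$ we get $0=d(xm)=x\,d(m)$, so $I\,d(M)=0$; since $R$ is a division ring and $I\ne 0$, this forces $d(M)=0$. Thus $M$ is contained in $R^d:=\{x\in R\mid d(x)=0\}$, which is a subdivision ring of $R$; hence the subdivision ring of $R$ generated by $M$ lies in $R^d$. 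But that subdivision ring is noncentral and is invariant under all inner automorphisms of $R$ (because $M$ is), so by the Brauer--Cartan--Hua theorem it equals $R$. Therefore $d=0$, a contradiction, proving $[T(M)^2,T(M)^2]=0$.

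Now invoke Lemma~\ref{lem5}(iv) with $U=V=T(M)^2$: either $M$ is exceptional, or $T(M)^2\subseteq Z(M)$. In the latter case, $\mathfrak{C}_R(M)\subseteq Z(R)$ by the Brauer--Cartan--Hua theorem (as $M$ is noncentral and inner-invariant), so $Z(M)\subseteq Z(R)$ and hence $T(M)^2\subseteq Z(R)$; then if $T(M)\subseteq Z(R)$ we have $\mathfrak{C}_R(T(M))=R\nsubseteq Z(R)$ and Theorem~\ref{thm13} applies, while if $T(M)\nsubseteq Z(R)$ we pick $t\in T(M)\setminus Z(R)$, note $t\ne 0$, $t^2\in Z(R)$ and $tT(M)\subseteq T(M)^2\subseteq Z(R)$, so $T(M)=t^{-1}\big(tT(M)\big)\subseteq Z(R)t$, whence $\overline{T(M)}\subseteq Z(R)+Z(R)t$ is commutative, $[T(M),T(M)]=0$, and Theorem~\ref{thm10} applies. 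If instead $M$ is exceptional, then $M$ is a PI-ring, so Theorem~\ref{thm9} gives $R=MZ(R)$ and (using $Q_s(M)=MC_M=MZ(R)=R$, where $C_M$ is the extended centroid of $M$) $Q_s(M)=R$; running the proofs of Lemmas~\ref{lem26} and~\ref{lem12} inside the noncommutative prime ring $M$ — legitimate since $Q_s(M)=R$, so Lemma~\ref{lem4} is available for $M$ — shows that $*$ is of the first kind and that $d$ vanishes on $Z(R)$, so by the Skolem--Noether theorem $d(x)=[b,x]$ for some $b\in R\setminus Z(R)$; then $[b,T(M)]=0$ again gives $\mathfrak{C}_R(T(M))\nsubseteq Z(R)$, and Theorem~\ref{thm13} finishes.

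I expect the main obstacle to be the exceptional case: Theorems~\ref{thm10} and~\ref{thm13} cannot be applied there until the first-kindness of $*$ and the central constancy of $d$ have been transferred between $M$ and $R$, and making that rigorous — identifying $Q_s(M)$ with $R$ through Theorem~\ref{thm9} so that Lemmas~\ref{lem4},~\ref{lem12},~\ref{lem26} may be quoted for $M$, and then applying Skolem--Noether — is the delicate part; the division-ring reduction itself (the second paragraph) is short.
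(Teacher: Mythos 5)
Your proof is correct, and although its overall architecture runs parallel to the paper's (force $[T(M)^2,T(M)^2]=0$, make $M$ a PI-ring, transfer to $R$, render $d$ inner via Skolem--Noether, and finish with Theorems~\ref{thm10} and~\ref{thm13}), several of your key steps are genuinely different. The paper invokes \cite[Theorem 1.6]{lee2025} to split into ``$T(M)^4$ contains a nonzero ideal of $M$'' versus ``$[T(M)^2,T(M)^2]=0$'', and in the first case derives $[u^{-1}d(u),M]=0$ from conjugation-invariance and falls back on Posner's theorem; you instead observe that $d(M)=0$ is outright impossible, because the division subring generated by $M$ lies in the division subring of constants of $d$, is inner-invariant and noncentral, hence equals $R$ by Brauer--Cartan--Hua. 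That is cleaner, eliminates the first horn of the dichotomy entirely, and needs only Lemma~\ref{lem5}(ii) (exactly as in the paper's own proof of Theorem~\ref{thm13}) rather than the external theorem. Your further split via Lemma~\ref{lem5}(iv) into the non-exceptional case --- handled elementarily, since $T(M)^2\subseteq Z(M)\subseteq Z(R)$ forces $T(M)\subseteq Z(R)t$ and hence $[T(M),T(M)]=0$, so Theorem~\ref{thm10} applies --- and the exceptional case is extra structure the paper does not need; its Case~2 runs the central computation over $Z(M)$ uniformly. Your exceptional case does essentially what the paper's Case~2 does, rephrased through the identification $Q_s(M)=MC_M=R$, and the point you rightly flag as delicate --- that $d$ is not a derivation of $M$, but the proofs of Lemmas~\ref{lem26} and~\ref{lem12} only ever use $d(T(M))=0$, the Leibniz rule on $R\supseteq MC_M$, and Lemma~\ref{lem4} applied to $M$ --- does go through. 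Net effect: you avoid both \cite[Theorem 1.6]{lee2025} and Posner's theorem at the cost of a slightly longer case analysis.
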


\begin{proof}
Assume that $d(T(M))=0$. Then $d(T(M)^4)=0$. Since $T(M)^2$ is a Lie ideal of the prime ring $M$ (see Lemma \ref{lem8}), it follows from \cite[Theorem 1.6]{lee2025} that either
$T(M)^4$ contains a nonzero ideal, say $I$, of $M$ or $[T(M)^2, T(M)^2]=0$.

Case 1:\ $T(M)^4$ contains a nonzero ideal $I$ of $M$.
Then $d(I)=0$ and so $d(IM)=0$. This implies that $Id(M)=0$ and so $d(M)=0$.
Let $m\in M$ and $u\in R^\times$. Then $umu^{-1}\in M$ and so $d(umu^{-1})=0$.
Therefore, $d(u)mu^{-1}+umd(u^{-1})=0$. Note that $d(u^{-1})=-u^{-1}d(u)u^{-1}$. We see that $[u^{-1}d(u), m]=0$. That is,
$[u^{-1}d(u), M]=0$. In particular, $[u^{-1}d(u), T(M)]=0$. If $u^{-1}d(u)\notin Z(R)$ for some $u\in R^\times$, it follows from Theorem \ref{thm13} that
$T(R)^2\subseteq Z(R)$. Suppose next that $u^{-1}d(u)\in Z(R)$ for all $u\in R^\times$. Since $R$ is a division ring, we get $[x, d(x)]=0$ for all $x\in R$.
By Posner's theorem (see \cite[Theorem 2]{posner1957}), either $d=0$ or $R$ is commutative, a contradiction.

Case 2:\  $[T(M)^2, T(M)^2]=0$. That is, $M$ satisfies the $*$-polynomial
$$
f:=\big[(X_1+X_1^*)(X_2+X_2^*), (Y_1+Y_1^*)(Y_2+Y_2^*)\big].
$$
In view of \cite[Theorem 1]{amitsur1969}, $M$ is a PI-ring. In view of Theorem \ref{thm9} (ii), $R$ also satisfies the $*$-polynomial $f$.
Hence $R$ is a division PI-ring, implying $\dim_{Z(R)}R<\infty$.

Let $\beta\in Z(M)$ and $x\in M$. Then $d(x+x^*)=0$, $d(\beta+\beta^*)=0$, and so
\begin{eqnarray}
  0 &=&d(\beta x+(\beta x)^*)\nonumber \nonumber \\
    &=&d(\beta)x+\beta d(x)+\beta^*d(x^*)+d(\beta^*)x \nonumber \\
   &=&(\beta-\beta^*)d(x).\nonumber
\end{eqnarray}
Thus, either $\beta=\beta^*$ or $d(x)=0$. If $d(M)=0$, then, by applying the same argument in Case 1, we are done in this case.
Hence we may assume that $\beta=\beta^*$ for all $\beta\in Z(M)$.

Let $\beta\in Z(M)$ and $x\in M$. Then $d(\beta x+(\beta x)^*)=0$ and so $d(\beta(x+x^*))=0$, implying $d(\beta)(x+x^*)=0$.
That is, $d(\beta)T(M)=0$. Note that $T(M)\ne 0$ since $M$ is noncentral. So $d(\beta)=0$. Thus $d=0$ on $Z(M)$. This implies that $d=0$ on $Z(R)$ as $Z(R)$ is the quotient field of $Z(M)$. By the Skolem-Noether theorem, $d$ is inner on $R$. By Theorem \ref{thm13}, $T(R)^2\subseteq Z(R)$.
This completes the trace case.

The skew trace case has almost the same argument by a slight modification.
\end{proof}

\section{Lie ideals: Theorem D}
We begin with the case of ideals.

\begin{thm}\label{thm17}
Let $R$ be a noncommutative prime ring with involution $*$, $d$ a nonzero derivation of $R$, and $I$ a nonzero ideal of $R$.
If $d(T(I))=0$ (resp. $d(K_0(I))=0$), then $T(R)^2\subseteq Z(R)$ (resp. $K_0(R)^2\subseteq Z(R)$).
\end{thm}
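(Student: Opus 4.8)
The plan is to reduce the ideal case to the already-established ring case (Corollaries \ref{cor5} and \ref{cor6}) by manufacturing, from the hypothesis $d(T(I))=0$, a derivation-type condition on all of $T(R)$, or else directly obtaining $T(R)^2\subseteq Z(R)$ by another route. First I would observe that $T(I)$ sits inside the Lie ideal structure: by Lemma \ref{lem8} applied to $I$, the set $T(I)^2$ is a Lie ideal of the ring $I$, hence (since $I$ is an ideal of the prime ring $R$) essentially a Lie ideal of $R$ in the sense controlled by Lemma \ref{lem5}(ii). From $d(T(I))=0$ we get $d(T(I)^2)=0$ and more generally $d(T(I)^{2k})=0$. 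Invoking \cite[Theorem 1.6]{lee2025} (as used in the proof of Theorem \ref{thm14}) to the Lie ideal $T(I)^2$ of the prime ring $I$, we split into two cases: either $T(I)^4$ contains a nonzero ideal $J$ of $I$ (hence a nonzero ideal of $R$, since a nonzero ideal of an ideal of a prime ring contains a nonzero ideal of the whole ring), or $[T(I)^2,T(I)^2]=0$.

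In the first case $d(J)=0$, and then $d(JR)=0$ forces $Jd(R)=0$, so $d(R)=0$ by primeness, contradicting $d\neq 0$ --- unless I have mis-set-up; more carefully, $d(J)=0$ together with $d(Jr)=d(J)r+Jd(r)$ gives $Jd(R)=0$, hence $d=0$, contradiction. So actually the first case cannot occur once $d\neq 0$, and we are forced into $[T(I)^2,T(I)^2]=0$. From here $I$ satisfies the nontrivial $*$-polynomial $\big[(X_1+X_1^*)(X_2+X_2^*),(Y_1+Y_1^*)(Y_2+Y_2^*)\big]$, so by \cite[Theorem 1]{amitsur1969} $I$ — and hence $R$, since $I$ generates the same PI-environment, $RC=IC$ — is a PI-ring. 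By Proposition \ref{pro1} (or the $IC=RC$ fact for prime PI-rings quoted in the introduction), the $*$-polynomial identity $[T(I)^2,T(I)^2]=0$ passes to $[T(R)^2,T(R)^2]=0$. Now either $T(R)\subseteq Z(R)$, in which case trivially $T(R)^2\subseteq Z(R)$; or $T(R)\nsubseteq Z(R)$, and since $[T(R),T(R)^2]$-type vanishing would need to be upgraded, I instead argue via $\dim_CRC=4$ (Lemma \ref{lem1}) and Theorem \ref{thm2}: the condition $d(T(R))=0$ is what we want, so let me reroute —

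The cleaner route, which I would actually take, is: once $R$ is known to be a PI-ring with $RC=IC$ finite-dimensional over $C$, extend $d$ to $Q_s(R)=RC$ and handle the extended centroid exactly as in the proof of Theorem \ref{thm14}, Case 2. Namely, using $d(T(I))=0$ and $\beta I\subseteq R$ for $\beta=\beta^*\in C$, deduce $d(\beta)(x+x^*)=0$ on $I$, hence $d(\beta)T(I)=0$, hence $d(\beta)=0$ by Lemma \ref{lem4}; then Lemma \ref{lem26} gives $d(C)=0$, so by Skolem–Noether $d$ is inner on $RC$, say $d=[b,\cdot]$ with $b\in RC$. If $b\in C$ then $d=0$, contradiction; so $b\in RC\setminus C$ and $[b,T(I)]=0$. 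Since $T(I)$ generates (over $C$) the same object as $T(R)$ in the finite-dimensional setting — using $\beta I\subseteq R$ to see $[b,T(R)]=0$, or passing through Lemma \ref{lem4}-type density — we get $[b,T(R)]=0$ with $b\in RC\setminus C$, which is condition (iii) of Theorem \ref{thm2}. That theorem (in the case $T(R)\nsubseteq Z(R)$) yields $T(R)^2\subseteq Z(R)$; the case $T(R)\subseteq Z(R)$ is trivial. The skew-trace case is identical, replacing $T$ by $K_0$, Theorem \ref{thm2} by Theorem \ref{thm4}, and Lemma \ref{lem3} bookkeeping accordingly; here one must also dispose of $\mathrm{char}\,R=2$ separately, where $T(I)=K_0(I)$ so the trace case already applies.

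The main obstacle I anticipate is the passage from a polynomial identity (or a centralizing condition) holding on $T(I)$ to the same holding on $T(R)$ — i.e., making precise that an ideal $I$ of a prime ring "sees" enough of $T(R)$. The trace-transfer across ideals is exactly what Lemma \ref{lem4} and Proposition \ref{pro1} are designed for: $T(I)$ is never zero (Lemma \ref{lem4}), and a $*$-generalized polynomial vanishing on a nonzero ideal vanishes on all of $Q_s(R)$ (Proposition \ref{pro1}). So the real work is just bookkeeping to phrase "$d(T(I))=0$" as the vanishing of a $*$-generalized polynomial on the ideal $I$ — which it literally is, $d$ being inner after the Skolem–Noether step — and then quoting Proposition \ref{pro1} and Theorem \ref{thm2}/\ref{thm4}. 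A secondary subtlety is the use of \cite[Theorem 1.6]{lee2025} to guarantee that the "bad" case forces $[T(I)^2,T(I)^2]=0$ (hence PI); this is the same input already used in Theorem \ref{thm14} and should apply verbatim to the Lie ideal $T(I)^2$ of the prime ring $I$.
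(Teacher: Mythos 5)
Your argument is essentially correct, but it takes a genuinely different and considerably heavier route than the paper's. The paper's proof is a short direct computation: after replacing $I$ by the $*$-ideal $I\cap I^*$, for $x\in I$ and $w\in T(I)$ one has $xw\in I$, $d(w)=0$ and $d(x^*)=-d(x)$, so that $0=d\big(xw+(xw)^*\big)=d(x)w-wd(x)$; hence $[d(I),T(I)]=0$, Proposition \ref{pro1} upgrades this to $[d(x),T(R)]=0$ for every $x\in I$, and Theorem \ref{thm2} forces either $T(R)^2\subseteq Z(R)$ or $d(I)\subseteq Z(R)$, the latter being excluded by \cite[Theorem 2]{lee1986}. (In the skew case the same computation gives $d(x)w+wd(x)=0$, whence $[d(I),K_0(I)^2]=0$ and one quotes \cite[Theorem 4]{bergen1981} instead.) You instead first force $[T(I)^2,T(I)^2]=0$ via the dichotomy of \cite[Theorem 1.6]{lee2025} --- correctly ruling out the other branch, since a nonzero $R$-ideal annihilated by $d$ kills $d$ --- then invoke Amitsur's theorem \cite{amitsur1969} to make $I$, hence $R$, a PI-ring, prove $d(C)=0$, and use Skolem--Noether and Proposition \ref{pro1} to land in condition (iii) of Theorem \ref{thm2} (resp.\ Theorem \ref{thm4}). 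Both routes succeed; the paper's computation with $xw$ buys a much shorter proof that never needs the PI machinery, finite-dimensionality, or inner-ness of $d$. Two small repairs your write-up needs: you must pass to $I\cap I^*$ at the outset so that $*$ restricts to $I$ and Lemma \ref{lem8} and the Lie-ideal dichotomy apply to the ring $I$; and in the step $d(\beta)T(\cdot)=0$ you need a nonzero $*$-ideal $J$ with $\beta J\subseteq I$ (for instance one contained in $J_0I$ where $\beta J_0\subseteq R$), not merely $\beta J\subseteq R$, before Lemma \ref{lem4} can be applied. Both are routine.
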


\begin{proof}
Replacing $I$ by $I\cap I^*$, we may assume that $I$ is a nonzero $*$-ideal of $R$.

Case 1: Assume that $d(T(I))=0$.
Let $x\in I$ and $w\in T(I)$. Then
$$
xw\in I, w=w^*,d(x^*)=-d(x), d(w)=0\ \text{\rm and}\ d(xw+(xw)^*)=0.
$$
We have
\begin{eqnarray*}
  0 &=&d(xw+(xw)^*)\nonumber\\
     &=&d(x)w+xd(w)+w^*d(x^*)+d(w^*)x^*\nonumber\\
     &=&d(x)w-wd(x).\nonumber
\end{eqnarray*}
 That is, $[d(I), y+y^*]=0$ for all $y\in I$. Applying  Proposition \ref{pro1}, we get $[d(I), y+y^*]=0$ for all $y\in R$. In view of Theorem \ref{thm2}, either $d(I)\subseteq Z(R)$ or $T(R)^2\subseteq Z(R)$.
 If $d(I)\subseteq Z(R)$, then, by \cite[Theorem 2]{lee1986}, either $d=0$ or $R$ is commutative, a contradiction. Thus $T(R)^2\subseteq Z(R)$.

Case 2: Assume that $d(K_0(I))=0$. By Case 1, we may assume that $\text{\rm char}\,R\ne 2$. Let $x\in I$ and $w\in K_0(I)$. Then
$$
xw\in I, w^*=-w, d(x^*)=d(x), d(w)=0\ \text{\rm and}\ d(xw-(xw)^*)=0.
$$
We have
\begin{eqnarray*}
  0 &=&d(xw-(xw)^*)\nonumber\\
     &=&d(x)w+xd(w)-w^*d(x^*)-d(w^*)x^*\nonumber\\
     &=&d(x)w+wd(x).\nonumber
\end{eqnarray*}
 That is, $[d(I), w_1w_2]=0$ for all $w_1, w_2\in K_0(I)$. It follows from \cite[Theorem 4]{bergen1981} that $w_1w_2\in Z(R)$ for all $w_1, w_2\in K_0(I)$.
 That is, $K_0(I)^2\subseteq Z(R)$.
  Applying  Proposition \ref{pro1}, we get $K_0(R)^2\subseteq Z(R)$, as desired.
\end{proof}

Herstein proved that, in a prime ring $R$ containing a nontrivial idempotent, every noncentral subring invariant under all special inner automorphisms of $R$ contains a nonzero ideal (see \cite[Theorem, p.26]{herstein1983}).
Hence the following is an immediate consequence of Theorem \ref{thm17} (cf. Corollary \ref{cor7}).

\begin{thm} \label{thm19}
Let $R$ be a prime ring with involution $*$, possessing a nontrivial idempotent. 
Suppose that $M$ is a noncentral subring of $R$, which is invariant under all special inner automorphisms of $R$.
If $d$ is a nonzero derivation of $R$ such that $d(T(M))=0$ (resp. $d(K_0(M))=0$), then $T(R)^2\subseteq Z(R)$ (resp. $K_0(R)^2\subseteq Z(R)$) except when $R\cong \text{\rm M}_2(\text{\rm GF}(2))$.
\end{thm}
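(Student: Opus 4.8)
The plan is to reduce the statement to the ideal case already settled in Theorem \ref{thm17}, using Herstein's structure theorem for subrings invariant under special inner automorphisms as the bridge.

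First I would record the harmless observation that a prime ring $R$ possessing a nontrivial idempotent is automatically noncommutative (a commutative prime ring is a domain, whose only idempotents are $0$ and $1$), so the noncommutativity hypothesis of Theorem \ref{thm17} is met. Next, since $M$ is a noncentral subring of $R$ invariant under all special inner automorphisms and $R$ has a nontrivial idempotent, Herstein's theorem (\cite[Theorem, p.26]{herstein1983}) applies: unless $R\cong \text{\rm M}_2(\text{\rm GF}(2))$, the subring $M$ contains a nonzero two-sided ideal $I$ of $R$. This is precisely where the exceptional case in the statement comes from; it is inherited verbatim from Herstein's theorem. Now for every $x\in I\subseteq M$ we have $x+x^*\in T(M)$ and $x-x^*\in K_0(M)$, hence $T(I)\subseteq T(M)$ and $K_0(I)\subseteq K_0(M)$. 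Therefore $d(T(M))=0$ forces $d(T(I))=0$, and likewise $d(K_0(M))=0$ forces $d(K_0(I))=0$. Applying Theorem \ref{thm17} to the noncommutative prime ring $R$, the nonzero derivation $d$, and the nonzero ideal $I$, we conclude $T(R)^2\subseteq Z(R)$ in the trace case and $K_0(R)^2\subseteq Z(R)$ in the skew trace case.

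The proof is short because the substantive work has already been carried out in Theorem \ref{thm17} (which itself rests on Proposition \ref{pro1}, Theorems \ref{thm2} and \ref{thm4}, and results of Bergen–Herstein) and in Herstein's invariant-subring theorem; there is no genuine obstacle here. The only points needing a line of care are verifying that the hypotheses of both external inputs are in force ($M$ noncentral, $R$ noncommutative prime with a nontrivial idempotent) and the elementary containments $T(I)\subseteq T(M)$, $K_0(I)\subseteq K_0(M)$ coming from $I\subseteq M$. I do not expect any hidden difficulty, and in particular no separate treatment of the characteristic $2$ case is required, since that is already absorbed into Theorem \ref{thm17}.
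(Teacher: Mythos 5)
Your proof is correct and follows exactly the route the paper takes: the authors present Theorem \ref{thm19} as an immediate consequence of Herstein's invariant-subring theorem (which supplies a nonzero ideal $I\subseteq M$ outside the $\text{\rm M}_2(\text{\rm GF}(2))$ case) combined with Theorem \ref{thm17}. Your additional observations --- that a nontrivial idempotent forces noncommutativity and that $T(I)\subseteq T(M)$, $K_0(I)\subseteq K_0(M)$ --- are exactly the routine checks left implicit in the paper.
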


The following generalizes the non division case of Theorems \ref{thm33} and Theorems \ref{thm34}.

\begin{cor}\label{cor11}
Let $R$ be a prime ring with involution $*$, possessing a
nontrivial idempotent, and $R\ncong \text{\rm M}_2(\text{\rm GF}(2))$. Suppose that $M$ is a noncentral subring of $R$, which is invariant under all special inner automorphisms of $R$.

(i)\ If
$
[T(M), T(M)]=0,
$
then $\dim_CRC=4$ and $*$ is of the first kind.

(ii)\ If
$
[K_0(M), K_0(M)]=0
$
and $\text{\rm char}\,R\ne 2$,
then $\dim_CRC=4$, $*$ is of the transpose type, and $*$ is of the first kind.
\end{cor}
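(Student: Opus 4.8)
The plan is to deduce both parts from the prime-ring classifications already established in Theorems~\ref{thm2} and~\ref{thm4}, after replacing the subring $M$ by a genuine ideal. Since $M$ is noncentral, $R$ is noncommutative. Because $R\ncong\text{\rm M}_2(\text{\rm GF}(2))$, Herstein's theorem (\cite[Theorem, p.26]{herstein1983}, quoted just before Theorem~\ref{thm19}) provides a nonzero ideal $I$ of $R$ with $I\subseteq M$. From $I\subseteq M$ we get $T(I)\subseteq T(M)$ and $K_0(I)\subseteq K_0(M)$, so the commuting hypotheses descend to $I$: $[T(I),T(I)]=0$ under the assumption of (i), and $[K_0(I),K_0(I)]=0$ under the assumption of (ii).

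Next I would globalize from $I$ back to $R$ using Proposition~\ref{pro1}. In case (i), the $*$-polynomial $[X+X^*,Y+Y^*]$ vanishes on the nonzero ideal $I$, hence on $Q_s(R)$, so in particular $[T(R),T(R)]=0$. In case (ii), the $*$-polynomial $[X-X^*,Y-Y^*]$ vanishes on $I$, hence $[K_0(R),K_0(R)]=0$. (For the subcase $T(M)\nsubseteq Z(R)$ of part (i) one could instead pick $b\in T(M)\setminus Z(R)$ and feed the inner derivation it induces into Theorem~\ref{thm19}, but the route via Proposition~\ref{pro1} treats all subcases at once.)

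It then remains to read off the conclusions. For (i): if $T(R)\nsubseteq Z(R)$, then $[T(R),T(R)]=0$ forces condition (iv) of Theorem~\ref{thm2}, so $R$ is exceptional (hence $\dim_CRC=4$ by Definition~\ref{def2}) and $*$ is of the first kind; if instead $T(R)\subseteq Z(R)$, then $T(R)^2\subseteq Z(R)$, so $\dim_CRC=4$ by Lemma~\ref{lem1}(ii), and $*$ is of the first kind by the observation following Definition~\ref{def4}. Either way (i) holds. For (ii): since $\text{\rm char}\,R\ne2$, Corollary~\ref{cor8} excludes the possibility $K_0(R)\subseteq Z(R)$, so $K_0(R)\nsubseteq Z(R)$; then $[K_0(R),K_0(R)]=0$ together with Theorem~\ref{thm4} yields condition (v) there, namely $\dim_CRC=4$, $*$ of the transpose type, and $*$ of the first kind.

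As all the ingredients are already in place, I expect this to be essentially a matter of bookkeeping rather than of overcoming a genuine obstacle. The two points that need attention are: (a) the hypothesis $R\ncong\text{\rm M}_2(\text{\rm GF}(2))$ is used precisely to ensure that Herstein's theorem supplies the ideal $I$; and (b) Theorem~\ref{thm2} is stated only under $T(R)\nsubseteq Z(R)$, so the degenerate case $T(R)\subseteq Z(R)$ in part (i) must be handled separately, which is straightforward via Lemma~\ref{lem1}(ii) and the observation following Definition~\ref{def4}.
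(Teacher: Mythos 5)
Your proposal is correct and follows essentially the same route as the paper: extract a nonzero ideal $I\subseteq M$ via Herstein's invariant-subring theorem, globalize the commuting condition from $I$ to $R$ by Proposition \ref{pro1}, and then read off the conclusions from Theorems \ref{thm2} and \ref{thm4} (with Corollary \ref{cor8} ruling out $K_0(R)\subseteq Z(R)$ in part (ii)). The only cosmetic difference is that in the degenerate case $T(R)\subseteq Z(R)$ of part (i) the paper invokes Lemma \ref{lem7} (symplectic type, hence first kind) while you use the direct observation following Definition \ref{def4}; both are valid.
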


\begin{proof}
In view of \cite[Theorem, p.26]{herstein1983}, $M$ contains a nonzero ideal $I$ of $R$. Replacing $I$ by $I\cap I^*$, we may assume that $I$
is a nonzero $*$-ideal of $R$.

(i)\ Since
$
[T(M), T(M)]=0,
$
it follows that $[T(I), T(I)]=0$. By Proposition \ref{pro1}, we get $[T(R), T(R)]=0$. This implies that $\dim_CRC=4$
(see Lemma \ref{lem1} (ii) and Theorem \ref{thm2} (iv)).
  If $T(R)\nsubseteq Z(R)$, then it follows from Theorem \ref{thm2} (iv) that $*$ is of the first kind.
Suppose that $T(R)\subseteq Z(R)$. By Lemma \ref{lem7}, $*$ is of the symplectic type and hence is of the first kind.

(ii)\ Since
$
[K_0(M), K_0(M)]=0,
$
it follows that $[K_0(I), K_0(I)]=0$. By Proposition \ref{pro1}, we get $[K_0(R), K_0(R)]=0$.
If $K_0(R)\subseteq Z(R)$, then, by Corollary \ref{cor8}, $T(R)\subseteq Z(R)$ and $\text{\rm char}\,R=2$, a contradiction.
Thus $K_0(R)\nsubseteq Z(R)$. It follows from Theorem \ref{thm4} (v) that $\dim_CRC=4$, $*$ is of the transpose type and $*$ is of the first kind.
\end{proof}

We now turn to the case of Lie ideals. The following will play a key role in the proofs below.

\begin{thm} (\cite[Theorem 5.2 Case 1]{lee2025})\label{thm23}
Let $R$ be a prime ring, $L$ a non-abelian Lie ideal of $R$, and $a\in R\setminus Z(R)$.
Then $L+aL$ contains a nonzero ideal of $R$.
\end{thm}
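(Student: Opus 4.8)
The plan is to reduce the statement to the case of a nonzero ideal and then write down an \emph{explicit} nonzero ideal sitting inside $L+aL$. Note first that the hypothesis $a\in R\setminus Z(R)$ already forces $R$ to be noncommutative. Since $L$ is non-abelian, $[L,L]\ne 0$, so $I:=R[L,L]R$ is a nonzero two-sided ideal of $R$ (a nonzero element of a prime ring cannot be annihilated by $R$ on both sides), and Lemma~\ref{lem5}(ii) gives $[I,R]\subseteq L$. Consequently $[I,R]+a[I,R]\subseteq L+aL$, and it is enough to produce a nonzero ideal of $R$ inside $[I,R]+a[I,R]$.

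The engine of the argument is the commutator identity $[ai,r]=a[i,r]+[a,r]i$, valid for all $i\in I$ and $r\in R$. Because $I$ is an ideal we have $ai\in I$, hence $[ai,r]\in[I,R]$, and rearranging gives $[a,r]i=[ai,r]-a[i,r]\in[I,R]+a[I,R]$; thus $[a,R]I\subseteq[I,R]+a[I,R]$. I would then verify that $[a,R]I$ is itself a nonzero two-sided ideal of $R$. It is plainly a right ideal; it is a left ideal since $s[a,r]=[a,sr]-[a,s]r$ shows $R[a,R]\subseteq[a,R]+[a,R]R$, so $R\bigl([a,R]I\bigr)\subseteq[a,R]I+[a,R](RI)\subseteq[a,R]I$; and it is nonzero because $a\notin Z(R)$ gives some $r$ with $[a,r]\ne 0$, and $[a,r]I=0$ would force $[a,r]RI\subseteq[a,r]I=0$, hence $[a,r]=0$ by primeness of $R$ and $I\ne 0$, a contradiction. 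Therefore $[a,R]I$ is a nonzero ideal of $R$ with $[a,R]I\subseteq[I,R]+a[I,R]\subseteq L+aL$, as required.

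The only place where genuine care is needed is recognizing that the second summand $a[I,R]$ is indispensable: a non-abelian Lie ideal (even $[I,R]$ itself) need not contain any nonzero ideal, so one cannot argue with $L$ in isolation. What makes the proof go through is that the non-centrality of $a$ forces $[a,R]I$ to be a full two-sided ideal, while the identity $[ai,r]=a[i,r]+[a,r]i$ places exactly that ideal inside $[I,R]+a[I,R]$. Everything else — the reduction via Lemma~\ref{lem5}(ii) and the verification of the ideal axioms in a prime ring — is routine.
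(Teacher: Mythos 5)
Your argument is correct, and it is worth noting that the paper itself gives no proof of this statement: Theorem~\ref{thm23} is imported verbatim from \cite[Theorem 5.2 Case 1]{lee2025}, so you have supplied a self-contained elementary proof of a cited result rather than a variant of an in-paper argument. Every step checks out. The reduction to $I:=R[L,L]R$ is legitimate ($[L,L]\ne 0$ and a nonzero element of a prime ring cannot satisfy $RuR=0$, since $xR=0$ forces $xRx=0$ and hence $x=0$), and Lemma~\ref{lem5}(ii) does give $[I,R]\subseteq L$. The identity $[ai,r]=a[i,r]+[a,r]i$ together with $aI\subseteq I$ correctly places $[a,R]I$ inside $[I,R]+a[I,R]\subseteq L+aL$, and your verification that $[a,R]I$ is a two-sided ideal (right absorption via $is\in I$, left absorption via $s[a,r]=[a,sr]-[a,s]r$ and $ri\in I$) and that it is nonzero (from $[a,r]Ri_0=0$ with $i_0\ne 0$ and primeness) is sound; none of it uses a unity, consistent with the paper's conventions. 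The one point you flag yourself --- that $L$ alone need not contain a nonzero ideal, so the summand $a[I,R]$ is what saves the argument --- is indeed the crux, and your proof isolates it cleanly. This is shorter and more transparent than having to unwind the machinery of \cite{lee2025}, at the cost of proving only the special case actually stated here.
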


\begin{lem} \label{lem24}
Let $R$ be a prime ring with involution $*$, $\text{\rm char}\,R=2$, and $b\in R\setminus Z(R)$. Suppose that
$
\big[b, [x, y]+[x, y]^*\big]=0
$
for all $x, y\in R$.
Then $T(R)^2\subseteq Z(R)$.
\end{lem}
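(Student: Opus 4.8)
The plan is to force $*$ to be of the first kind and then, via the theory of $*$-generalized polynomial identities, to push the hypothesis into a $4$-dimensional situation where Theorem \ref{thm2} and Lemma \ref{lem6} finish the job. First the quick reductions: since $b\notin Z(R)$ the ring $R$ is noncommutative, and since $Z(R)=R\cap C$ we get $b\notin C$; also, as $\text{\rm char}\,R=2$, we have $T(A)=K_{0}(A)$ for every additive subgroup $A$ and $[z,w]^{*}=[z^{*},w^{*}]$. The elements $[x,y]+[x,y]^{*}$ lie in $T([R,R])$ and additively generate it, so the hypothesis reads exactly $[b,T([R,R])]=0$. If $T([R,R])=0$, then replacing $y$ by $y^{*}$ in $[x,y]+[x,y]^{*}=0$ and adding the resulting identities yields $[x+x^{*},y+y^{*}]=0$, i.e. $[T(R),T(R)]=0$, and the conclusion follows from Theorem \ref{thm2}; so we may assume $T([R,R])\ne 0$, and likewise $T(R)\nsubseteq Z(R)$, since otherwise $T(R)^{2}\subseteq Z(R)$ is immediate.

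Next I would show $*$ is of the first kind. Suppose not, say $\beta\ne\beta^{*}$ for some $\beta\in C$, and pick a nonzero $*$-ideal $I$ of $R$ with $\beta I\subseteq R$. For $x,y\in I$, applying the hypothesis to the pairs $(x,y)$ and $(\beta x,y)$ and subtracting $\beta$ times the former from the latter gives $(\beta^{*}-\beta)[b,[x,y]^{*}]=0$, hence $[b,[x,y]^{*}]=0$ and then $[b,[x,y]]=0$. Thus the generalized polynomial identity $[b,[X,Y]]$ vanishes on the nonzero ideal $I$, so by Proposition \ref{pro1} it vanishes on $Q_{s}(R)$, whence $b\in C$ by Lemma \ref{lem5}(iii), a contradiction. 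Hence $*$ is of the first kind.

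Now $[b,[X,Y]+[X,Y]^{*}]=0$ is a nontrivial $*$-generalized polynomial identity of $R$ (nontrivial because $b\notin C$ and, by the first paragraph, $[X,Y]+[X,Y]^{*}$ is not an identity of $R$). By Proposition \ref{pro1} together with the structure theory of prime GPI-rings, $RC=Q_{s}(R)$ is a primitive ring with nonzero socle and is finite dimensional over $C$; being a prime PI-ring it is then a central simple $C$-algebra of dimension $k^{2}$ with $k\ge 2$. Extending the identity to $RC$ gives $[b,T([RC,RC])]=0$. I would then rule out $k\ge 3$: in that range $[RC,RC]+S(RC)=RC$, so in characteristic $2$ one has $T([RC,RC])=T(RC)\supseteq T(R)$, whence $[b,T(R)]=0$ with $b\in RC\setminus C$ and $T(R)\nsubseteq C$; Theorem \ref{thm2} then forces $\dim_{C}RC=4$, a contradiction. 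Therefore $\dim_{C}RC=4$. With $\dim_{C}RC=4$, $*$ of the first kind, and $T(R)\nsubseteq Z(R)$, a symplectic involution in characteristic $2$ would give $T(R)\subseteq Z(R)$ by Lemma \ref{lem7}(iii) (and Lemma \ref{lem6}, Case 2); so $*$ is of the transpose type, and Case 1(i) of Lemma \ref{lem6} yields $T(R)^{2}\subseteq Z(R)$.

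The step I expect to be the real obstacle is the passage to $\dim_{C}RC=4$: one must confirm carefully that the displayed relation is a nontrivial $*$-GPI (so that the socle of $RC$ is nonzero and $RC$ is PI) and then, for $k\ge 3$, control the gap between $T([RC,RC])$ and $T(RC)$ — equivalently establish $[RC,RC]+S(RC)=RC$ — and handle the residual small configurations in characteristic $2$. An alternative to this last point is to enlarge the centralized symmetric set directly: substituting $y\mapsto yx$ in $[b,[x,y]+[x,y]^{*}]=0$ and simplifying gives $[b,\,[x,y]x+([x,y]x)^{*}]=0$, and iterating such substitutions one may hope to reach $[b,T(J)]=0$ for a nonzero ideal $J$ of $R$ (or else to land in the case where $T([R,R])^{2}$ is central, via Lemma \ref{lem8} and Lemma \ref{lem9}), after which Theorem \ref{thm17} applies.
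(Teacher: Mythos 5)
Your reductions (noncommutativity, the degenerate case $T([R,R])=0$, and the proof that $*$ is of the first kind) are correct, but the proof has a genuine gap exactly at the step you flag. From the fact that $[b,[X,Y]+[X,Y]^*]$ is a nontrivial $*$-generalized polynomial identity you conclude that $RC=Q_s(R)$ is finite-dimensional over $C$. That implication is false: the GPI structure theorems (Martindale, and Chuang's $*$-version) only yield that $RC$ is a primitive ring with nonzero socle such that $eRCe$ is finite-dimensional over $C$ for a minimal idempotent $e$; the ring $RC$ itself may be infinite-dimensional (finite-rank operators plus scalars on an infinite-dimensional space), GPI does not imply PI, and $Q_s(R)=RC$ also fails outside the PI case. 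Everything downstream --- the central simple algebra of dimension $k^2$, the elimination of $k\ge 3$, the landing at $\dim_CRC=4$ --- rests on this unjustified claim. To close the gap you would either have to dispose of the infinite-dimensional case separately (say, by a Litoff-type argument inside the socle showing that $ebe$ centralizes $T([eRCe,eRCe])$ for idempotents $e$ of arbitrary finite rank, forcing $b\in C$), or avoid GPI structure theory altogether. Even granting finite-dimensionality, the identity $[RC,RC]+S(RC)=RC$ for $k\ge 3$ is asserted rather than proved.

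For comparison, the paper's proof never invokes GPI structure for this lemma. It first applies the hypothesis to $b+b^*$ to reduce to the cases $b^*=b$ and $b^*=b+\beta$ with $\beta\in Z(R)$; substituting $x\mapsto b$ gives $[b^2,T(R)]=0$, so by Theorem \ref{thm2} either one is done or $b^2\in Z(R)$; in the latter situation it uses Theorem \ref{thm23} to find a nonzero ideal $I\subseteq [R,R]+b[R,R]$ and computes directly that $[b,T(I)]=0$, after which Theorem \ref{thm17} finishes; the residual case $\beta\ne 0$ is eliminated by the substitutions $x\mapsto bx$ and $y\mapsto by$, ending in $[b,y][b,x]=0$. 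Your closing alternative --- iterating substitutions to reach $[b,T(J)]=0$ for a nonzero ideal $J$ --- is precisely the strategy that works, but in your write-up it is left as a hope rather than carried out, so the proposal as it stands does not constitute a proof.
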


\begin{proof}
By hypothesis, we have
\begin{eqnarray}
\big[b, [x, y]+[x, y]^*\big]=0 \label{eq:5}
\end{eqnarray}
for all $x, y\in R$.

Suppose first that $b^*=b$. Replacing $x$ by $b$ in Eq.\eqref{eq:5}, we get
$
\big[b, [b, y]+[b, y^*]\big]=0
$
and hence
$[b^2, y+ y^*]=0$ for all $y\in R$. That is, $[b^2, T(R)]=0$. In view of Theorem \ref{thm2}, either $T(R)^2\subseteq Z(R)$ or $b^2\in Z(R)$.
Assume that $b^2\in Z(R)$. Thus $[b, z]b=b[b, z]$ for all $z\in R$.

Note that $\big[[R, R], [R, R]\big]\ne 0$.
In view of Theorem \ref{thm23}, $I\subseteq [R, R]+b[R, R]$ for some nonzero ideal $I$ of $R$.
Let $x\in I$. Write
$$
x=\sum_i[a_i, b_i]+\sum_jb[c_j, d_j]
$$
for some finitely many $a_i, b_i, c_j, d_j\in R$.
Then
\begin{eqnarray*}
[b, x+x^*]&=&\Big[b, \sum_i([a_i, b_i]+[a_i, b_i]^*)\Big]+\Big[b, \sum_jb[c_j, d_j]+\sum_j[c_j, d_j]^*b\Big]\\
                  &=&b\Big[b, \sum_j[c_j, d_j]\Big]+\Big[b, \sum_j[c_j, d_j]^*\Big]b\\
                  &=&b\Big[b, \sum_j[c_j, d_j]\Big]+b\Big[b, \sum_j[c_j, d_j]^*\Big]\\
                  &=&b\sum_j\Big[b,[c_j, d_j]+[c_j, d_j]^*\Big]\\
                  &=&0.
\end{eqnarray*}
That is, $[b, T(I)]=0$ and so, by Theorem \ref{thm17}, we get $T(R)^2\subseteq Z(R)$ in the case of $b^*=b$.

We consider the general case.
By Eq.\eqref{eq:5}, $\big[b+b^*, [x, y]+[x, y]^*\big]=0$ for all $x, y\in R$. By the case above, either $T(R)^2\subseteq Z(R)$ or $b+b^*\in Z(R)$.  Assume that $b^*=b+\beta$. Then $b^*=b+\beta$ for some $\beta\in Z(R)$.
It suffices to prove that $\beta=0$. Suppose on the contrary that $\beta\ne 0$.

Replacing $x$ by $b$ in Eq.\eqref{eq:5}, we get $\big[b, [b, y]+[b^*, y^*]\big]=0$ and hence
$\big[b, [b, y]+[b+\beta, y^*]\big]=0$, implying
$[b^2, y+ y^*]=0$ for all $y\in R$. That is, $[b^2, T(R)]=0$. In view of Theorem \ref{thm2}, we have $b^2\in Z(R)$.
Replacing $x$ by $bx$  in Eq.\eqref{eq:5}, we get
\begin{eqnarray}
\big[b, [bx, y]+[bx, y]^*\big]=0. \label{eq:6}
\end{eqnarray}
Expanding Eq.\eqref{eq:6}, we have
\begin{eqnarray*}
0&=&\big[b, [b, y]x+b[x, y]+[x^*(b+\beta), y^*]\big]\\
  &=&[b, y][b, x]+b\big[b, [x, y]\big]+\big[b, x^*[b, y^*]\big]+\big[b, [x^*, y^*]\big](b+\beta)\\
  &=&[b, y][b, x]+b\big[b, [x, y]\big]+[b, x^*][b, y^*]+(b+\beta)\big[b, [x^*, y^*]\big]\\
   &=&[b, y][b, x]+[b, x^*][b, y^*]+\beta\big[b, [x^*, y^*]\big]+b\Big(\big[b, [x, y]+[x^*, y^*]\big]\Big)\\
   &=&[b, y][b, x]+[b, x^*][b, y^*]+\beta\big[b, [x^*, y^*]\big].\\
\end{eqnarray*}
That is,
\begin{eqnarray}
[b, y][b, x]+[b, x^*][b, y^*]+\beta\big[b, [x^*, y^*]\big]=0 \label{eq:7}
\end{eqnarray}
for all $x, y\in R$. Replacing $y$ by $by$ in Eq.\eqref{eq:7}, we get
\begin{eqnarray}
[b, by][b, x]+[b, x^*][b, y^*(b+\beta)]+\beta\big[b, [x^*, y^*(b+\beta)]\big]=0.
 \label{eq:8}
\end{eqnarray}
Note that $[b, x^*][b, y^*(b+\beta)]=(b+\beta)[b, x^*][b, y^*]$ and
$$
\big[b, [x^*, y^*(b+\beta)]\big]=(b+\beta)\big[b, [x^*, y^*]\big]+[b, y^*][x^*, b].
$$
Thus it follows from Eq.\eqref{eq:8} that
\begin{eqnarray*}
b[b, y][b, x]+(b+\beta)[b, x^*][b, y^*]+\beta(b+\beta)\big[b, [x^*, y^*]\big]+\beta[b, y^*][x^*, b]=0
\end{eqnarray*}
and so
\begin{eqnarray*}
&&b\Big([b, y][b, x]+[b, x^*][b, y^*]
+\beta\big[b, [x^*, y^*]\big]\Big)\\
&&\ \ \ \ \ \ \ \ \ \ \ \ \ \ +\beta\Big([b, x^*][b, y^*]+[b, y^*][x^*, b]\Big)+\beta^2\big[b, [x^*, y^*]\big]=0.
\end{eqnarray*}
By Eq.\eqref{eq:7}, we get
$[b, x^*][b, y^*]+[b, y^*][x^*, b]+\beta\big[b, [x^*, y^*]\big]=0$ and hence
\begin{eqnarray}
[b, x][b, y]+[b, y][b, x]+\beta\big[b, [x, y]\big]=0
 \label{eq:9}
\end{eqnarray}
for all $x, y\in R$. Replacing $x$ by $bx$ in Eq.\eqref{eq:9}, we get
$$
b\Big([b, x][b, y]+[b, y][b, x]+\beta\big[b, [x, y]\big]\Big)+\beta [b, y][b,x]=0
$$
and so $[b, y][b,x]=0$ for all $x, y\in R$. The primeness of $R$ implies that $b\in Z(R)$, a contradiction.
\end{proof}

\begin{lem}\label{lem14}
Let $R$ be a noncommutative prime ring with involution $*$, $\text{\rm char}\,R=2$, and $d$ a nonzero derivation of $R$.
If $d(T(R))\subseteq Z(R)$, then $*$ is of the first kind.
\end{lem}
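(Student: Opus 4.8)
The plan is to split the argument according to how $d$ behaves on the symmetric part of $C$, writing $C_0:=\{\beta\in C\mid\beta^*=\beta\}$. A preliminary observation — derivations preserve the centre of $Q_s(R)$, so $d(C)\subseteq C$ — makes the manipulations below legitimate. First I would fix $\beta\in C_0$ and a nonzero $*$-ideal $I$ with $\beta I\subseteq R$. For every $x\in I$ we have $\beta(x+x^*)=\beta x+(\beta x)^*\in T(R)$, so
\[
d(\beta)(x+x^*)+\beta\,d(x+x^*)=d\bigl(\beta(x+x^*)\bigr)\in Z(R);
\]
since $\beta\,d(x+x^*)\in C$, this forces $d(\beta)(x+x^*)\in C$ for all $x\in I$.

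If $d(\beta)\neq0$ for some $\beta\in C_0$, then dividing by $d(\beta)\in C^{\times}$ gives $x+x^*\in C$ for all $x\in I$, i.e. $T(I)\subseteq C\cap R=Z(R)$. Because $R$ is noncommutative, $T(I)\neq0$ (Lemma \ref{lem4}), so I may pick $0\neq z_0=y_0+y_0^*\in T(I)\subseteq Z(R)$; then for each $r\in R$, using $ry_0,\,ry_0^*\in I$,
\[
z_0(r+r^*)=\bigl(ry_0+(ry_0)^*\bigr)+\bigl(ry_0^*+(ry_0^*)^*\bigr)\in T(I)\subseteq Z(R),
\]
whence $r+r^*=z_0^{-1}\cdot z_0(r+r^*)\in Z(R)$. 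Thus $T(R)\subseteq Z(R)$, and by the remark following Definition \ref{def4} the involution $*$ is of the first kind, finishing this case. So I may now assume $d(\beta)=0$ for every $\beta\in C_0$, and then Lemma \ref{lem26} gives $d(C)=0$.

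It remains to rule out $*$ being of the second kind under the assumption $d(C)=0$. Suppose $\beta\neq\beta^*$ for some $\beta\in C$ and fix a nonzero $*$-ideal $I$ with $\beta I\subseteq R$. For $x\in I$ we have $\beta x+\beta^*x^*=\beta x+(\beta x)^*\in T(R)$, and since $d(C)=0$,
\[
\beta\,d(x)+\beta^*d(x^*)=d(\beta x+\beta^*x^*)\in Z(R).
\]
Writing $z:=d(x+x^*)\in Z(R)$ and substituting $d(x^*)=z-d(x)$ turns this into $(\beta-\beta^*)d(x)+\beta^*z\in Z(R)$, so $(\beta-\beta^*)d(x)\in C$, and as $\beta\neq\beta^*$ we get $d(x)\in C\cap R=Z(R)$ for all $x\in I$. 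Hence $d(I)\subseteq Z(R)$ with $d\neq0$, which by \cite[Theorem 2]{lee1986} forces $R$ to be commutative — a contradiction. Therefore $*$ is of the first kind.

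The step I expect to be the crux is getting past the first case: from $d(\beta)(x+x^*)\in C$ for $x\in I$ one only sees $T(I)$, and one must upgrade $T(I)\subseteq Z(R)$ to $T(R)\subseteq Z(R)$ (a characteristic-free passage, carried out above via the central element $z_0$) before the remark after Definition \ref{def4} applies; once $d(C)=0$ has been secured, the rest is essentially a one-line computation. I note that this argument does not actually use that the characteristic is $2$ — that is merely the setting in which the lemma will be invoked.
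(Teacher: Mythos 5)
Your proof is correct. It reaches the same endgame as the paper's proof --- compute $d$ of the trace of $\beta x$ for $x$ in a suitable $*$-ideal $I$, isolate $(\beta-\beta^*)d(x)\in C$, conclude $d(I)\subseteq Z(R)$, and contradict \cite[Theorem 2]{lee1986} --- but the two arguments secure the needed control of $d$ on $C$ in different ways. The paper works with $\beta^2$ rather than $\beta$: since $\text{\rm char}\,R=2$, one has $d(\beta^2)=2\beta d(\beta)=0$ automatically, so $\beta^2$ is a central element annihilated by $d$ with $(\beta^2)^*=(\beta^*)^2\neq\beta^2$ (because $\beta^2+(\beta^*)^2=(\beta+\beta^*)^2\neq0$), and the final computation can be run at once; this is the one genuinely characteristic-two step, and it is the (unstated) reason behind the paper's bare assertion that $d(\beta^2)=0$. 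You instead first establish $d(C)=0$ via a dichotomy: either $d$ kills every symmetric central element, in which case Lemma~\ref{lem26} applies, or $d(\beta)\neq0$ for some symmetric $\beta\in C$, in which case $T(I)\subseteq Z(R)$, which you correctly upgrade to $T(R)\subseteq Z(R)$ (your central-element computation with $z_0$ is valid since $z_0$ commutes with everything; one could alternatively invoke Proposition~\ref{pro1} on the $*$-generalized polynomial $[X+X^*,Y]$), whence $*$ is of the first kind by the remark following Definition~\ref{def4} and the lemma holds outright in that branch. Your route is slightly longer but, as you observe, characteristic-free, and it makes explicit a point the paper leaves to the reader. Both proofs are valid.
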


\begin{proof}
Suppose that $*$ is of the second kind. Let $\beta\in C$ be such that $\beta\ne \beta^*$. Choose a nonzero $*$-ideal $I$ of $R$ such that
$\beta^2I\subseteq R$. Then $d(\beta^2)=0$ and $d((\beta^*)^2)=0$.
Let $x\in I$. We compute
\begin{eqnarray*}
&&d(\beta^2x+(\beta^2 x)^*)\\
&=& d(\beta^2 (x+x^*)+((\beta^2)^*+\beta^2)x^*)\\
&=& \beta^2 d(x+x^*)+d(\beta^2)(x+x^*)+d((\beta^*)^2+\beta^2)x^*+((\beta^*)^2+\beta^2)d(x^*)\\
&=& \beta^2 d(x+x^*)+((\beta^*)^2+\beta^2)d(x^*)\in Z(R)
\end{eqnarray*}
and so $((\beta^*)^2+\beta^2)d(x^*)\in Z(R)$. Since $(\beta^*)^2+\beta^2\ne 0$, we get $d(x^*)\in Z(R)$ for all $x\in I$.
As $I=I^*$, we have $d(I)\subseteq Z(R)$, implying that $R$ is commutative (see \cite[Theorem 2]{lee1986}), a contradiction. This completes the proof.
\end{proof}

\begin{lem}\label{lem25}
Let $R$ be a ring with involution $*$, and let $L$ be a Lie ideal of $R$.
Then $[K_0(L), K_0(R)] \subseteq K_0(L)$.
\end{lem}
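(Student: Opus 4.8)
The plan is to argue directly from the definitions, reducing everything to the behaviour of $*$ on additive commutators. Fix $x\in L$ and $y\in R$; since such commutators $[x-x^*,y-y^*]$ additively generate $[K_0(L),K_0(R)]$, it suffices to show $[x-x^*,y-y^*]\in K_0(L)$, i.e.\ that it has the form $z-z^*$ with $z\in L$. First I would expand by bilinearity of the bracket:
$$
[x-x^*,\,y-y^*]=[x,y]-[x,y^*]-[x^*,y]+[x^*,y^*].
$$

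The key point is that applying $*$ to a commutator reverses it with a sign change. Using that $*$ is an anti-automorphism with $(\,\cdot\,)^{**}=\mathrm{id}$, one checks $[x^*,y^*]=(yx)^*-(xy)^*=-[x,y]^*$ and likewise $[x^*,y]=-[x,y^*]^*$. Substituting these two identities into the four-term expansion makes it collapse to
$$
[x-x^*,\,y-y^*]=\bigl([x,y]-[x,y]^*\bigr)-\bigl([x,y^*]-[x,y^*]^*\bigr).
$$

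Now I would invoke the Lie ideal hypothesis: since $x\in L$ and $[L,R]\subseteq L$, both $[x,y]$ and $[x,y^*]$ lie in $L$, so each of $[x,y]-[x,y]^*$ and $[x,y^*]-[x,y^*]^*$ lies in $K_0(L)$. Because $L$ is an additive subgroup, $K_0(L)$ is one as well (indeed $(a-a^*)-(b-b^*)=(a-b)-(a-b)^*$ for $a,b\in L$), so the displayed difference lies in $K_0(L)$, which completes the proof.

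I do not expect a genuine obstacle here; the whole argument is a short manipulation. The only thing to be careful about is the sign bookkeeping when moving $*$ past a commutator, and — crucially — arranging the computation so that one only ever uses $x,[x,y],[x,y^*]\in L$ and never $x^*\in L$, since $L$ is not assumed to be $*$-invariant.
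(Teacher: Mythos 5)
Your proposal is correct and is essentially identical to the paper's argument: the paper states the same identity $[u-u^*,r-r^*]=([u,r]-[u,r]^*)-([u,r^*]-[u,r^*]^*)$ directly, and you simply derive it step by step before concluding via $[x,y],[x,y^*]\in L$. Your remark about never needing $x^*\in L$ is a sound observation that the paper leaves implicit.
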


\begin{proof}
Let $u \in L$ and $r \in R$. Then
  $$
  [u-u^{\ast}, r-r^{\ast}] = ([u,r]-[u,r]^{\ast})-([u,r^{\ast}]-[u,r^{\ast}]^{\ast}) \in K_0(L).
  $$
Thus $[K_0(L), K_0(R)] \subseteq K_0(L)$, as desired.
\end{proof}

The following is Theorem D.

\begin{thm}\label{thm20}
Let $R$ be a prime ring with involution $*$, $L$ a non-abelian Lie ideal of $R$, and $d$ a nonzero derivation of $R$.
If $d(T(L))=0$, then $T(R)^2\subseteq Z(R)$.
\end{thm}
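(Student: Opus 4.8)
The plan is to reduce Theorem D to the ideal case already settled in Theorem \ref{thm17}, feeding everything through the structure theorem \ref{thm2}, the $\text{\rm char}\,R=2$ Lemma \ref{lem24}, and Proposition \ref{pro1}.

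First I would normalise the situation. The conclusion is vacuous when $R$ is commutative, so assume it is not. Since $T(L^{*})=T(L)$ (as $y\mapsto y^{*}$ permutes $L$ to $L^{*}$) and $T(L)$ is an additive group, replacing $L$ by $L+L^{*}$ changes neither the hypothesis $d(T(L))=0$ nor the non-abelianness of $L$, so I may assume $L^{*}=L$; then $T(L)\subseteq S(R)$. Because $[L,L]\ne 0$, Lemma \ref{lem5}(ii) (applied to the nonzero ideal generated by $[L,L]$, or alternatively Theorem \ref{thm23}) provides a nonzero $*$-ideal $I$ of $R$ with $I\subseteq L+L^{2}$ and $[I,R]\subseteq L$. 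Hence $[x,r]\in L$ for all $x\in I$, $r\in R$, and $d(T(L))=0$ becomes the $*$-differential identity
\begin{equation*}
[d(x),r]+[x,d(r)]-[d(x^{*}),r^{*}]-[x^{*},d(r^{*})]=0\qquad(x\in I,\ r\in R);
\end{equation*}
symmetrising $r\leftrightarrow r^{*}$ also yields $d([x-x^{*},r+r^{*}])=0$ and $d([x+x^{*},r-r^{*}])=0$ on $I\times R$.

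If $d$ is $X$-inner, say $d=[b,\,\cdot\,]$ with $b\in Q_s(R)\setminus C$, the identity says $[b,[x,r]+[x,r]^{*}]=0$ on the nonzero ideal $I$, so Proposition \ref{pro1} extends it to all of $Q_s(R)$. When $\text{\rm char}\,R=2$ this is precisely the hypothesis of Lemma \ref{lem24}, giving $T(R)^{2}\subseteq Z(R)$ at once. When $\text{\rm char}\,R\ne 2$, restricting $r$ to $I$ gives $[b,T([I,I])]=0$; since $[I,I]$ is a non-abelian Lie ideal (Lemma \ref{lem5}(iv)), $T([I,I])^{2}$ is a Lie ideal of $R$ (Lemma \ref{lem8}) annihilated by $[b,\,\cdot\,]$, so Lemma \ref{lem2} forces $T([I,I])^{2}\subseteq Z(R)$; this last relation is a $*$-generalized polynomial identity on $I$, so Proposition \ref{pro1} upgrades it to $T([R,R])^{2}\subseteq C$, and a short argument using Theorem \ref{thm2} (and Lemma \ref{lem7} in the central case) then delivers $T(R)^{2}\subseteq Z(R)$.

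If $d$ is $X$-outer, I would feed the $*$-differential identity above into Kharchenko's theorem for $*$-differential identities, distinguishing the subcases $d^{*}=\lambda d$ and $d,d^{*}$ $C$-independent, where $d^{*}:=*\circ d\circ *$: linearising the derivation terms turns the identity into a generalized polynomial identity of $R$ whose only nondegenerate solutions force $RC$ to be primitive with nonzero socle and finite-dimensional over $C$, reducing (via Lemma \ref{lem7}) to the explicit $2\times2$ matrix computations of Lemmas \ref{lem3} and \ref{lem6}, where $T(R)^{2}\subseteq Z(R)$ is read off, while every other branch collapses to $d=0$, a contradiction. In the $\text{\rm char}\,R\ne 2$ strand one also exploits that $d$ then annihilates $L\cap S(R)$, which spans a Lie ideal contained in $L$, to re-enter the ideal case through Lemma \ref{lem2} and Theorem \ref{thm17}. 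The genuine obstacle is exactly this: there is no single identity carrying "$d$ annihilates $T(L)$'' straight to the goal, so the work lies in funnelling the $X$-outer and $\text{\rm char}\,R\ne 2$ situations either into an honest inner derivation vanishing on $T$ of a nonzero ideal — where Theorem \ref{thm17} closes the argument — or into the $4$-dimensional / exceptional classification, where Theorem \ref{thm2} together with Lemmas \ref{lem3}, \ref{lem6}, \ref{lem7} makes $T(R)^{2}\subseteq Z(R)$ transparent.
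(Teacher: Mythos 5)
Your reduction to a $*$-ideal $I$ with $[I,R]\subseteq L$ and your treatment of the X-inner, $\text{\rm char}\,R=2$ branch (Proposition \ref{pro1} followed by Lemma \ref{lem24}) match the paper. But the proposal has a genuine gap exactly where the difficulty of the theorem lies: the X-outer case. You ``feed the identity into Kharchenko's theorem for $*$-differential identities'' and assert that every branch either lands in the finite-dimensional classification or collapses to $d=0$; none of this is carried out, and it is not a routine application (one must control the relation between $d$ and $d^*$ and actually analyse the resulting multilinear identity). The paper avoids Kharchenko entirely: in $\text{\rm char}\,R\ne 2$ it never splits into inner/outer at all, but computes $0=d([u,t]+[u,t]^*)=2[d(u),t]$ for $u\in L$, $t\in T(L)$ (using $d(u^*)=-d(u)$), so $[T(L),d(L)]=0$, whence $T(L)\subseteq Z(R)$ by a cited result of Lee--Lee, then $[L,T(R)]=0$ and Theorem \ref{thm2} finishes; in $\text{\rm char}\,R=2$ it reduces the outer case to the inner one via Lemma \ref{lem25} (since $T=K_0$ there, $[T(L),T(R)]\subseteq T(L)$ gives $[T(L),d(T(R))]=0$, so each $z\in d(T(R))$ plays the role of $b$ in Lemma \ref{lem24}), and the leftover alternative $d(T(R))\subseteq Z(R)$ is handled by Lemmas \ref{lem1} and \ref{lem14}. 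You would need to supply one of these mechanisms (or a genuine Kharchenko argument) for your outer branch to stand.

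A secondary error: in your $\text{\rm char}\,R\ne 2$ inner branch you invoke Lemma \ref{lem8} to claim that $T([I,I])^2$ is a Lie ideal of $R$. Lemma \ref{lem8} only gives that $T(A)^2$ is a Lie ideal of $A$ when $A$ is a ring (its proof needs $t_2x+x^*t_2\in T(A)$, hence $t_2x\in A$ for $x\in R$); for the Lie ideal $[I,I]$, which is not a subring, the conclusion does not follow, so the subsequent appeal to Lemma \ref{lem2} is unsupported. This branch is in any case unnecessary once you notice the paper's direct $2[d(u),t]=0$ computation, which disposes of all of $\text{\rm char}\,R\ne 2$ without assuming $d$ is inner.
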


\begin{proof}
Case 1:\ $\text{\rm char}\,R\ne 2$.
Let $u\in L$ and $t\in T(L)$. Then $[u, t]\in L$, $[u, t]^*=[t, u^*]$ and $d(t)=0$. Thus
\begin{eqnarray*}
0&=&d([u, t]+[u, t]^*)\\
  &=&d([u, t]+[t, u^*])\\
  &=&[d(u), t]+[t, d(u^*)]\\
  &=&[d(u), t]-[t, d(u)]\\
  &=&2[d(u), t].
\end{eqnarray*}
That is,
$
\big[T(L), d(L)\big]=0.
$
In view of \cite[Theorem 2]{lee1983}, $T(L)\subseteq Z(R)$.
Let $u\in L$ and $r\in R$. Then $u+u^*\in Z(R)$ and
$$
[u, r]+[u, r]^*=[u, r]+[r^*, u^*]=[u, r]-[r^*, u]=[u, r+r^*]\in Z(R),
$$
implying $[L, T(R)]=0$.
Since $L$ is noncentral, it follows from Corollary \ref{cor5} that $T(R)^2\subseteq Z(R)$.

Case 2:\ $\text{\rm char}\,R=2$.
Let $J:=R[L, L]R$ and $I:=J\cap J^*$. Then $I$ is a nonzero $*$-ideal of $R$ such that $[I, R]\subseteq L$ (see Lemma \ref{lem5} (ii)). Since $R$ is noncommutative, $[I, R]$ is a non-abelian Lie ideal of $R$.
By the fact that $d(T(L))=0$, we get $d(T([I, R]))=0$.

Suppose first that $d$ is X-inner. There exists $b\in Q_s(R)$ such that $d(x)=[b, x]$ for all $x\in R$. Hence
\begin{eqnarray}
\big[b, [x, r]+[x, r]^*\big]=0
\label{eq:11}
\end{eqnarray}
for all $x\in I$ and $r\in R$. In view of Proposition \ref{pro1}, Eq.\eqref{eq:11} holds for all $x, r\in Q_s(R)$. By Lemma \ref{lem24}, $T(Q_s(R))^2\subseteq C$ and so
$T(R)^2\subseteq Z(R)$, as desired.

We next consider the general case. In view of Lemma \ref{lem25},
$[T(L),T(R)] \subseteq T(L)$, which implies $[T(L), d(T(R))]=0$. Applying the X-inner case above, either $T(R)^2\subseteq Z(R)$ or $d(T(R))\subseteq Z(R)$. For the latter case,
  assume that  $d(T(R))\subseteq Z(R)$.
It follows from Lemma \ref{lem1} that $\dim_CRC=4$ as $R$ is noncommutative.
In view of Lemma \ref{lem14}, $*$ is of the first kind.

Up to now, we have proved that $R$ is exceptional and $*$ is of the first kind.
It follows from Theorem \ref{thm2} that $T(R)^2\subseteq Z(R)$.
\end{proof}

\section{Lie ideals: Theorem E}
Let $R$ be a prime ring with involution $*$, and $d$ a derivation of $R$. We let $d^*$ be the derivation of $R$ defined by
$
d^*(x)=d(x^*)^*
$
for $x\in R$. We say that $d$ is a {\it $*$-derivation} if $d=d^*$ and is a {\it skew $*$-derivation} if $d=-d^*$. Note that if $b\in S(R)$ then the inner derivation defined by $b$ is a skew $*$-derivation. Similarly, if $b\in K(R)$, then the inner derivation defined by $b$ is a $*$-derivation.

\begin{lem} \label{lem28}
Let $R$ be a prime ring with involution $*$, $\text{\rm char}\,R\ne 2$, and $b\in R\setminus Z(R)$. Suppose that
$
\big[b, [x, y]-[x, y]^*\big]=0
$
for all $x, y\in R$.
Then $K_0(R)^2\subseteq Z(R)$.
\end{lem}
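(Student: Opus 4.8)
The plan is to transplant the proof of Lemma~\ref{lem24} from the trace, characteristic~$2$ setting to the skew-trace, characteristic~$\ne 2$ setting. Write $w(x,y):=[x,y]-[x,y]^{*}$; then $w(x,y)\in K_0(R)$ and $w(x,y)^{*}=-w(x,y)$. Applying $*$ to the hypothesis and using this, one gets $[b^{*},w(x,y)]=0$ for all $x,y\in R$, and hence $[\,b-b^{*},\,w(x,y)\,]=0$ and $[\,b+b^{*},\,w(x,y)\,]=0$ as well. Since $\text{\rm char}\,R\ne 2$ and $b\notin Z(R)$, the skew element $b-b^{*}$ and the symmetric element $b+b^{*}$ cannot both be central, so after replacing $b$ by a noncentral one of the two we may assume that either $b\in K(R)\setminus Z(R)$ or $b\in S(R)\setminus Z(R)$.

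The first active step is to specialize the hypothesis. Putting $x=b$ and simplifying $[b,y]^{*}$ by means of $b^{*}=\pm b$, one finds $w(b,y)=[b,\,y-y^{*}]$ when $b\in K(R)$ and $w(b,y)=[b,\,y+y^{*}]$ when $b\in S(R)$; consequently $[\,b,[b,k]\,]=0$ for all $k\in K_0(R)$ in the first case, and $[\,b,[b,t]\,]=0$ for all $t\in T(R)$ in the second. Feeding pairs of symmetric, resp.\ skew, elements into the hypothesis and using $\text{\rm char}\,R\ne 2$ also gives $[\,b,[S(R),S(R)]\,]=0$ and $[\,b,[K(R),K(R)]\,]=0$. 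The decisive step, playing the role of ``$b^{2}\in Z(R)$'' in Lemma~\ref{lem24}, is to promote these partial relations to a second-order identity valid on all of $[R,R]$: namely $[\,b,[b,c]\,]=0$ for every $c\in[R,R]$ in the case $b\in S(R)$, and the anticommutator relation $b[b,c]+[b,c]b=0$ for every $c\in[R,R]$ in the case $b\in K(R)$ --- the change from a commutator to an anticommutator being exactly what distinguishes the skew case from the trace case. I would obtain this by replacing $x$ by $bx$ in the hypothesis, expanding, collapsing terms with the help of $[b,w(x,y)]=0$ and the relations already established, then replacing $y$ by $by$ and comparing, following the chain of manipulations that runs from Eq.~\eqref{eq:7} through Eq.~\eqref{eq:9} in the proof of Lemma~\ref{lem24}, but now carrying the skew signs and the factor~$2$ along. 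As there, the only alternative one is left with is $[b,x][b,y]=0$ for all $x,y\in R$, which by primeness of $R$ forces $b\in Z(R)$, a contradiction. I expect this computation to be the main obstacle of the proof.

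With that identity available, one finishes as follows. Since $R$ is noncommutative with $\text{\rm char}\,R\ne 2$, $[R,R]$ is a non-abelian Lie ideal of $R$ --- an abelian Lie ideal of a noncommutative prime ring of characteristic $\ne2$ is central by Lemma~\ref{lem5}(iv), forcing commutativity --- so $[[R,R],[R,R]]\ne 0$, and Theorem~\ref{thm23}, applied with the Lie ideal $[R,R]$ and $a:=b$, shows that $[R,R]+b[R,R]$ contains a nonzero ideal. Intersecting this ideal with its image under $*$ yields a nonzero $*$-ideal $I$ every element $z$ of which can be written $z=c+bc'$ with $c,c'\in[R,R]$. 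Because $c-c^{*}$ and $c'-c'^{*}$ belong to $K_0([R,R])$, one has $[b,c-c^{*}]=0$ and $[b,c']=[b,c'^{*}]$; computing $[b,\,z-z^{*}]$ and using $b^{*}=\pm b$, its surviving terms reduce to $b[b,c']\mp[b,c']b$, which vanishes by the identity of the previous paragraph. Thus $[b,K_0(I)]=0$. Since the inner derivation induced by $b$ is nonzero and annihilates $K_0(I)$, Theorem~\ref{thm17} gives $K_0(R)^{2}\subseteq Z(R)$, as desired.
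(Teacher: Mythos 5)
Your reduction to the case $b\in S(R)\setminus Z(R)$ or $b\in K(R)\setminus Z(R)$ is sound, the symmetric half does work (there $[b,[b,T(R)]]=0$ combined with the hypothesis gives $[b,[b,[R,R]]]=0$, whence $b\in Z(R)$ by \cite[Theorem 1]{bergen1981}, a contradiction), and your endgame via Theorem~\ref{thm23} and Theorem~\ref{thm17} would be fine \emph{if} the ``decisive identity'' were available. But that identity is exactly where the proof is missing, and you concede as much (``I expect this computation to be the main obstacle''). In the skew case your target $b[b,c]+[b,c]b=0$ for all $c\in[R,R]$ is equivalent to $[b^2,[R,R]]=0$, i.e.\ to $b^2\in C$, and nothing in your sketch produces it. The analogy with Lemma~\ref{lem24} does not transplant: in characteristic $2$ one has $[b,[b,z]]=[b^2,z]$, so the specialization $x=b$ there yields $[b^2,T(R)]=0$ and Theorem~\ref{thm2} delivers the dichotomy ``$T(R)^2\subseteq Z(R)$ or $b^2\in Z(R)$''; moreover the chain of manipulations from Eq.\eqref{eq:7} to Eq.\eqref{eq:9} is carried out only \emph{after} $b^2\in Z(R)$ is in hand (it is used repeatedly to move $b$ past commutators $[b,\cdot]$) and its purpose is to refute $b^*=b+\beta$ with $\beta\ne0$, not to create the second-order identity. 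In characteristic $\ne 2$ the specialization $x=b$ with $b$ skew gives only $[b,[b,k]]=0$ for $k\in K_0(R)$, which says $b^2k+kb^2=2bkb$ rather than $[b^2,k]=0$, so neither Theorem~\ref{thm4} nor the Eq.\eqref{eq:7}--\eqref{eq:9} mechanism applies; if you expand $[b,[bx,y]-[bx,y]^*]=0$ you are left with a relation of the shape $b[b,[x,y]]+[b,[x,y]]b+[b,y][b,x]-[b,x^*][b,y^*]+[b,[b,y]]x-x^*[b,[b,y]]=0$, from which the anticommutator identity does not drop out.

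The paper closes precisely this gap by an external input you have no substitute for: from $[b,[b,y-y^*]]=0$ with $b$ skew it invokes \cite[Theorem 2.6]{lin1986} to conclude $\dim_C RC=4$, then rules out the symplectic type (using $T(R)\subseteq Z(R)$ together with the hypothesis to force $[b,[R,R]]=0$) and concludes $K_0(R)^2\subseteq Z(R)$ from Theorem~\ref{thm4}. (It also first checks that $*$ is of the first kind, which is what lets it pass from ``$b-b^*\in Z(R)$'' to ``$b=b^*$'' in the general case.) As it stands, your proposal is a plausible plan whose central computation is neither carried out nor, on inspection, likely to succeed by the stated analogy.
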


\begin{proof}
By hypothesis,
\begin{eqnarray}
\big[b, [x, y]-[x, y]^*\big]=0
\label{eq:12}
\end{eqnarray}
for all $x, y\in R$.
We claim that $*$ is of the first kind. Otherwise, $\beta\ne \beta^*$ for some $\beta\in C$. Choose a nonzero $*$-ideal $I$ of $R$ such that $\beta \subseteq R$. Let $x, y\in I$. Then $\beta^*\big[b, [x, y]-[x, y]^*\big]=0$ and
$$
\big[b, [\beta x, y]-[\beta x, y]^*\big]=0,
$$
implying that $(\beta^*-\beta)[b, [x, y]]=0$. That is, $\big[b, [I, I]\big]=0$ and so $b\in Z(R)$, a contradiction.

Suppose first that $b^*=-b$.  Then the inner derivation of $R$ induced by $b$ is a $*$-derivation.
Replacing $x$ by $b$ in Eq.\eqref{eq:12},
$$
0=\big[b, [b, y]-[b, y]^*\big]=\big[b, [b, y]-[y^*, b^*]\big]=\big[b, [b, y-y^*]\big]
$$
for all $y\in R$. In view of \cite[Theorem 2.6]{lin1986}, it follows that $\dim_CRC=4$.

Suppose that $*$ is of the symplectic type. Then $T(R)\subseteq Z(R)$. Let $x, y\in R$. Then $[x, y]+[x, y]^*\in Z(R)$. Together with Eq.\eqref{eq:12}, we get
$
\big[b, 2[x, y]\big]=0.
$
Since $\text{\rm char}\,R\ne 2$, $[b, [R, R]]=0$ and so $b\in Z(R)$, a contradiction. Thus $*$ is of the transpose type.
In view of Theorem \ref{thm4}, $K_0(R)^2\subseteq Z(R)$, as desired.

For the general case, it follows from Eq.\eqref{eq:12} that
$$
\big[b^*-b, [x, y]-[x, y]^*\big]=0
$$
for all $x, y\in R$. Suppose on the contrary that $K_0(R)^2\nsubseteq Z(R)$. By the skew case proved above, $b^*-b\in Z(R)$. Since $*$ is of the first kind, we have $b=b^*$.

Replacing $x$ by $b$ in Eq.\eqref{eq:12}, we get
$$
0=\big[b, [b, y]-[b, y]^*\big]=\big[b, [b, y+y^*]\big]
$$
for all $y\in R$. In particular, $\big[b, [b, [x, y]+[x, y]^*]\big]=0$ for all $x, y\in R$. Together with Eq.\eqref{eq:12}, we get
$[b, [b, [x, y]]=0$ for all $x, y\in R$. In view of \cite[Theorem 1]{bergen1981}, it follows that $b\in Z(R)$, a contradiction.
Hence $K_0(R)^2\subseteq Z(R)$.
\end{proof}

Finally, we prove the following theorem, i.e., Theorem E.\vskip6pt

\begin{thm}\label{thm32}
Let $R$ be a prime ring with involution $*$, $L$ a non-abelian Lie ideal of $R$, and $d$ a nonzero derivation of $R$.
If $d(K_0(L))=0$, then $K_0(R)^2\subseteq Z(R)$.
\end{thm}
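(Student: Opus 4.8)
The proof will mirror that of Theorem~\ref{thm20}, with Lemma~\ref{lem28} taking the place of Lemma~\ref{lem24} and Lemma~\ref{lem25} supplying the commutator formula for skew traces. First, if $\text{\rm char}\,R=2$ then $K_0(A)=T(A)$ for every additive subgroup $A$ of $R$, so the hypothesis reads $d(T(L))=0$ and Theorem~\ref{thm20} gives $K_0(R)^2=T(R)^2\subseteq Z(R)$. So assume $\text{\rm char}\,R\ne2$; since $L$ is non-abelian, $R$ is noncommutative. Put $J:=R[L,L]R$ and $I:=J\cap J^*$; then $I$ is a nonzero $*$-ideal of $R$, $[I,R]\subseteq L$ by Lemma~\ref{lem5}(ii), and $[I,R]$ is a non-abelian Lie ideal of $R$ with $d(K_0([I,R]))=0$. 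For $x\in I$ and $r\in R$ we have $[x,r]\in[I,R]\subseteq L$, hence $[x,r]-[x,r]^*\in K_0([I,R])$ and so $d\big([x,r]-[x,r]^*\big)=0$.

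Suppose first that $d$ is X-inner, say $d=[b,\,\cdot\,]$ with $b\in Q_s(R)$; necessarily $b\notin C$, else $d=0$. Then $\big[b,[x,r]-[x,r]^*\big]=0$ for all $x\in I$, $r\in R$; by Proposition~\ref{pro1} this identity holds for all $x,r\in Q_s(R)$, and since $\text{\rm char}\,Q_s(R)\ne2$ and $b\in Q_s(R)\setminus C$, Lemma~\ref{lem28} applied to $Q_s(R)$ yields $K_0(Q_s(R))^2\subseteq C$, whence $K_0(R)^2\subseteq Z(R)$. For general $d$, Lemma~\ref{lem25} gives $[K_0(L),K_0(R)]\subseteq K_0(L)$, so for $w\in K_0(L)$, $k\in K_0(R)$ we get $0=d([w,k])=[w,d(k)]$ (using $d(w)=0$); thus $[K_0(L),d(K_0(R))]=0$. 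If $d(K_0(R))\nsubseteq Z(R)$, pick $k_0\in K_0(R)$ with $b:=d(k_0)\in R\setminus Z(R)$ (so $b\notin C$); then $[b,K_0([I,R])]=0$, hence $\big[b,[x,r]-[x,r]^*\big]=0$ on $I\times R$, and the X-inner argument above, applied to the inner derivation $[b,\,\cdot\,]$, gives $K_0(R)^2\subseteq Z(R)$.

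It remains to treat the case $d(K_0(R))\subseteq Z(R)$. If $d(K_0(R))=0$, Corollary~\ref{cor6} finishes. Otherwise Lemma~\ref{lem1}(i) gives $\dim_CRC=4$, so $RC$ is a $4$-dimensional central simple $C$-algebra, to which $d$ and $*$ extend, and one determines the type of $*$ by a direct computation using the explicit descriptions in Lemmas~\ref{lem7}, \ref{lem3} and~\ref{lem6}. The symplectic type is impossible: there $K_0(R)=[R,R]$, so $d([R,R])\subseteq Z(R)$, which forces $d$ to vanish on the nonzero Lie ideal $[[R,R],[R,R]]$, hence $d=0$ by Lemma~\ref{lem2}, a contradiction. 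The second kind must also be excluded: second kind together with $\dim_CRC=4$ gives $K_0(R)C=RC$, so $K_0(R)^2\not\subseteq Z(R)$, and one therefore has to derive a contradiction from the hypothesis $d(K_0(L))=0$ (via $[I,R]\subseteq L$). This leaves $*$ of the first kind and of the transpose type, and then $K_0(R)^2\subseteq Z(R)$ by Theorem~\ref{thm4} (the implication (v)$\Rightarrow$(iv)), or trivially if $K_0(R)\subseteq Z(R)$.

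The main obstacle is precisely this residual case $0\ne d(K_0(R))\subseteq Z(R)$: in the char~$2$ trace setting of Theorem~\ref{thm20} it was dispatched using the char~$2$ lemma (Lemma~\ref{lem14}) to force $*$ of the first kind, but there is no ready-made char~$\ne2$ skew counterpart, so the type of $*$ must be pinned down by hand inside the $4$-dimensional algebra $RC$; the second-kind exclusion in particular seems to require extracting a fair amount from $d(K_0(L))=0$ on the non-abelian Lie ideal $L$, and for $\text{\rm char}\,R=3$ one additionally uses that nilpotents of $RC$ have index $\le2$. A minor technical point is the use of Proposition~\ref{pro1} to pass from the identity $\big[b,[x,r]-[x,r]^*\big]=0$ on $I\times R$ to the same identity on $Q_s(R)$.
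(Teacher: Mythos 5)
Your overall architecture matches the paper's: the characteristic~$2$ reduction to Theorem~\ref{thm20}, the use of Lemma~\ref{lem25} to obtain $[K_0(L),d(K_0(R))]=0$, and the passage through Proposition~\ref{pro1} and Lemma~\ref{lem28} to dispose of the case $d(K_0(R))\nsubseteq Z(R)$ are exactly the paper's steps. But the residual case $d(K_0(R))\subseteq Z(R)$ is not an incidental ``obstacle'' to be flagged and deferred --- it is where the substance of the proof lies, and your proposal leaves it open. Two things are missing. First, you never prove $d(C)=0$, which is what allows Skolem--Noether to convert $d$ into an inner derivation $[b,\cdot]$ of $RC$; the paper establishes this by taking $\beta=\beta^*\in C$ and a $*$-ideal $J$ with $\beta J\subseteq I$, deriving $d(\beta)([x,r]-[x,r]^*)=0$ on $I\times J$, ruling out the alternative $[x,r]=[x,r]^*$ via $[x,[x,J]]=0$ and \cite[Theorem 1]{bergen1981}, and then invoking Lemma~\ref{lem26}. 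Second --- and as you yourself concede --- you do not exclude the second kind. The paper does this only after $d=[b,\cdot]$ is in hand: expanding $\big[b,[x,\beta r]-[x,\beta r]^*\big]=0$ gives $(\beta^*-\beta)\big[b,[I,J]\big]=0$, whence $\beta=\beta^*$ because $b\notin Z(R)$. Without these two steps the conclusion is simply not reached in the residual case, so the proposal has a genuine gap.

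Some secondary points. Your exclusion of the symplectic type misapplies Lemma~\ref{lem2}: from $d(K_0(R))\subseteq Z(R)$ you get $d$ vanishing on the Lie ideal $[K_0(R),K_0(R)]$, and Lemma~\ref{lem2} then says that this Lie ideal is \emph{central}, not that $d=0$; one must still extract a contradiction from that centrality (the paper instead argues directly: symplectic means $[x,r]+[x,r]^*\in Z(R)$, which combined with $\big[b,[x,r]-[x,r]^*\big]=0$ gives $\big[b,2[x,r]\big]=0$ and hence $b\in Z(R)$). Note also that the symplectic/transpose dichotomy of Definition~\ref{def4} presupposes that $*$ is of the first kind, so the first-kind determination must precede the type determination, not follow it. Finally, the remark about $\text{\rm char}\,R=3$ and nilpotents of index $\le 2$ has no counterpart in the actual argument and is not needed.
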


\begin{proof}
Case 1:\ $\text{\rm char}\,R=2$. Then $K_0(L)=T(L)$. It follows from Theorem \ref{thm20} that $K_0(R)^2=T(R)^2\subseteq Z(R)$, as desired.

Case 2:\ $\text{\rm char}\,R\ne 2$. In view of Lemma \ref{lem25},  $[K_0(L), K_0(R)] \subseteq K_0(L)$.
Since $d(K_0(L))=0$, we get
$[K_0(L), d(K_0(R))]=0$. As $L$ is a non-abelian Lie ideal of $R$, there exists a nonzero $*$-ideal $I$ of $R$ such that
$[I, R]\subseteq L$ (see Lemma \ref{lem5} (ii)). Let $z\in d(K_0(R))$. Then
\begin{eqnarray}
\big[z, [x, y]-[x, y]^*\big]=0
\label{eq:13}
\end{eqnarray}
for all $x\in I$ and $y\in R$. In view of Proposition \ref{pro1}, Eq.\eqref{eq:13} holds for all $x, y\in R$.
By Lemma \ref{lem28}, either $K_0(R)^2\subseteq Z(R)$ or $z\in Z(R)$ for all $z\in d(K_0(R))$. Assume the latter case.
That is, $d(K_0(R))\subseteq Z(R)$.
In view of Lemma \ref{lem1} (i), it follows that $\dim_CRC=4$.

Step 1:\ $d(C)=0$. Let $\beta=\beta^*\in C$. Choose a nonzero $*$-ideal $J$ of $R$ such that $\beta J\subseteq I$ and $J\subseteq I$.
Let $x\in I$ and $r\in J$. Then $d([x, r]-[x, r]^*)=0$ and
$$
0=d([x, \beta r]-[x, \beta r]^*)=d(\beta([x, r]-[x, r]^*))=d(\beta)([x, r]-[x, r]^*).
$$
Suppose on the contrary that $d(\beta)\ne 0$. Then $[x, r]=[x, r]^*$ for all $x\in I$ and $r\in J$.
Since $xr\in J$, we get
$$
[x, xr]=[x, xr]^*=(x[x, r])^*=[x, r]^*x^*=[x, r]x^*.
$$
Hence
$[x, [x, J]]=0$ for all $x=x^*\in I$. In view of \cite[Theorem 1]{bergen1981}, $x\in Z(R)$ for all $x=x^*\in I$.
It is easy to prove that $R$ is commutative, a contradiction.

Up to now, we have proved that if $\beta=\beta^*\in C$, then $d(\beta)=0$.
In view of Lemma \ref{lem26},
we get $d(C)=0$.
It follows from the Skolem-Noether theorem that $d$ is X-inner, that is, there exists $b\in Q_s(R)$ such that $d(x)=[b, x]$ for all $x\in R$.

Step 2:\ The involution $*$ is of the first kind. Let $\beta\in C$. The aim is to prove that $\beta^*=\beta$. Choose a nonzero ideal $J$ of $R$ such that $\beta J\subseteq R$.
Then, for $x\in I$ and $r\in J$, we have
$
\big[b, [x, r]-[x, r]^*\big]=0
$
and
$$
\big[b, [x, \beta r]-[x, \beta r]^*\big]=\beta^*\big[b, [x, r]-[x, r]^*\big]-(\beta^*-\beta)\big[b, [x, r]\big]=0.
$$
Therefore $(\beta^*-\beta)\big[b, [x, r]\big]=0$. That is, $(\beta^*-\beta)\big[b, [I, J]\big]=0$.
By Lemma \ref{lem2} and $b\notin Z(R)$, it follows that $\beta^*=\beta$, as desired.

Up to now, we have proved that $\dim_CRC=4$ and $*$ is of the first kind.

Suppose that $*$ is of the symplectic type, that is, $T(R)\subseteq Z(R)$.
Let $x\in I$ and $r\in R$. Then
$\big[b, [x, r]-[x, r]^*\big]=0$ and $[x, r]+[x, r]^*\in Z(R)$, implying that $\big[b, 2[x, r]\big]=0$.
That is, $[b, [I, R]]=0$ and so $b\in Z(R)$, a contradiction.

This proves that $*$ is of the transpose type. It follows from Theorem \ref{thm4} that $K_0(R)^2\subseteq Z(R)$, as desired.
\end{proof}


\begin{thebibliography}{99}
\bibitem{amitsur1968} S. A. Amitsur, {\it Rings with involution}, Israel J. Math. {\bf 6} (1968), 99--106.

\bibitem{amitsur1969}  S. A. Amitsur, {\it Identities in rings with involutions}, Israel J. Math. {\bf 7}(1) (1969), 63--68.

\bibitem{balogh2012} Z. Balogh, {\it Lie derived length and involutions in group algebras}, J. Pure Appl. Algebra {\bf 216} (2012), 1282--1287.

\bibitem{balogh2008} Z. Balogh and T. Juh\'{a}sz, {\it Derived lengths of symmetric and skew symmetric elements in group algebras}, JP J. Algebra Number T. {\bf 12}(2) (2008), 191--203.

\bibitem{baxter1968} W. E. Baxter and W. S. Martindale III, {\it Rings with involution and polynomial identities}, Canad. J. Math. {\bf 20} (1968), 465--473.

\bibitem{beidar1998} K. I. Beidar and W. S. Martindale III, {\it On functional identities in prime rings with involution}, J. Algebra {\bf 203} (1998), 491--532.

\bibitem{beidar1996} K. I. Beidar, W. S. Martindale III and A. V. Mikhalev,
``Rings with Generalized Identities", Monographs and Textbooks in
Pure and Applied Mathematics, {\bf 196}. Marcel Dekker, Inc., New
York, 1996.

\bibitem{bergen1981} J. Bergen, I. N. Herstein and J. W. Kerr, {\it Lie ideals and derivations of prime rings}, J. Algebra {\bf 71}(1) (1981), 259--267.

\bibitem{bien2019} M. H. Bien, {\it A note on local commutators in division rings with involution}, Bull. Korean Math. Soc. {\bf 56}(3) (2019), 659--666.

\bibitem{bien2022} M. H. Bien, B. X. Hai and D. T. Hue, {\it On the unit groups of rings with involution}, Acta Math. Hungar. {\bf 166}(2) (2022), 432--452.

\bibitem{catino2014} F. Catino, G. T. Lee and E. Spinelli, {\it Group algebras whose symmetric elements are Lie metabelian}, Forum Math. {\bf 26}(5) (2014), 1459--1471.

\bibitem{chacron2016}  M. Chacron, {\it Commuting involution}, Comm. Algebra {\bf 44}(9) (2016),  3951--3965.

\bibitem{chacron2022} M. Chacron, {\it Certain invariant multiplicative subset of a simple Artinian ring with involution}, JP J. Algebra Number T. {\bf 53}(2) (2022), 151--163.

\bibitem{chacron2023} M. Chacron, {\it Certain invariant multiplicative subset of a simple Artinian ring with involution. II}, Acta Math. Hungar. {\bf 170}(2) (2023), 437--454.

\bibitem{chacron2024} M. Chacron, {\it On invariant submonoids}, Comm. Algebra, published online in 2024,
https://doi.org/10.1080/00927872.2024.2400198.

\bibitem{chacron1975} M. Chacron and I. N. Herstein, {\it Powers of skew and symmetric elements in division rings}, Houston J. Math. {\bf 1}(1) (1975), 15--27.

\bibitem{chuang1987}  C.-L. Chuang, {\it On invariant additive subgroups},
Israel J. Math.   {\bf 57}(1) (1987), 116--128.

\bibitem{chuang1989}  C.-L. Chuang, {\it $*$-differential identities of prime rings with involution}, Trans. Amer. Math. Soc. {\bf 316}(1) (1989), 251--279.

\bibitem{ferreira2015} V. O. Ferreira and J. Z. Gon\c{c}alves, {\it Free symmetric and unitary pairs in division rings infinite-dimensional over their centers}, Israel J. Math. {\bf 210} (2015), 297--321.

\bibitem{goodaire2013} E. G. Goodaire and C. P. Milies, {\it Involutions and anticommutativity in group rings}, Canad. Math. Bull. {\bf 56}(2) (2013), 344--353.

\bibitem{herstein1967} I. N. Herstein, {\it Special simple rings with involution}, J. Algebra {\bf 6}(3) (1967), 369--375.

\bibitem{herstein1969} I. N. Herstein, ``Topics in ring theory'', The University of Chicago Press, Chicago, Ill.-London {\bf 1969} xi+132 pp.

\bibitem{herstein1971} I. N. Herstein and S. Montgomery, {\it A note on division rings with involutions}, Michigan Math. J. {\bf 18} (1971), 75--79.

\bibitem{herstein1974} I. N. Herstein, {\it Rings with periodic symmetric or skew elements}, J. Algebra {\bf 30} (1974), 144--154.

\bibitem{herstein1976} I. N. Herstein, ``Rings with involution", Chicago Lectures in Mathematics. University of Chicago Press, Chicago, Ill.-London, {\bf 1976}. x+247 pp.

\bibitem{herstein1983} I. N. Herstein, {\it A theorem on invariant subrings}, J. Algebra {\bf 83}(1) (1983), 26--32.

\bibitem{lanski1972}  C. Lanski and S. Montgomery, {\it Lie structure of prime rings of characteristic $2$}, Pacific J. Math. {\bf 42}(1) (1972), 117--136.

\bibitem{lee1983} P.-H. Lee and T.-K. Lee,  {\it Lie ideals of prime rings with derivations},
Bull. Inst. Math. Acad. Sinica {\bf 11}(1)  (1983), 75--80.

\bibitem{lee1986} P.-H. Lee and T.-K. Lee,  {\it  Note on nilpotent derivations},
Proc. Amer. Math. Soc. {\bf 98}(1) (1986), 31--32.

\bibitem{lee1995} T.-K. Lee,  {\it On nilpotent derivations of
semiprime rings with involution}, Chinese J. Math. {\bf 23} (1995)
155--166.

\bibitem{lee2017} T.-K. Lee, {\it Anti-automorphisms satisfying an Engel condition}, Comm. Algebra {\bf 45}(9) (2017), 4030--4036.

\bibitem{lee2018} T.-K. Lee,  {\it Commuting anti-homomorphisms}, Comm. Algebra {\bf 46}(3) (2018), 1060--1065.

\bibitem{lee2022} T.-K. Lee,  {\it On higher commutators of rings},  J. Algebra Appl. {\bf 21}(6) (2022), Paper No. 2250118, 6 pp.

\bibitem{lee2025} T.-K. Lee and J.-H. Lin, {\it Commutators and products of Lie ideals of prime rings},  arXiv:2410.00444v2 [math.RA],
https://doi.org/10.48550/arXiv.2410.00444.

\bibitem{lee2010} G. T. Lee, ``Group identities on units and symmetric units of group rings", Algebra and Applications, {\bf 12}. Springer–Verlag London, Ltd., London, 2010. xii+194 pp.

\bibitem{lee2015} G. T. Lee, S. K. Sehgal and E. Spinelli, {\it Lie identities on skew elements in group algebras}, Contemp. Math. {\bf 652} (2015), 103--121.

\bibitem{lim1979} T. P. Lim, {\it Some classes of rings with involution satisfying the standard polynomial of degree $4$}, Pacific J. Math.
{\bf 85}(1) (1979), 125--130.

\bibitem{lin1986} J.-S. Lin, {\it On derivations of prime rings with involution},
Chinese J. Math. {\bf 14}(1) (1986), 37--51.

\bibitem{lin2010} J.-S. Lin and C.-K. Liu, {\it Strong commutativity preserving maps in prime rings with involution}, Linear Algebra Appl. {\bf 432}(1) (2010), 14--23.

\bibitem{martindale1969} W. S. Martindale III, {\it Rings with involution and polynomial identities}, J. Algebra {\bf 11}(2) (1969), 186--194.

\bibitem{montgomery1971a} S. Montgomery, {\it Polynomial identity algebras with involution}, Proc. Amer. Math. Soc. {\bf 27}(1) (1971), 53--56.

\bibitem{montgomery1971b} S. Montgomery, {\it A generalization of a theorem of Jacobson}, Proc. Amer. Math. Soc. {\bf 28}(2) (1971), 366--370.

\bibitem{montgomery1973} S. Montgomery, {\it A generalization of a theorem of Jacobson II}, Pacific J. Math. {\bf 44}(1) (1973), 233--240.

\bibitem{montgomery1976} S. Montgomery, {\it A structure theorem and a positive-definiteness condition in rings with involution},
J. Algebra {\bf 43}(1) (1976), 181--192.

\bibitem{mosic2009} D. Mosi\'{c} and D. S. Djordjevi\'{c}, {\it Moore–Penrose-invertible normal and Hermitian elements in rings}, Linear Algebra Appl. {\bf 431}(5--7) (2009), 732--745.

\bibitem{osborn1967} J. M. Osborn, {\it Jordan algebras of capacity two}, Proc. Natl. Acad. Sci. U.S.A. {\bf 57}(3) (1967), 582--588.

\bibitem{posner1957} E. C. Posner, {\it Derivations in prime rings}, Proc. Amer. Math. Soc. {\bf 8}(6) (1957), 1093--1100.

\bibitem{rowen1973} L. Rowen, {\it Some results on the center of a ring with polynomial identity}, Bull. Amer. Math. Soc. {\bf 79}(1) (1973), 219--223.

\bibitem{shalev1992} A. Shalev, {\it The derived length of Lie soluble group rings I}, J. Pure Appl. Algebra {\bf 78}(3) (1992), 291--300.

\bibitem{siciliano2011} S. Siciliano, {\it On the Lie algebra of skew-symmetric elements of an enveloping algebra}, J. Pure Appl. Algebra {\bf 215}(1) (2011), 72--76.

\bibitem{thu2022} V. H. M. Thu, {\it A note on symmetric elements of division rings with involution}, Acta Math. Vietnam. {\bf 47} (2022), 495--502.

\end{thebibliography}
\end{document}